\documentclass[12pt]{article}
\usepackage[latin1]{inputenc}
\usepackage[british]{babel}
\usepackage{lmodern}
\usepackage[T1]{fontenc}
\usepackage[paper=a4paper, left=28mm, right=25mm, top=29mm, bottom=25mm]{geometry}
\usepackage{latexsym,amsfonts,amsmath,graphics}
\usepackage{epsfig}
\usepackage{enumitem}
\usepackage{amssymb,mathrsfs}
\usepackage{verbatim}
\newtheorem{theorem}{Theorem}
\newtheorem{lemma}{Lemma}
\newtheorem{corollary}{Corollary}
\newtheorem{example}{Example}
\newtheorem{proposition}{Proposition}

\newtheorem{remark}{Remark}

\newenvironment{proof}{\begin{trivlist} \item[\hskip\labelsep{\it Proof.}]}{$\hfill\Box$\end{trivlist}}

\newcommand{\gr}{\gtrsim}
\newcommand{\lr}{\lesssim}

\newcommand{\rd}{\,\mathrm{d}}
\newcommand{\bsj}{\boldsymbol{j}}
\newcommand{\bsx}{\boldsymbol{x}}

\newcommand{\bsz}{\boldsymbol{z}}

\newcommand{\bsm}{\boldsymbol{m}}
\newcommand{\bst}{\boldsymbol{t}}
\newcommand{\bsa}{\boldsymbol{a}}
\newcommand{\bsc}{\boldsymbol{c}}

\newcommand{\bszero}{\boldsymbol{0}}
\newcommand{\bsone}{\boldsymbol{1}}

\newcommand{\RR}{\mathbb{R}}

\newcommand{\NN}{\mathbb{N}}

\newcommand{\DD}{\mathbb{D}}

\newcommand{\ZZ}{\mathbb{Z}}

\newcommand{\sym}{{\rm sym}}
\newcommand{\cP}{\mathcal{P}}

\newcommand{\vecs}{\boldsymbol{\sigma}}

\allowdisplaybreaks

\title{Finding exact formulas for the $L_2$ discrepancy \\ of digital $(0,n,2)$-nets via Haar functions}
\author{Ralph Kritzinger \thanks{The author is supported by the Austrian Science Fund (FWF): Project F5509-N26, which is a part of the Special Research Program "Quasi-Monte Carlo Methods: Theory and Applications". The support of the Erwin Schr\"{o}dinger International Institute for Mathematics and Physics (ESI) under
the thematic programme "Tractability of High Dimensional Problems and Discrepancy" is gratefully acknowledged.
}}
\date{}

\begin{document}

\maketitle

\begin{abstract}
We use the Haar function system in order to study the $L_2$ discrepancy of a class of digital $(0,n,2)$-nets.
Our approach yields exact formulas for this quantity, which measures the irregularities of distribution of a set of
points in the unit interval. We will obtain such formulas not only for the classical digital nets, but also for shifted and symmetrized versions thereof.
The basic idea of our proofs is to calculate all Haar coefficents of the discrepancy function exactly and insert them
into Parseval's identity. We will also discuss reasons why certain (symmetrized) digital nets fail to achieve the optimal order
of $L_2$ discrepancy and use the Littlewood-Paley inequality in order to obtain results on the $L_p$ discrepancy for all $p\in (1,\infty)$.
\end{abstract} 

\centerline{\begin{minipage}[hc]{130mm}{
{\em Keywords:} $L_2$ discrepancy, digital nets, Haar functions\\
{\em MSC 2000:} 11K06, 11K38}
\end{minipage}}

 \allowdisplaybreaks

\section{Introduction and main results}

In this paper, we study the $L_p$ discrepancy of special digital $(0,n,2)$-nets with a main focus on a precise
computation of the $L_2$ discrepancy. We will introduce all the relevant notation in the following. \\
Discrepancy theory treats the irregularities of point distributions; often in the $d$-dimen-sional unit cube $[0,1)^d$ (see e.g.~\cite{kuinie}).
We study point sets $\cP$ with $N$ elements in the unit square $[0,1)^2$. We define
the discrepancy function of such a point set by
$$ \Delta(\bst,\cP)=\frac{1}{N}\sum_{\bsz\in\cP}\bsone_{[\bszero,\bst)}(\bsz)-t_1t_2, $$
where for $\bst=(t_1,t_2)\in [0,1]^2$ we set $[\bszero,\bst)=[0,t_1)\times [0,t_2)$ with volume $t_1t_2$
and denote by $\bsone_{[\bszero,\bst)}$ the indicator function of this interval. The $L_p$ discrepancy for $p\in [1,\infty)$ of $\cP$ is given by
$$ L_{p}(\cP):=\|\Delta(\cdot,\cP)\|_{L_{p}([0,1)^2)}=\left(\int_{[0,1]^2}|\Delta(\bst,\cP)|^p\rd \bst\right)^{\frac{1}{p}} $$
and the star discrepancy  of $\cP$ is defined as
$$ L_{\infty}(\cP):=\|\Delta(\cdot,\cP)\|_{L_{\infty}([0,1)^2)}=\sup_{\bst \in [0,1]^2}|\Delta(\bst,\cP)|. $$
Throughout this paper, for functions $f,g:\NN \rightarrow \RR^+$, we write $g(N) \lr f(N)$ and $g(N) \gr f(N)$, 
if there exists a $C>0$ such that $g(N) \le C f(N)$ or $g(N) \geq C f(N)$ for all $N \in \NN$, $N \ge 2$, respectively. This constant $C$ is independent of $N$, but might depend on several other parameters $\alpha_1,\dots,\alpha_i$, which we sometimes emphasize by writing $\lr_{\alpha_1,\dots,\alpha_i}$ and $\gr_{\alpha_1,\dots,\alpha_i}$, respectively. Further, we write $f(N) \asymp g(N)$ if the relations $g(N) \lr f(N)$ and $g(N) \gr f(N)$ hold simultaneously. \\
 It is well known that for every $p\in [1,\infty)$ the $L_p$ discrepancy of any point set $\mathcal{P}$ consisting of $N$ points in $[0, 1)^2$ satisfies
\begin{equation} \label{roth} L_p(\mathcal{P}) \gr_p \frac{\sqrt{\log{N}}}{N}, \end{equation}
where $\log$ denotes the natural logarithm.
This was first shown by Roth \cite{Roth2} for $p = 2$ and hence for all $p \in [2,\infty)$ and later by
Schmidt \cite{schX} for all $p\in(1,2)$. The case $p=1$ was added by Hal\'{a}sz \cite{hala}. For the star discrepancy of such a point set $\cP$ we have the best possible lower bound
\begin{equation} \label{schmidt} L_{\infty}(\mathcal{P}) \gr \frac{\log{N}}{N}, \end{equation}
which is due to Schmidt~\cite{Schm72distrib}. \\
An important class of point sets with low star discrepancy are digital nets (see e.g.~\cite{Nied87,DP05}). A digital net in base $2$ is a point set $\{\boldsymbol{x}_0,\ldots, \boldsymbol{x}_{2^n-1}\}$ in the $d$-dimensional unit interval $[0,1)^d$,
which is generated by $d$ matrices of size $n\times n$. Hence we need two matrices to generate a digital net
in the unit square. The procedure is as follows. Let $n\geq 1$ be an integer.
\begin{itemize}
\item Choose a bijection $\varphi:\{0,1\} \rightarrow \mathbb{Z}_2$, where $\ZZ_2$ is the field with two elements.
\item Choose $n \times n$ matrices $C_1$ and $C_2$ over $\ZZ_2$.
\item For some $r\in\{0,1,\dots,2^n-1\}$ let $r=r_0+2r_1  +\cdots +2^{n-1}r_{n-1}$ with $r_i\in\{0,1\}$ for all $i\in\{0,\dots,n-1\}$ be the dyadic expansion of $r$. Map $r$ to the vector $\vec{r}=(\varphi(r_0),\ldots , \varphi(r_{n-1}))^{\top}$.
\item Compute $C_j \vec{r}=:(y_{r,1}^{(j)},\ldots ,y_{r,n}^{(j)})^{\top}$ for $j=1,2$.
\item Compute $x_r^{(j)}=\frac{\varphi^{-1}(y_{r,1}^{(j)})}{2}+\cdots +\frac{\varphi^{-1}(y_{r,n}^{(j)})}{2^n}$ for $j=1,2$.  
\item Set $\boldsymbol{x}_{r}=(x_r^{(1)},x_{r}^{(2)})$.
\item Repeat steps 3 to 6 for all $r\in \{0,1,\dots,2^n-1\}$ and set $\cP:=\{\bsx_1,\dots,\bsx_{2^n-1}\}$. We call $\cP$ a digital net generated by $C_1$ and $C_2$.
\end{itemize}
A point set $\cP$ in the unit square is called a $(0,n,2)$-net in base 2, if every dyadic box 
$$\left[\frac{m_1}{2^{j_1}},\frac{m_1+1}{2^{j_1}}\right) \times \left[\frac{m_2}{2^{j_2}},\frac{m_2+1}{2^{j_2}}\right),$$
where $j_1,j_2\in\NN_0$ and $m_1\in\{0,1,\dots,2^{j_1}-1\}$ and $m_2\in\{0,1,\dots,2^{j_2}-1\}$
 with volume $2^{-n}$, i.e. with $j_1+j_2=n$, contains exactly one element of $\cP$. It is well known that a digital net is a $(0,n,2)$-net if and only if the following condition holds: For every choice of integers $d_1,d_2\in \NN_0$ with $d_1+d_2=n$ the first $d_1$ lines of $C_1$ and the first $d_2$ lines of $C_2$ are linearly independent. By Niederreiter~\cite{Nied87}, we know that the star discrepancy of any $(0,n,2)$-net in base 2 is of best possible order $(\log{N})/N$. In particular, we have by~\cite{Lar} the general upper bound 
 $$ 2^n L_{\infty}(\cP)\leq \frac{n}{3}+\frac{19}{3} $$
 for every digital $(0,n,2)$-net. \\
The situation is less clear for the $L_2$ discrepancy of digital $(0,n,2)$-nets. Classical nets like the Hammersley point set (see Example~\ref{hamm}) fail to achieve the optimal order $\sqrt{\log{N}}/N$ of $L_2$ discrepancy.
To reduce the $L_2$ discrepancy of digital nets, digital shifts have been applied to such nets in many previous papers~\cite{HZ,DP05,Kri2}. A digital shift $\vecs=(\sigma_1,\dots,\sigma_n)^{\top}$ is an
element of $\ZZ_2^n$. We obtain a digit shifted digital net by altering the fourth step in the construction scheme of digital nets to $C_2 \vec{r}+\vecs=:(y_{r,1}^{(2)},\ldots ,y_{r,n}^{(2)})$; hence after multiplication of the matrix $C_2$ with the vector $\vec{r}$ we also add the digital shift, before we transform the vector back to a number in $[0,1)$. Note that by~\cite[Lemma 2.2]{Kri1} we can apply the shift only to the second component without loss of generality.
We consider the following $n\times n$ matrices over $\ZZ_2$:

\begin{equation} \label{matrixa} A_1=
\begin{pmatrix}
0 & 0 & 0 & \cdots & 0 & 0 & 1 \\
0 & 0 & 0 & \cdots & 0 & 1 & 0 \\
0 & 0 & 0 & \cdots & 1 & 0 & 0 \\
\vdots & \vdots & \vdots & \ddots & \vdots & \vdots & \vdots & \\
0 & 0 & 1 & \cdots & 0 & 0 & 0 \\
0 & 1 & 0 & \cdots & 0 & 0 & 0 \\
1 & 0 & 0 & \cdots & 0 & 0 & 0 \\
\end{pmatrix}
\text{\, and\,}
 A_2=
\begin{pmatrix}
1 & 0 & 0 & \cdots & 0 & 0 & a_{1} \\
0 & 1 & 0 & \cdots &  0 & 0 & a_{2} \\
0 & 0 & 1 & \cdots & 0 & 0 & a_{3} \\
\vdots & \vdots & \vdots & \ddots & \vdots & \vdots & \vdots & \\
0 & 0 & 0 & \cdots &  1 & 0 & a_{n-2} \\
0 & 0 & 0 & \cdots &  0 & 1 & a_{n-1} \\
0 & 0 & 0 & \cdots &  0 & 0 & 1 \\
\end{pmatrix}.
\end{equation}
We study the discrepancy of the digital net $\cP_{\bsa}(\vecs)$ with $\bsa=(a_1,\dots,a_{n-1})^T$, generated by $A_1$ and $A_2$ and shifted by $\vecs=(\sigma_1,\dots,\sigma_n)^T$. We simply write $\cP_{\bsa}$ if we do not apply a shift. The set $\cP_{\bsa}(\vecs)$ can be written as
$$ \cP_{\bsa}(\vecs)=\left\{\bigg(\frac{t_n}{2}+\dots+\frac{t_1}{2^n},\frac{b_1}{2}+\dots+\frac{b_n}{2^n}\bigg):t_1,\dots, t_n \in\{0,1\}\right\}, $$
where $b_k=t_k\oplus a_{k}t_{n}\oplus\sigma_n$ for $k\in\{1,\dots,n-1\}$ and $b_n=t_n\oplus \sigma_n$. The operation $\oplus$ denotes addition modulo 2.
 \\
We also consider symmetrized versions of shifted digital nets. It is convenient to define $\widetilde{\cP}_{\bsa}(\vecs)=\cP_{\bsa}(\vecs)\cup \cP_{\bsa}(\vecs^*)$, where
$\vecs^*=(\sigma_1\oplus 1,\dots,\sigma_n\oplus 1)^T$. Note that $\widetilde{\cP}_{\bsa}(\vecs)$ can also be written in the form
$$ \widetilde{\cP}_{\bsa}(\vecs)=\cP_{\bsa}(\vecs)\cup \{(x,1-2^{-n}-y):(x,y)\in \cP_{\bsa}(\vecs)\},$$
which justifies the terminus symmetrized digital net. It has been observed several times before that symmetrization can often reduce the $L_2$ discrepancy
of point sets to the best possible order~\eqref{roth} (see e.g.~\cite{daven,lp,bil}). We will discuss this phenomenon in more detail in Section~\ref{why}. \\
Theorem~\ref{theo1} shows an exact formula for the $L_2$ discrepancy of the class $\cP_{\bsa}(\vecs)$ of shifted digital $(0,n,2)$-nets.

\begin{theorem} \label{theo1}
 Let $L=\sum_{i=1}^{n-1}a_i (1-2\sigma_i)$ and $\ell=\sum_{i=1}^n (1-2\sigma_i)$. Then we have
   \begin{align*}
     (2^n\,L_2(\cP_{\bsa}(\vecs)))^2=& \frac{1}{64}\left((\ell-L)^2+L^2+8\ell-10L+\frac53 n\right) \\
		    &+\frac{1}{2^{n+4}}\left(2\sigma_nL-\ell+4\right)+\frac{3}{8}-\frac19\frac{1}{2^{2n+3}}.
   \end{align*}
	Hence we have $L_2(\cP_{\bsa}(\vecs))\lr \sqrt{\log{N}}/N$ if and only if the conditions $|\ell-L|\lr \sqrt{n}$ and $|L|\lr \sqrt{n}$ hold simultaneously.
\end{theorem}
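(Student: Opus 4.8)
The plan is to follow the route announced in the abstract: expand the discrepancy function $\Delta:=\Delta(\cdot,\cP_{\bsa}(\vecs))$ in the two-dimensional Haar basis and evaluate $L_2(\cP_{\bsa}(\vecs))^2=\|\Delta\|_2^2$ through Parseval's identity. Recall that in one dimension the Haar system consists of the constant function together with the functions $h_{j,m}$ supported on the dyadic interval $I_{j,m}=[m2^{-j},(m+1)2^{-j})$, equal to $+1$ on its left half and $-1$ on its right half, for $j\geq 0$ and $0\leq m<2^j$; the two-dimensional system is formed by the products $h_{\bsj,\bsm}(\bst)=h_{j_1,m_1}(t_1)\,h_{j_2,m_2}(t_2)$, indexed by levels $\bsj=(j_1,j_2)$ and shifts $\bsm=(m_1,m_2)$. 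These functions form an orthogonal basis of $L_2([0,1)^2)$, and Parseval's identity reads
\[ L_2(\cP_{\bsa}(\vecs))^2=\sum_{\bsj,\bsm}\frac{\langle\Delta,h_{\bsj,\bsm}\rangle^2}{\|h_{\bsj,\bsm}\|_2^2}, \]
so the whole task reduces to computing every Haar coefficient $\langle\Delta,h_{\bsj,\bsm}\rangle$ exactly.

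First I would treat the two constituents of $\Delta$ separately. The coefficients of the linear part $t_1t_2$ factor into one-dimensional moments of Haar functions and are explicit constants. For the counting part one uses that $\int\bsone_{[\bszero,\bst)}(\bsz)\,h_{\bsj,\bsm}(\bst)\rd\bst$ factorises into the product of the one-dimensional integrals $\int_{z_i}^1 h_{j_i,m_i}(t)\rd t$, each of which is, for $j_i\geq 0$, a tent-shaped function supported on $I_{j_i,m_i}$ (and the linear function $1-z_i$ when the constant factor is taken). Hence $\langle\Delta,h_{\bsj,\bsm}\rangle$ reduces to a sum of such tent products over the points of $\cP_{\bsa}(\vecs)$ lying in the box $I_{j_1,m_1}\times I_{j_2,m_2}$, minus the explicit volume term. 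At this stage the structure of the net enters decisively: the first coordinate is the digit reversal of $t_1\dots t_n$ and the second is given by $b_k=t_k\oplus a_kt_n\oplus\sigma_k$, $b_n=t_n\oplus\sigma_n$, so one can read off exactly which point occupies each box and at which position, while the $(0,n,2)$-property guarantees precisely one point per elementary box of area $2^{-n}$. This fixes the occupancy pattern and forces extensive cancellation, and I expect the dependence of the surviving coefficients on $\bsa$ and $\vecs$ to collapse essentially onto the two alternating digit sums $L$ and $\ell$.

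I expect the main obstacle to be the bookkeeping and the summation of these coefficients. One must carry out the case analysis for $\langle\Delta,h_{\bsj,\bsm}\rangle$ according to whether each of $j_1,j_2$ equals $-1$ (the constant factor, producing the one-dimensional marginal contributions governed by the shift sum $\ell$) or is nonnegative (the genuinely two-dimensional contributions, into which the coupling $\oplus\,a_kt_n$ introduces $L$), then square the results and sum the resulting geometric-type series over all levels $\bsj$. I expect the total to separate into one block contributing a term proportional to $n$, blocks contributing the squares $(\ell-L)^2$ and $L^2$ together with the linear terms in $\ell$ and $L$, and exponentially small remainders of sizes $2^{-n}$ and $2^{-2n}$; assembling these and inserting them into Parseval's identity should yield the stated exact formula after routine simplification.

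The asymptotic characterisation then follows from the exact formula by completing the square. Using the identity
\[ (\ell-L)^2+L^2+8\ell-10L=(\ell-L+4)^2+(L-1)^2-17 \]
and observing that $|\ell|\leq n$ and $|L|\leq n-1$, so that the term carrying the factor $2^{-(n+4)}$ and the one carrying $2^{-(2n+3)}$ stay bounded uniformly in $n$, one may rewrite the formula as
\[ (2^nL_2(\cP_{\bsa}(\vecs)))^2=\frac{1}{64}(\ell-L+4)^2+\frac{1}{64}(L-1)^2+\frac{5}{192}\,n+O(1). \]
Since $N=2^n$ gives $\sqrt{\log N}/N\asymp\sqrt{n}/2^n$, the bound $L_2(\cP_{\bsa}(\vecs))\lr\sqrt{\log N}/N$ is equivalent to $(2^nL_2(\cP_{\bsa}(\vecs)))^2\lr n$. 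The term $\frac{5}{192}n$ already provides a matching lower bound of order $n$, and the right-hand side is the sum of the two nonnegative squares and this linear term up to a bounded error; it is therefore $\lr n$ if and only if $(\ell-L+4)^2\lr n$ and $(L-1)^2\lr n$, that is, if and only if $|\ell-L|\lr\sqrt{n}$ and $|L|\lr\sqrt{n}$ hold simultaneously. This establishes the claimed equivalence.
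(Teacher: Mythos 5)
Your overall strategy is exactly the one the paper follows: expand $\Delta(\cdot,\cP_{\bsa}(\vecs))$ in the tensor Haar system, compute the coefficients, and sum their squares via Parseval's identity. The problem is that your proposal stops essentially where the paper's proof begins. The entire content of the theorem is the exact closed-form value of $(2^n L_2(\cP_{\bsa}(\vecs)))^2$, and that value can only be obtained by actually determining the coefficients $\mu_{\bsj,\bsm}$ in every level regime and summing them. In the paper this occupies all of Section 3: the index set $\NN_{-1}^2$ is partitioned into thirteen regions $\mathcal{J}_1,\dots,\mathcal{J}_{13}$ (according to whether $j_1$ or $j_2$ equals $-1$, whether $j_1+j_2\le n-2$, $=n-1$, or $\ge n$, whether $j_1=0$ or $j_2=n-1$, etc.), and each region gets a proposition computing $\mu_{\bsj,\bsm}$ exactly plus a lemma evaluating $\sum_{\bsj\in\mathcal{J}_i}2^{|\bsj|}\sum_{\bsm\in\DD_{\bsj}}|\mu_{\bsj,\bsm}|^2$; for instance, the regime $j_1+j_2\ge n$, $1\le j_1,j_2\le n-1$ requires analysing the solvability of a system of digit equations to decide which boxes $I_{\bsj,\bsm}$ contain a point at all. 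Your ``I expect the total to separate into \dots blocks'' is a prediction of the outcome of these computations, not a derivation of it. Moreover, the claim that the dependence on $\bsa$ and $\vecs$ collapses onto $L$ and $\ell$ is not true coefficientwise: the individual coefficients involve the individual digits $\sigma_k$, $a_k$, and the digits of $m_1,m_2$, and the collapse to $L$ and $\ell$ only emerges after the squared sums over $\bsm$ and $\bsj$ have been carried out. So the proof of the exact formula --- the actual assertion of the theorem --- is missing, and nothing in your outline circumvents the case-by-case work.

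Your final paragraph, by contrast, is correct and complete: granting the exact formula, the identity $(\ell-L)^2+L^2+8\ell-10L=(\ell-L+4)^2+(L-1)^2-17$, the bounds $|\ell|\le n$ and $|L|\le n-1$ (which make the terms carrying $2^{-n-4}$ and $2^{-2n-3}$ uniformly bounded), and the observation that $(2^nL_2(\cP_{\bsa}(\vecs)))^2\lr n$ holds if and only if both nonnegative squares are $\lr n$ together yield the stated equivalence. This is in fact more detail than the paper itself records for that step, which it dismisses with a single ``Hence''.
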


\begin{remark} \rm
  For a fixed $\bsa\in\ZZ_2^{n-1}$, how can we construct a shift $\vecs\in\ZZ_2^n$ which satisfies $|\ell-L|\lr\sqrt{n}$ and $|L|\lr \sqrt{n}$ simultaneously? Put $I_0:=\{i\in\{1,\dots,n-1\}:a_i=0\}$ and $I_1:=\{i\in\{1,\dots,n-1\}:a_i=1\}$ and further $\ell_0:=|\{i\in I_0: \sigma_i=0\}|$ as well as $\bar{\ell}_0:=|\{i\in I_1: \sigma_i=0\}|$. Choose $\vecs$ such that
	$||I_0|-2\ell_0|\lr\sqrt{n}$ and $||I_1|-2\bar{\ell}_0|\lr \sqrt{n}$; hence the number of zeros and ones in the components of the shifts whose indices belong to $I_0$ or $I_1$ has to be balanced, respectively.
\end{remark}

\begin{example} \rm \label{hamm}
   We study a special instance of our nets, namely the point set $\cP_{\bszero}(\vecs)$, where $\bszero=(0,\dots,0)\in\ZZ_2^{n-1}$. This point set
   is the (digit shifted) Hammersley point set in base 2 (which is also known as van der Corput set or Roth net). For $\bsa=\bszero$ we have $L=0$ and 
   $\ell=\sum_{i=1}^n (1-2\sigma_i)=\sum_{i=1}^n (2(1-\sigma_i)-1)=2z-n$, where $z$ denotes the number of zero digits in the digital shift $\vecs$.
   We insert these results into Theorem~\ref{theo1} and find
   $$ (L_2(\cP_{\bszero}(\vecs)))^2=\frac{n^2}{64}+\frac{z^2}{16}-\frac{zn}{16}-\frac{19n}{192}+\frac{z}{4}+\frac{n}{2^{n+4}}-\frac{z}{2^{n+3}}+\frac{1}{2^{n+2}}+\frac38-\frac{1}{9\cdot 2^{2n+2}}. $$
   This formula has already been obtained by Kritzer and Pillichshammer~\cite[Theorem 1]{Kri2} in 2006. Their proof is different from ours, since they used an explicit formula for the discrepancy function of the digit shifted Hammersley point set, which has been found by Larcher and Pillichshammer~\cite[Example 2]{Lar} in 2001 by an approach via Walsh functions. Like Haar functions, which will be the central tool used in this paper, Walsh functions are also an orthonormal basis of the $L_2([0,1)^2)$ space and are useful in order to study the $L_2$ discrepancy of digital nets. For more details on Walsh functions we refer to~\cite[Appendix A]{DP10}.
\end{example}  
As an immediate corollary of Theorem~\ref{theo1} we compute the $L_2$ discrepancy of the unshifted nets. We observe the surprising fact that the $L_2$ discrepancy only
depends on the number of zeros and ones in $\bsa$, but not on their positions. The result follows from Theorem~\ref{theo1} by setting $\sigma_i=0$ for all
$i=1,\dots,n$, which yields $L=\sum_{i=1}^{n-1}a_i$ and $\ell=n$.
\begin{corollary} \label{coro1}
   Let $|\bsa|=\sum_{i=1}^{n-1}a_i$. Then we have
    \begin{align*}
     (2^n\,L_2(\cP_{\bsa}))^2=& \frac{1}{64}\left((n-|\bsa|)^2+|\bsa|^2-10|\bsa|+\frac{29}{3} n\right)+\frac{3}{8}-\frac{n-4}{2^{n+4}}-\frac19\frac{1}{2^{2n+3}}.
   \end{align*}
	Hence we have $L_2(\cP_{\bsa})\gr (\log{N})/N$ for all $\bsa\in\ZZ_2^{n-1}$.
\end{corollary}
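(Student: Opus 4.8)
The plan is to obtain the closed formula by specializing Theorem~\ref{theo1} to the unshifted net and then to read off the asymptotic lower bound from the resulting expression. Setting $\sigma_i=0$ for every $i\in\{1,\dots,n\}$ gives $L=\sum_{i=1}^{n-1}a_i(1-0)=|\bsa|$, $\ell=\sum_{i=1}^n(1-0)=n$ and $\sigma_n=0$. Substituting these into the formula of Theorem~\ref{theo1}, the first bracket becomes $(n-|\bsa|)^2+|\bsa|^2+8n-10|\bsa|+\frac53 n$, and collecting the two multiples of $n$ via $8+\frac53=\frac{29}{3}$ reproduces exactly the bracket in the statement. The middle term $\frac{1}{2^{n+4}}(2\sigma_n L-\ell+4)$ collapses to $\frac{1}{2^{n+4}}(4-n)=-\frac{n-4}{2^{n+4}}$, while $\frac38$ and $-\frac19\frac{1}{2^{2n+3}}$ carry over unchanged. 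This proves the identity; note that the positions of the ones in $\bsa$ have dropped out and only their count $|\bsa|$ survives, which is the announced surprising feature.

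For the lower bound I would show that the $|\bsa|$-dependent part of the bracket grows like $n^2$ irrespective of $|\bsa|$. Writing $f(x):=(n-x)^2+x^2-10x+\frac{29}{3}n$ for $x:=|\bsa|$ and completing the square, $f$ is a convex quadratic whose minimum over all real $x$ is attained at $x=\frac{n+5}{2}$ and equals $\frac{n^2}{2}+\frac{14}{3}n-\frac{25}{2}$. The cleanest way to expose the leading term is the elementary bound $(n-x)^2+x^2\ge \frac{n^2}{2}$ (with equality at $x=n/2$), which already forces the bracket to be of order $n^2$ because the linear correction $-10x$ is at most of order $n$. Since moreover $\frac38>0$ and the two exponential tails $\frac{n-4}{2^{n+4}}$ and $\frac{1}{9\cdot2^{2n+3}}$ are bounded uniformly in $n$, the whole right-hand side exceeds $\frac{n^2}{128}$ up to $O(1)$ corrections, so that $(2^nL_2(\cP_{\bsa}))^2\gr n^2$ for every $n\ge1$. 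As $N=2^n$ yields $\log N\asymp n$, this reads $(2^nL_2(\cP_{\bsa}))^2\gr(\log N)^2$, i.e. $L_2(\cP_{\bsa})\gr(\log N)/N$.

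I do not anticipate any genuine difficulty, since the identity is a routine substitution into Theorem~\ref{theo1}. The only step warranting a moment of care is verifying that the negative linear term $-10|\bsa|$ cannot destroy the quadratic growth of the bracket; this cannot happen because it is one order of magnitude smaller in $n$ than the $\frac{n^2}{2}$ coming from $(n-|\bsa|)^2+|\bsa|^2$.
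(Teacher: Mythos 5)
Your proposal is correct and follows exactly the paper's route: the paper likewise obtains the formula by substituting $\sigma_i=0$ for all $i$ into Theorem~\ref{theo1} (giving $L=|\bsa|$, $\ell=n$), and the lower bound $L_2(\cP_{\bsa})\gr(\log N)/N$ is left implicit there but follows precisely by the quadratic-minimum argument you spell out, since $(n-|\bsa|)^2+|\bsa|^2\ge n^2/2$ dominates the linear terms and the exponentially small terms are harmless.
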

Now we fix $\bsa$ and ask how large the $L_2$ discrepancy of the shifted nets $\cP_{\bsa}(\vecs)$ is in average. In other words, we compute the mean of $(2^nL_2(\cP_{\bsa}(\vecs)))^2$ over all possible shifts $\vecs\in \ZZ_2^n$.

\begin{corollary} \label{average} 
  Let $\bsa\in\ZZ_2^{n-1}$ be fixed. Then we have
    \begin{align*}
     \frac{1}{2^n}\sum_{\vecs\in\ZZ_2^n}(2^n\,L_2(\cP_{\bsa}(\vecs)))^2=&\frac{n}{24}+\frac38+\frac{1}{2^{n+2}}-\frac{1}{9\cdot 2^{2n+3}}.
   \end{align*}
	Hence the mean of the squared $L_2$ discrepancy of $\cP_{\bsa}(\vecs)$ over all possible shifts $\vecs\in \ZZ_2^n$ is the same for all $\bsa\in\ZZ_2^{n-1}$ and of best possible order according to~\eqref{roth}.
\end{corollary}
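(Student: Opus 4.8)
The plan is to take the exact formula from Theorem~\ref{theo1} and average it term by term over the $2^n$ shifts $\vecs\in\ZZ_2^n$, exploiting linearity of the mean. Since $\bsa$ is fixed, $L$ and $\ell$ become functions of the random shift only. First I would reparametrise by writing $\epsilon_i:=1-2\sigma_i\in\{-1,+1\}$: for a uniformly random $\vecs$ the $\epsilon_1,\dots,\epsilon_n$ are independent and uniform on $\{-1,+1\}$. In this notation $L=\sum_{i\in I_1}\epsilon_i$, where $I_1=\{i\le n-1:a_i=1\}$ so that $|I_1|=|\bsa|$, and $\ell=\sum_{i=1}^n\epsilon_i$. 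The averaging then reduces to computing first and second moments of these sums of independent $\pm1$ variables.

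For the moments themselves I would argue as follows. Because each $\epsilon_i$ has mean $0$, all linear terms average to zero: the means of $L$ and of $\ell$ vanish, and so does the mean of the cross term $2\sigma_n L$, since $L$ involves only $\epsilon_1,\dots,\epsilon_{n-1}$ and is therefore independent of $\sigma_n=(1-\epsilon_n)/2$. For the quadratic terms, independence together with $\epsilon_i^2=1$ gives mean of $L^2$ equal to $|I_1|=|\bsa|$ and mean of $\ell^2$ equal to $n$. For $(\ell-L)^2$ I would observe that $\ell-L=\epsilon_n+\sum_{i\in I_0}\epsilon_i$, a sum of $n-|\bsa|$ distinct independent Rademacher variables (with $I_0=\{i\le n-1:a_i=0\}$), so its mean square is exactly $n-|\bsa|$.

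Substituting these five averages into the formula of Theorem~\ref{theo1}, the only surviving contributions are $\frac{1}{64}\bigl((n-|\bsa|)+|\bsa|+\frac53 n\bigr)$ from the first bracket, $\frac{4}{2^{n+4}}$ from the second, and the two $\vecs$-independent constants $\frac38$ and $-\frac{1}{9}\,2^{-(2n+3)}$. The decisive cancellation is $(n-|\bsa|)+|\bsa|=n$, which eliminates all dependence on $\bsa$ and collapses the first bracket to $\frac{8}{3}n$, yielding $\frac{n}{24}$; since $\frac{4}{2^{n+4}}=\frac{1}{2^{n+2}}$, the claimed identity follows. Finally, as $\frac{n}{24}$ dominates and $N=2^n$ gives $n\asymp\log N$, the mean is $\asymp\log N=(\sqrt{\log N})^2$, i.e. the square of the optimal order in~\eqref{roth}.

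I expect no serious obstacle: every step is an elementary moment computation for sums of independent $\pm1$ variables. The only points requiring care are bookkeeping the index ranges, since $L$ runs over $\{1,\dots,n-1\}$ while $\ell$ includes the index $n$ --- this is precisely what makes $\ell-L$ a sum over $n-|\bsa|$ indices and makes $\sigma_n$ independent of $L$ --- and recognising that the apparent $|\bsa|$-dependence of the individual second moments must cancel in the final sum.
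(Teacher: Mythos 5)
Your proposal is correct and follows essentially the same route as the paper: average the exact formula of Theorem~\ref{theo1} over all shifts, using $\frac{1}{2^n}\sum_{\vecs}(\ell-L)^2=n-|\bsa|$, $\frac{1}{2^n}\sum_{\vecs}L^2=|\bsa|$, and the vanishing of the means of $\ell$, $L$ and $\sigma_n L$. The paper merely states these moment identities as "not difficult to verify", so your Rademacher reparametrisation $\epsilon_i=1-2\sigma_i$ and the explicit independence argument for the cross term $2\sigma_n L$ simply supply the details the paper omits.
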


\begin{proof}
 It is not difficult to verify $\frac{1}{2^n}\sum_{\vecs\in\ZZ_2^n}(\ell-L)^2=n-|\bsa|$ and $\frac{1}{2^n}\sum_{\vecs\in\ZZ_2^n}L^2=|\bsa|$ as well as $\frac{1}{2^n}\sum_{\vecs\in\ZZ_2^n}\ell=\frac{1}{2^n}\sum_{\vecs\in\ZZ_2^n}L=0$, which leads to the result.
\end{proof}

\begin{remark} \rm
    Dick and Pillichshammer studied the problem of the mean squared $L_2$ discrepancy of digital nets in~\cite{DP05}. They did not only apply a shift $\vecs\in\ZZ_2^n$ to the first $n$ digits of the coordinates as introduced in this paper, but also added random numbers from $[0,2^{-n})$ to each component of all elements of the digital net after the shifting process. Then they computed the mean over all shifts and obtained the same result for every digital $(0,n,2)$-net. The authors also studied the problem in higher dimensions. With the methods used in~\cite{DP05} one can show that Corollary~\ref{average} actually holds for all digital $(0,n,2)$-nets.
\end{remark}
We will prove the following exact result concerning the $L_2$ discrepancy of the symmetrized nets $\widetilde{\cP}_{\bsa}(\vecs)$. This formula demonstrates that the $L_2$ discrepancy depends on $\bsa$ and on $\vecs$, but only to a minor extent.

\begin{theorem} \label{theo2}
   Let $\widetilde{\cP}_{\bsa}(\vecs)$ with $2^{n+1}$ elements. Then we have
      $$ (2^{n+1}L_2(\widetilde{\cP}_{\bsa}(\vecs)))^2=\frac{n}{24}+\frac{11}{8}+\frac{1}{2^n}-\frac{1}{9\cdot 2^{2n+1}}-\frac{(-1)^{\sigma_n}}{2^{n+2}}L. $$
	Hence the point sets $\widetilde{\cP}_{\bsa}(\vecs)$ achieve the optimal order of $L_2$ discrepancy without any conditions on $\bsa$ and $\vecs$.
\end{theorem}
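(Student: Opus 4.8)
The plan is to reduce everything to Theorem~\ref{theo1} together with a single cross term. Since $\widetilde{\cP}_{\bsa}(\vecs)=\cP_{\bsa}(\vecs)\cup\cP_{\bsa}(\vecs^*)$ is a union of two point sets of $2^n$ points each, while the linear part $-t_1t_2$ of the discrepancy function is common to all three nets, the counting parts simply average and one obtains
$$\Delta(\bst,\widetilde{\cP}_{\bsa}(\vecs))=\tfrac12\,\Delta(\bst,\cP_{\bsa}(\vecs))+\tfrac12\,\Delta(\bst,\cP_{\bsa}(\vecs^*)).$$
First I would record how the relevant data transform under $\vecs\mapsto\vecs^*=(\sigma_1\oplus1,\dots,\sigma_n\oplus1)^T$. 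Because $1-2\sigma_i^*=-(1-2\sigma_i)$, the quantities of Theorem~\ref{theo1} become $L^*=-L$, $\ell^*=-\ell$ and $\sigma_n^*=1-\sigma_n$, while geometrically $\cP_{\bsa}(\vecs^*)$ is the reflection $(x,y)\mapsto(x,1-2^{-n}-y)$ of $\cP_{\bsa}(\vecs)$, as already noted in the definition of $\widetilde{\cP}_{\bsa}(\vecs)$.

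Next I would expand the $L_2$ norm by Parseval's identity in the Haar basis. Denoting by $\mu_{\bsj,\bsm}(\vecs)$ the Haar coefficients of $\Delta(\cdot,\cP_{\bsa}(\vecs))$ that are computed in the proof of Theorem~\ref{theo1}, the averaging relation gives $\widetilde\mu_{\bsj,\bsm}=\tfrac12(\mu_{\bsj,\bsm}(\vecs)+\mu_{\bsj,\bsm}(\vecs^*))$ and hence
$$(2^{n+1}L_2(\widetilde{\cP}_{\bsa}(\vecs)))^2=(2^nL_2(\cP_{\bsa}(\vecs)))^2+(2^nL_2(\cP_{\bsa}(\vecs^*)))^2+2C,$$
where $C=2^{2n}\langle\Delta(\cdot,\cP_{\bsa}(\vecs)),\Delta(\cdot,\cP_{\bsa}(\vecs^*))\rangle$ is the cross term. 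The two self-terms are supplied by Theorem~\ref{theo1}: the first directly, the second after substituting $L\mapsto-L$, $\ell\mapsto-\ell$, $\sigma_n\mapsto1-\sigma_n$. A short computation shows that in the sum the $\ell$-linear contributions cancel and the quadratic ones double, so that the self-terms add up to $\frac{(\ell-L)^2+L^2}{32}+\frac{5n}{96}+\frac{(2\sigma_n-1)L}{2^{n+3}}+\frac{1}{2^{n+1}}+\frac34-\frac{1}{9\cdot2^{2n+2}}$.

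The crux is therefore the evaluation of $C$. Writing each Haar coefficient in the usual product form $\mu_{\bsj,\bsm}(\vecs)=\frac{1}{2^n}\sum_{\bsz\in\cP_{\bsa}(\vecs)}E_{j_1,m_1}(z_1)E_{j_2,m_2}(z_2)-F_{j_1,m_1}F_{j_2,m_2}$, with $E_{j,m}(z)=\int_z^1 h_{j,m}$ and $F_{j,m}=\int_0^1 t\,h_{j,m}(t)\rd t$, the reflection identity turns $\mu_{\bsj,\bsm}(\vecs^*)$ into the same expression with $E_{j_2,m_2}(z_2)$ replaced by $E_{j_2,m_2}(1-2^{-n}-z_2)$. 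I would then split the sum defining $C$ over the level pairs $\bsj=(j_1,j_2)$ into exactly the same cases as in the proof of Theorem~\ref{theo1} (according to which of $j_1,j_2$ equals $-1$, and to the size of $j_2$ relative to $n$), exploiting that each tent function $E_{j_2,m_2}$ is symmetric about the midpoint of its dyadic support and that all second coordinates lie in $2^{-n}\ZZ$. The decisive point, which I expect to be the main obstacle to carry out cleanly, is that $C$ comes out negative and equal to $-\frac{(\ell-L)^2+L^2}{64}+\dots$, so that $2C$ cancels the quadratic term $\frac{(\ell-L)^2+L^2}{32}$ of the self-terms \emph{exactly}; collecting the remaining lower-order pieces then yields precisely the asserted formula, in which $n/24$ is the only growing summand.

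Finally, since $\widetilde{\cP}_{\bsa}(\vecs)$ has $N=2^{n+1}$ points we have $n\asymp\log N$, and because $|L|\le n-1$ the term $\frac{(-1)^{\sigma_n}}{2^{n+2}}L$ is bounded; thus $(2^{n+1}L_2(\widetilde{\cP}_{\bsa}(\vecs)))^2\asymp n\asymp\log N$ uniformly in $\bsa$ and $\vecs$. This gives $L_2(\widetilde{\cP}_{\bsa}(\vecs))\asymp\sqrt{\log N}/N$, matching the lower bound~\eqref{roth}, so the symmetrized nets attain the optimal order for all $\bsa$ and $\vecs$.
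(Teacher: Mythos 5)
Your reduction is sound as far as it goes: the identity $\Delta(\cdot,\widetilde{\cP}_{\bsa}(\vecs))=\frac12\Delta(\cdot,\cP_{\bsa}(\vecs))+\frac12\Delta(\cdot,\cP_{\bsa}(\vecs^*))$ is correct, the transformation laws $L^*=-L$, $\ell^*=-\ell$, $\sigma_n^*=1-\sigma_n$ are correct, and your sum of the two self-terms from Theorem~\ref{theo1} checks out. (Your averaging factor $\frac12$ is also the right one; it agrees with the formulas actually computed in Section 4 and with the proof in Section 5, despite the missing $\frac12$ in the sentence opening Section 4.) But the proposal has a genuine gap at exactly the point you flag yourself: the cross term $C=2^{2n}\langle\Delta(\cdot,\cP_{\bsa}(\vecs)),\Delta(\cdot,\cP_{\bsa}(\vecs^*))\rangle$ is never evaluated. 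Its value is not derived from your outline (splitting over level pairs, symmetry of the tent functions); it is reverse-engineered from the statement of the theorem. Given your correct preliminary steps, the theorem is \emph{equivalent} to the assertion
$$2C=-\frac{(\ell-L)^2+L^2}{32}-\frac{n}{96}+\frac58+\frac{1}{2^{n+1}}-\frac{(-1)^{\sigma_n}L}{2^{n+3}}-\frac{1}{9\cdot 2^{2n+2}},$$
so claiming that "$C$ comes out negative and equal to $-\frac{(\ell-L)^2+L^2}{64}+\dots$ so that everything cancels" assumes precisely what remains to be proved. Note also that the suppressed "$\dots$" is not merely lower order: the cross term must supply a term $-n/96$ (to turn the self-terms' $5n/96$ into the asserted $n/24$) and a further $-(-1)^{\sigma_n}L/2^{n+3}$ (to double the shift-dependent term), so even the growing and shift-dependent parts of $C$ require computation, not only the quadratic cancellation.

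Moreover, the missing step is not a shortcut around the paper's work; it \emph{is} the paper's work. By Parseval, $C=2^{2n}\sum_{\bsj\in\NN_{-1}^2}2^{|\bsj|}\sum_{\bsm\in\DD_{\bsj}}\mu^{\vecs}_{\bsj,\bsm}\mu^{\vecs^*}_{\bsj,\bsm}$, and since $|\tilde{\mu}_{\bsj,\bsm}|^2=\frac14\big(|\mu^{\vecs}_{\bsj,\bsm}|^2+|\mu^{\vecs^*}_{\bsj,\bsm}|^2+2\mu^{\vecs}_{\bsj,\bsm}\mu^{\vecs^*}_{\bsj,\bsm}\big)$, evaluating $C$ case by case over the thirteen families $\mathcal{J}_1,\dots,\mathcal{J}_{13}$ is algebraically equivalent to what the paper does in Section 4: it forms $\tilde{\mu}_{\bsj,\bsm}=\frac12(\mu^{\vecs}_{\bsj,\bsm}+\mu^{\vecs^*}_{\bsj,\bsm})$ explicitly from the propositions of Section~\ref{haark} (exploiting the same $\vecs\mapsto\vecs^*$ substitutions you list), and then sums $2^{|\bsj|}\sum_{\bsm}|\tilde{\mu}_{\bsj,\bsm}|^2$ over each family. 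To complete your proof you would have to carry out those thirteen case computations, or an equivalent direct evaluation of the inner product $\langle\Delta(\cdot,\cP_{\bsa}(\vecs)),\Delta(\cdot,\cP_{\bsa}(\vecs^*))\rangle$; as it stands, what you have is a correct reduction plus a restatement of the theorem as an unproved claim about $C$.
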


\begin{remark} \rm
Again, the $L_2$ discrepancy of the unshifted symmetrized nets depend only on the parameter $|\bsa|$, as we derive 
$$ (2^{n+1}L_2(\widetilde{\cP}_{\bsa}))^2=\frac{n}{24}+\frac{11}{8}+\frac{1}{2^n}-\frac{1}{9\cdot 2^{2n+1}}-\frac{1}{2^{n+2}}|\bsa|. $$
For the symmetrized shifted Hammersley point set $\widetilde{\cP}_{\bszero}(\vecs)$ we obtain
$$ (2^{n+1}L_2(\widetilde{\cP}_{\bszero}(\vecs)))^2=\frac{n}{24}+\frac{11}{8}+\frac{1}{2^n}-\frac{1}{9\cdot 2^{2n+1}} $$
and the fact that the $L_2$ discrepancy is independent of the shift $\vecs$. This result has previously been obtained by the author in~\cite{Kritz} with the methods used in~\cite{Lar} and~\cite{Kri2}. Further we immediately obtain for every $\bsa\in\ZZ_2^{n-1}$ the average result
$$\frac{1}{2^n}\sum_{\vecs\in\ZZ_2^n}(2^n\,L_2(\widetilde{\cP}_{\bsa}(\vecs)))^2=\frac{n}{24}+\frac{11}{8}+\frac{1}{2^n}-\frac{1}{9\cdot 2^{2n+1}}.$$
Note that the fact that the nets $\widetilde{\cP}_{\bsa}$ achieve the optimal order of $L_2$ discrepancy independently of $\bsa$ follows already from~\cite[Theorem 2]{Kritz}.
\end{remark}

\begin{remark}  \rm
  Since the proofs of Theorem~\ref{theo1} and~\ref{theo2} as presented in Sections 3 and 4 are very technical and prone to mistakes, we tested the correctness of our formulas with Warnock's formula~\cite{Warn}.	It states that for a point set $\cP=\{\bsx_0,\dots,\bsx_{N-1}\}$ in the unit square with
$\bsx_k=(x_{k,1},x_{k,2})$ for $k=0,\dots,N-1$ we have
\begin{align*} 
  (N\,L_{2,N}(\cP))^2
  = \frac{N^2}{9}-\frac{N}{2}\sum_{k=0}^{N-1}\prod_{i=1}^2 (1-x_{k,i}^2)+\sum_{k,l=0}^{N-1}\prod_{i=1}^2 \left(1-\max\{x_{k,i},x_{l,i}\}\right).
\end{align*}
This formula allows us to compute the $L_2$ discrepancy of $\cP_{\bsa}(\vecs)$ exactly, provided that the number of points $N=2^n$ is small (e.g. $n=10$).
Then we can compare the results of Warnock's formula with the output of our formulas and we always observe a match. Note that Warnock's formula requires $\mathcal{O}(N^2)$
operations to compute the $L_2$ discrepancy of a given point set, whereas our formulas allow a very fast computation of this quantity for $\cP_{\bsa}(\vecs)$ and $\widetilde{\cP}_{\bsa}(\vecs)$. 
\end{remark}

We would like to close the introduction by pointing out three papers which heavily influenced the current paper. The first one is~\cite{Kri2} by Kritzer and Pillichshammer,
who obtained the exact result for the $L_2$ discrepancy of the shifted Hammersley point set and discovered the beautiful fact that it only depends on the number
of zeroes in the shift $\vecs$ but not on their position. It is a natural question whether this result can also be obtained with reasonable effort by using Haar functions, as Hinrichs~\cite{hin2010} computed the Haar coefficients of the corresponding discrepancy function exactly in almost all cases. However, the aim of his paper was to estimate the Besov norm of the discrepancy function, and therefore he was content with upper bounds rather than exact formulas in certain cases. We apply the notation of~\cite{hin2010} in this paper and use several results and ideas from there. The third paper which inspired this work is by Bilyk, Temlyakov and Yu~\cite{bil}, who computed the Fourier coefficients of the discrepancy function of the symmetrized Fibonacci lattice exactly in order to find an exact formula for its $L_2$ discrepancy. We do the same for a class of digital $(0,n,2)$-net with the difference that we compute the Haar coefficients instead of the Fourier coefficients, since Haar functions fit the structure of digital nets much better than harmonic functions. \\
The outline of this paper is as follows. In Section~\ref{haarf} we introduce the Haar function system and present general formulas for the Haar coefficients of the discrepancy function of arbitrary point sets in the unit square. Section~\ref{haark} is the longest and most technical section, in which we will compute all the Haar coefficients of $\Delta(\cdot,\cP_{\bsa}(\vecs))$ exactly and insert them into Parseval's identity in order to prove Theorem~\ref{theo1}. In Section 4 we do the same for the discrepancy function of the symmetrized nets, but we omit all the technical details. In Section 5 we will comment on the results for the Haar coefficients in the previous sections. In particular, we point out which Haar coefficients cause a large $L_2$ discrepancy of (symmetrized) digital nets. We will disprove a conjecture by Bilyk and give a new proof of a result by Larcher and Pillichshammer on symmetrized nets. In Section 6 we consider a different class of digital nets, for the $L_2$ discrepancy of which we can also find an exact formula with the same method as in Section~\ref{haark}. We therefore omit technicalities again in this section. In Section 7 we discuss the $L_p$ discrepancy of digital nets with the aid of a Littlewood-Paley inequality and in the final Section 8 we mention several open problems which we would like to investigate in future research.

\section{The Haar expansion of the discrepancy function} \label{haarf}

A dyadic interval of length $2^{-j}, j\in {\mathbb N}_0,$ in $[0,1)$ is an interval of the form 
$$ I=I_{j,m}:=\left[\frac{m}{2^j},\frac{m+1}{2^j}\right) \ \ \mbox{for } \  m=0,1,\ldots,2^j-1.$$ 
The left and right half of $I_{j,m}$ are the dyadic intervals $I_{j+1,2m}$ and $I_{j+1,2m+1}$, respectively. The Haar function $h_{j,m}$  
is the function on $[0,1)$ which is  $+1$ on the left half of $I_{j,m}$, $-1$ on the right half of $I_{j,m}$ and 0 outside of $I_{j,m}$. The $L_\infty$-normalized Haar system consists of
all Haar functions $h_{j,m}$ with $j\in{\mathbb N}_0$ and  $m=0,1,\ldots,2^j-1$ together with the indicator function $h_{-1,0}$ of $[0,1)$.
Normalized in $L_2([0,1))$ we obtain the orthonormal Haar basis of $L_2([0,1))$. 

Let ${\mathbb N}_{-1}=\NN_0 \cup \{-1\}$ and define ${\mathbb D}_j=\{0,1,\ldots,2^j-1\}$ for $j\in{\mathbb N}_0$ and ${\mathbb D}_{-1}=\{0\}$.
For $\bsj=(j_1,j_2)\in{\mathbb N}_{-1}^2$ and $\bsm=(m_1,m_2)\in {\mathbb D}_{\bsj} :={\mathbb D}_{j_1} \times {\mathbb D}_{j_2}$, 
the Haar function $h_{\bsj,\bsm}$ is given as the tensor product 
$$h_{\bsj,\bsm}(\bst) = h_{j_1,m_1}(t_1) h_{j_2,m_2}(t_2) \ \ \ \mbox{ for } \bst=(t_1,t_2)\in[0,1)^2.$$
We speak of $I_{\bsj,\bsm} = I_{j_1,m_1} \times I_{j_2,m_2}$ as dyadic boxes with level $|\bsj|=\max\{0,j_1\}+\max\{0,j_2\}$, where we set $I_{-1,0}=\bsone_{[0,1)}$. The system
$$ \{2^{\frac{|\bsj|}{2}}h_{\bsj,\bsm}: \bsj\in\NN_{-1}^2, \bsm\in \DD_{\bsj}\} $$
is an orthonormal basis of $L_2([0,1)^2)$ and we have Parseval's identity which states that for every function $f\in L_2([0,1)^2)$ we have
\begin{equation} \label{parseval}
   \|f\|_{L_2([0,1)^2)}^2=\sum_{\bsj\in \NN_{-1}^2} 2^{|\bsj|} \sum_{\bsm\in\DD_{\bsj}} |\mu_{\bsj,\bsm}|^2,
\end{equation}
where the numbers $\mu_{\bsj,\bsm}=\mu_{\bsj,\bsm}(f)=\langle f, h_{\bsj,\bsm} \rangle =\int_{[0,1)^2} f(\bst) h_{\bsj,\bsm}(\bst)\rd\bst$ are the so-called Haar coefficients of $f$. \\
Let $\cP$ be an arbitrary $2^n$-element point set in the unit square. The Haar coefficients of its discrepancy function $\Delta(\cdot,\cP)$ are as follows (see~\cite{hin2010}). By $\bsz\in I_{\bsj,\bsm}$ we actually mean $\bsz=(z_1,z_2)\in I_{\bsj,\bsm} \cap \cP.$

\begin{itemize}
  \item If $\bsj=(-1,-1)$, then 
   \begin{equation} \label{art1} \mu_{\bsj,\bsm}=2^{-n}\sum_{\bsz\in \cP} (1-z_1)(1-z_2)-\frac14. \end{equation}
   \item If $\bsj=(j_1,-1)$ with $j_1\in \NN_0$, then 
   \begin{equation} \label{art2} \mu_{\bsj,\bsm}=-2^{-n-j_1-1}\sum_{\bsz\in I_{\bsj,\bsm}} (1-|2m_1+1-2^{j_1+1}z_1|)(1-z_2)+2^{-2j_1-3}. \end{equation}
    \item If $\bsj=(-1,j_2)$ with $j_2\in \NN_0$, then 
   \begin{equation} \label{art3} \mu_{\bsj,\bsm}=-2^{-n-j_2-1}\sum_{\bsz\in I_{\bsj,\bsm}} (1-|2m_2+1-2^{j_2+1}z_2|)(1-z_1)+2^{-2j_2-3}. \end{equation}
    \item If $\bsj=(j_1,j_2)$ with $j_1,j_2\in \NN_0$, then 
   \begin{align} \label{art4} \mu_{\bsj,\bsm}=&2^{-n-j_2-j_2-2}\sum_{\bsz\in I_{\bsj,\bsm}} (1-|2m_1+1-2^{j_1+1}z_1|)(1-|2m_2+1-2^{j_2+1}z_2|) \nonumber \\ &-2^{-2j_1-2j_2-4}. \end{align}
   \end{itemize}
Note that we could also write $\bsz\in \mathring{I}_{\bsj,\bsm}$, where $\mathring{I}_{\bsj,\bsm}$ denotes the interior of $I_{\bsj,\bsm}$, since the summands in the formulas~\eqref{art2}--\eqref{art4} vanish if $\bsz$ lies on the boundary of the dyadic box. Hence, in order to compute the Haar coefficients of the discrepancy function, we have to deal with the sums over $\bsz$ which appear in the formulas above and to determine which points $\bsz=(z_1,z_2)\in \cP$ lie in the dyadic box $I_{\bsj,\bsm}$ with $\bsj\in \NN_{-1}^2$ and $\bsm=(m_1,m_2)\in\DD_{\bsj}$. If $m_1$ and $m_2$ are nonnegative integers, then they have a dyadic expansion of the form
\begin{equation} \label{mdyadic} m_1=2^{j_1-1}r_1+\dots+r_{j_1}  \text{\,  and  \,}  m_2=2^{j_2-1}s_1+\dots+s_{j_2}  \end{equation}
with digits $r_{i_1},s_{i_2}\in\{0,1\}$ for all $i_1\in\{1,\dots,j_1\}$ and $i_2\in\{1,\dots,j_2\}$, respectively.
Let $\bsz=(z_1,z_2)=\big(\frac{t_n}{2}+\dots+\frac{t_1}{2^n},\frac{b_1}{2}+\dots+\frac{b_n}{2^n}\big)$ be a point of our point set $\cP_{\bsa}(\vecs)$. Then $\bsz\in I_{\bsj,\bsm}$
if and only if 
\begin{equation} \label{cond} t_{n+1-k}=r_k \text{\, for all \,} k\in \{1,\dots, j_1\} \text{\, and \,} b_k=s_k \text{\, for all \,} k\in \{1,\dots, j_2\}. \end{equation}
Further, for such a point $\bsz=(z_1,z_2)\in I_{\bsj,\bsm}$ we have
\begin{equation} \label{z1} 2m_1+1-2^{j_1+1}z_1=1-t_{n-j_1}-2^{-1}t_{n-j_1-1}-\dots-2^{j_1-n+1}t_1 \end{equation}
and
\begin{equation} \label{z2} 2m_2+1-2^{j_2+1}z_2=1-b_{j_2+1}-2^{-1}b_{j_2+2}-\dots-2^{j_2-n+1}b_n. \end{equation}
These observations will be the starting point of all proofs in the following section.

\section{The Haar coefficients of the discrepancy function of $\cP_{\bsa}(\vecs)$} \label{haark}

Recall the definitions of $\ell$ and $L$ from Theorem~\ref{theo1}. Throughout the whole section, by $\sigma_j'$ for $j\in\{1,\dots,n-1\}$ we shall always mean $\sigma_j\oplus a_j$.
The idea for the proof of Theorem~\ref{theo1} is as follows: We partition the set $\NN_{-1}^2$ in 13 smaller sets $\mathcal{J}_i$ for $i=1,\dots,13$. Then we compute the Haar coefficients $\mu_{\bsj,\bsm}$ of $\Delta(\cdot,\cP_{\bsa}(\vecs))$ for
all $\bsj\in\mathcal{J}_i$ and further $\sum_{\bsj\in\mathcal{J}_i}2^{|\bsj|}\sum_{\bsm\in\DD_{\bsj}}|\mu_{\bsj,\bsm}|^2$. Then Theorem~\ref{theo1} follows via Parseval by
$$ (2^n\,L_2(\cP_{\bsa}(\vecs)))^2=\sum_{i=1}^{13} \sum_{\bsj\in\mathcal{J}_i}2^{|\bsj|}\sum_{\bsm\in\DD_{\bsj}}|\mu_{\bsj\bsm}|^2. $$

\paragraph{Case 1: $\bsj\in\mathcal{J}_1:=\{(-1,-1)\}$}

\begin{proposition} \label{prop1}
  Let $\bsj\in \mathcal{J}_1$ and $\bsm\in \DD_{\bsj}$. Then we have
     $$ \mu_{\bsj,\bsm}=\frac{1}{2^{n+1}}+\frac{1}{2^{2n+2}}+\frac{1}{2^{n+3}}(\ell-L). $$
\end{proposition}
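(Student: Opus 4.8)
The plan is to start directly from formula~\eqref{art1}, which for $\bsj=(-1,-1)$ and the only index $\bsm=(0,0)\in\DD_{\bsj}$ reads
$$ \mu_{\bsj,\bsm}=2^{-n}\sum_{\bsz\in\cP_{\bsa}(\vecs)}(1-z_1)(1-z_2)-\frac14, $$
so the entire task is to evaluate $S:=\sum_{\bsz}(1-z_1)(1-z_2)$ exactly. I would expand $(1-z_1)(1-z_2)=1-z_1-z_2+z_1z_2$ and sum each piece over the $2^n$ points, parametrising $\cP_{\bsa}(\vecs)$ by $(t_1,\dots,t_n)\in\{0,1\}^n$ with $z_1=\sum_{i=1}^n 2^{-i}t_{n+1-i}$ and $z_2=\sum_{k=1}^n 2^{-k}b_k$, where $b_k=t_k\oplus a_kt_n\oplus\sigma_k$ for $k<n$ and $b_n=t_n\oplus\sigma_n$.

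The linear parts are immediate: each digit $t_j$ is $1$ for exactly half of the tuples, and each $b_k$ is an affine function of $t_k$ (with coefficient $1$), so it too is $1$ for exactly half the tuples; hence $\sum_{\bsz}z_1=\sum_{\bsz}z_2=2^{n-1}(1-2^{-n})=2^{n-1}-\tfrac12$ and $\sum_{\bsz}1=2^n$. These collapse to $S=1+\sum_{\bsz}z_1z_2$, so the whole difficulty is the cross term.

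For the cross term I would write $\sum_{\bsz}z_1z_2=\sum_{i,k=1}^n 2^{-(i+k)}\,C(p,k)$ with $C(p,k):=\sum_{\bsz}t_pb_k$ and $p=n+1-i$, and the key computation is a short case analysis of $C(p,k)$. One shows $C(p,k)=2^{n-2}$ off the diagonal $p\neq k$, and also on the diagonal when $p=k<n$ with $a_k=1$; the genuine deviations occur only on the diagonal, where $C(k,k)=2^{n-2}\bigl(1+(1-a_k)(1-2\sigma_k)\bigr)$ for $k<n$ and $C(n,n)=2^{n-2}\bigl(1+(1-2\sigma_n)\bigr)$. The structural point that makes everything work is that every diagonal pair $p=k$ carries the \emph{constant} weight $2^{-(i+k)}=2^{-(n+1)}$, so the diagonal contributes $2^{-(n+1)}\,2^{n-2}\sum_{k=1}^n\mathrm{stuff}_k$ with $\sum_{k=1}^n\mathrm{stuff}_k=\ell-L$ (the $k=n$ term supplies the $(1-2\sigma_n)$ needed to complete $\ell$, while $-L$ comes from the $a_k$-weighted part). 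The off-diagonal remainder sums to the volume term $2^{n-2}(1-2^{-n})^2$.

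Assembling these gives $\sum_{\bsz}z_1z_2=2^{n-2}(1-2^{-n})^2+\tfrac18(\ell-L)$, hence $S=2^{n-2}+\tfrac12+2^{-n-2}+\tfrac18(\ell-L)$, and substituting into~\eqref{art1} and simplifying yields the claimed $\mu_{\bsj,\bsm}=\tfrac{1}{2^{n+1}}+\tfrac{1}{2^{2n+2}}+\tfrac{1}{2^{n+3}}(\ell-L)$. The main obstacle is the correlation count $C(p,k)$: when $a_k=1$ the digit $b_k$ depends on both $t_k$ and $t_n$, so one must check carefully that this extra dependence still leaves $C(p,k)$ at the base value $2^{n-2}$ off the diagonal (in particular for the pair $p=n$, $k<n$), guaranteeing that the only contributions beyond the volume term live on the diagonal and combine into $\ell-L$.
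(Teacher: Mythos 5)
Your proof is correct---the correlation counts $C(p,k)$ check out in every case (including the delicate off-diagonal case $p=n$, $k<n$, $a_k=1$, where $b_k=t_k\oplus t_n\oplus\sigma_k$ still averages to $2^{n-2}$ over the half of the cube with $t_n=1$), and the assembly $S=2^{n-2}+\tfrac12+2^{-n-2}+\tfrac18(\ell-L)$ gives exactly the claimed coefficient. However, you organize the cross term differently from the paper. The paper first conditions on the special digit $t_n$: writing $z_1=\tfrac{t_n}{2}+\tfrac{u}{2}$ with $u=2^{-1}t_{n-1}+\dots+2^{-n+1}t_1$, and letting $v_1$, $v_2$ be the truncated second coordinates for $t_n=0$ and $t_n=1$ (the latter built from the modified shift $\sigma_k'=\sigma_k\oplus a_k$), it reduces $\sum_{\bsz}z_1z_2$ to the two sums $\sum uv_1$ and $\sum uv_2$, evaluates those by digit-pair expansions in \eqref{formelv1} and \eqref{formelv2}, and recovers $\ell-L$ only at the end through the identity $1\oplus\sigma_k+1\oplus\sigma_k'=(1-a_k)(1-2\sigma_k)+1$ combined with the leftover $\sigma_n$-terms. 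You instead keep all $n$ digits on an equal footing and expand $\sum_{\bsz}z_1z_2$ into the full double sum of correlations $C(p,k)=\sum_{\bsz}t_pb_k$; your structural observation---that $C(p,k)$ equals the ``independent'' value $2^{n-2}$ off the diagonal, that deviations occur only on the diagonal where the weight $2^{-(i+k)}=2^{-(n+1)}$ is constant, and that these deviations sum exactly to $\ell-L$---makes it transparent why the Haar coefficient depends on $\bsa$ and $\vecs$ only through $\ell-L$, which the paper's route leaves somewhat hidden in the algebra. What the paper's organization buys in exchange is reusability: the quantities $u,v_1,v_2$ and formulas \eqref{formelv1}, \eqref{formelv2} are recycled verbatim in the proof of Proposition~\ref{prop5} (Case 5), whereas your decomposition is self-contained and tailored to this single coefficient.
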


\begin{proof}
  By~\eqref{art1} we have
	\begin{align*}
	   \mu_{\bsj,\bsm}=& 2^{-n}\sum_{\bsz\in \cP} (1-z_1)(1-z_2)-\frac14 \\
		  =&1-2^{-n}\sum_{\bsz\in I_{\bsj,\bsm}}z_1-2^{-n}\sum_{\bsz\in I_{\bsj,\bsm}}z_2+2^{-n}\sum_{\bsz\in I_{\bsj,\bsm}}z_1z_2-\frac14 \\
			=&-\frac14+2^{-n}+2^{-n}\sum_{\bsz\in I_{\bsj,\bsm}}z_1z_2,
	\end{align*}
	where we regarded $\sum_{\bsz\in I_{\bsj,\bsm}}z_1=\sum_{\bsz\in I_{\bsj,\bsm}}z_2=\sum_{l=0}^{2^n-1}l/2^n=2^{n-1}-2^{-1}$ in the last step. We write $u=2^{-1}t_{n-1}+\dots+2^{-n+1}t_1$ and $v_1=2^{-1}(t_1\oplus \sigma_1)+\dots+2^{n-1}(t_{n-1}\oplus \sigma_{n-1})$ as well as $v_2=2^{-1}(t_1\oplus \sigma_1')+\dots+2^{n-1}(t_{n-1}\oplus \sigma_{n-1}')$ and consider 
	\begin{align*}
	  \sum_{\bsz\in I_{\bsj,\bsm}}z_1z_2=& \sum_{t_1,\dots,t_n=0}^1 \left(\frac{t_n}{2}+\dots+\frac{t_{1}}{2^n}\right)\left(\frac{t_1\oplus a_1t_n\oplus \sigma_1}{2}+\dots+\frac{t_n\oplus\sigma_n}{2^n}\right) \\
		=& \sum_{t_1,\dots,t_{n-1}=0}^1 \Bigg(\frac{u}{2} \bigg(v_1+\frac{\sigma_n}{2^n}\bigg)+\bigg(\frac{1}{2}+\frac{u}{2}\bigg)\bigg(v_2+\frac{\sigma_n\oplus 1}{2^n}\bigg)\Bigg) \\
		=& \sum_{t_1,\dots,t_{n-1}=0}^1 \left(2^{-n-1}-2^{-n-1}\sigma_n+2^{-n-1}u+\frac{v_2}{2}+\frac12 (uv_1+uv_2)\right) \\
		=& 2^{n-1}(2^{-n-1}-2^{-n-1}\sigma_n)+(2^{n-2}-2^{-1})(2^{-n-1}+2^{-1})\\&+\frac12 \sum_{t_1,\dots,t_{n-1}=0}^1(uv_1+uv_2),
	\end{align*}
	where we use the fact that $\sum_{t_1,\dots,t_{n-1}=0}^1 u=\sum_{t_1,\dots,t_{n-1}=0}^1v_2=\sum_{l=0}^{2^{n-1}-1}l/2^{n-1}=2^{n-2}-2^{-1}$ in the last step. We have
	\begin{align}
	   \sum_{t_1,\dots,t_{n-1}=0}^1 uv_1 =&\sum_{t_1,\dots,t_{n-1}=0}^1 \left(\frac{t_{n-1}}{2}+\dots+\frac{t_{1}}{2^{n-1}}\right)\left(\frac{t_1\oplus \sigma_1}{2}+\dots+\frac{t_{n-1}\oplus\sigma_{n-1}}{2^n}\right)  \nonumber \\
		 =& \sum_{t_1,\dots,t_{n-1}=0}^1 \left(\sum_{k=1}^{n-1} \frac{t_k(t_k\oplus \sigma_k)}{2^{n-k}2^k}+\sum_{\substack{k_1, k_2=1 \\ k_1\neq k_2}}^{n-1}\frac{t_{k_1}(t_{k_2}\oplus \sigma_{k_2})}{2^{n-k_1}2^{k_2}}\right)  \nonumber\\
		 =& \frac{1}{2^{n}}\sum_{k=1}^{n-1} 2^{n-2}\sum_{t_k=0}^{1}t_k(t_k\oplus \sigma_k)+\frac{1}{2^{n}}\sum_{\substack{k_1, k_2=1 \\ k_1\neq k_2}}^{n-1}2^{k_1-k_2}2^{n-3}\sum_{t_{k_1},t_{k_2}=0}^{1}t_{k_1}(t_{k_2}\oplus \sigma_{k_2}) \nonumber\\
		=&\frac14 \sum_{k=1}^{n-1} (1\oplus \sigma_k)+\frac18 \sum_{\substack{k_1, k_2=1 \\ k_1\neq k_2}}^{n-1}2^{k_1-k_2}. \label{formelv1}
	\end{align}
	Analogously, we find
	\begin{equation} \label{formelv2} \sum_{t_1,\dots,t_{n-1}=0}^1 uv_2=\frac14 \sum_{k=1}^{n-1} (1\oplus\sigma_k')+\frac18 \sum_{\substack{k_1, k_2=1 \\ k_1\neq k_2}}^{n-1}2^{k_1-k_2} \end{equation}
	and therefore
	$$  \sum_{t_1,\dots,t_{n-1}=0}^1 (uv_1+uv_2)=\frac14 \sum_{k=1}^{n-1} \left(1\oplus \sigma_k+1\oplus \sigma_k'\right)+\frac14 \sum_{\substack{k_1, k_2=1 \\ k_1\neq k_2}}^{n-1}2^{k_1-k_2}. $$
	If $a_k=0$, then $1\oplus \sigma_k+1\oplus \sigma_k'=2-2\sigma_k$ and if $a_k=1$ then $1\oplus \sigma_k+1\oplus \sigma_k'=1$; hence $1\oplus \sigma_k+1\oplus \sigma_k'=(1-a_k)(1-2\sigma_k)+1$ and
	$$ \sum_{k=1}^{n-1} \left(1\oplus \sigma_k+1\oplus \sigma_k'\right)=\ell-(1-2\sigma_n)-L+n-1. $$
	Further, a direct calculation yields
	$$ \sum_{\substack{k_1, k_2=1 \\ k_1\neq k_2}}^{n-1}2^{k_1-k_2} = \sum_{k_1, k_2=1}^{n-1}2^{k_1-k_2}-\sum_{k=1}^{n-1}1=2^n-n-3+2^{-n+2}. $$
	Now we put everything together to arrive at the desired result.
\end{proof}
The following consequence is immediate.
\begin{lemma}
 We have 
 $$\sum_{\bsj\in\mathcal{J}_1}2^{|\bsj|}\sum_{\bsm\in\DD_{\bsj}}|\mu_{\bsj,\bsm}|^2=\left(\frac{1}{2^{n+1}}+\frac{1}{2^{2n+2}}+\frac{1}{2^{n+3}}(\ell-L)\right)^2.$$
\end{lemma}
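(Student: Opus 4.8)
The plan is to observe that the sum in question is completely degenerate, so that essentially no computation is required beyond quoting Proposition~\ref{prop1}. First I would note that $\mathcal{J}_1=\{(-1,-1)\}$ is a singleton, so the outer sum over $\bsj\in\mathcal{J}_1$ collapses to the single summand indexed by $\bsj=(-1,-1)$. For this choice the level defined in Section~\ref{haarf} is
$$ |\bsj|=\max\{0,-1\}+\max\{0,-1\}=0, $$
whence the prefactor $2^{|\bsj|}$ equals $1$. Moreover, by definition $\DD_{\bsj}=\DD_{-1}\times\DD_{-1}=\{0\}\times\{0\}$, so the inner sum over $\bsm\in\DD_{\bsj}$ likewise reduces to the single term $\bsm=(0,0)$.

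Putting these two observations together, the entire double sum in Parseval's identity~\eqref{parseval} reduces to $|\mu_{\bsj,\bsm}|^2$ evaluated at $\bsj=(-1,-1)$ and $\bsm=(0,0)$. It then remains only to insert the exact value of $\mu_{\bsj,\bsm}$ established in Proposition~\ref{prop1} and to square it, which produces precisely the claimed right-hand side. There is no genuine obstacle in this step: all of the substantive work has already been carried out in the proof of Proposition~\ref{prop1}, and the present lemma is a purely bookkeeping consequence of the definitions of $|\bsj|$ and $\DD_{\bsj}$ for the index $\bsj=(-1,-1)$. For this reason I expect the author to state it as an immediate corollary rather than to belabour the details.
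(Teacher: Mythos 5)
Your proposal is correct and coincides with the paper's treatment: the paper states this lemma as an immediate consequence of Proposition~\ref{prop1}, precisely because $\mathcal{J}_1$ and $\DD_{(-1,-1)}$ are singletons and $2^{|\bsj|}=2^0=1$, so the double sum collapses to the square of the Haar coefficient already computed. No further argument is needed.
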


\paragraph{Case 2: $\bsj\in\mathcal{J}_2:=\{(-1,j_2): 0\leq j_2 \leq n-2\}$}

\begin{proposition}
  Let $\bsj\in \mathcal{J}_2$ and $\bsm\in \DD_{\bsj}$. Then we have
     $$ \mu_{\bsj,\bsm}=2^{-2n-2}-2^{-n-j_2-3}-2^{-2n-1}(\sigma_{j_2+1}\oplus a_{j_1+1}\sigma_n)+2^{-2j_2-3}\sum_{k=1}^{j_2}\frac{s_k\oplus \sigma_k+s_k\oplus \sigma_k'}{2^{n+1-k}}. $$
\end{proposition}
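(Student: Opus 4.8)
The plan is to read $\mu_{\bsj,\bsm}$ off directly from~\eqref{art3}, so the entire task reduces to evaluating the sum $\Sigma:=\sum_{\bsz\in I_{\bsj,\bsm}}(1-|2m_2+1-2^{j_2+1}z_2|)(1-z_1)$ for $\bsj=(-1,j_2)$ with $0\le j_2\le n-2$, and then applying the prefactor $-2^{-n-j_2-1}$ and the additive constant $2^{-2j_2-3}$. First I would use~\eqref{cond}: for $\bsj=(-1,j_2)$ membership $\bsz\in I_{\bsj,\bsm}$ imposes no constraint from the first coordinate and requires $b_k=s_k$ for $1\le k\le j_2$. Since $b_k=t_k\oplus a_kt_n\oplus\sigma_k$, this pins down $t_k=s_k\oplus a_kt_n\oplus\sigma_k$ for $k\le j_2$, while $t_n$ and $t_{j_2+1},\dots,t_{n-1}$ remain free; this parametrises the $2^{n-j_2}$ points of the box.

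By~\eqref{z2} the factor $g(\bsz):=1-|2m_2+1-2^{j_2+1}z_2|$ depends only on $b_{j_2+1},\dots,b_n$, and equals $1-|1-B|$ with $B=\sum_{k=j_2+1}^{n}2^{j_2+1-k}b_k$. The key observation is that the leading free bit $b_{j_2+1}$ telescopes: its tail $\sum_{k>j_2+1}2^{j_2+1-k}b_k$ lies in $[0,1)$, so $g$ equals this tail when $b_{j_2+1}=0$ and equals $1$ minus the tail when $b_{j_2+1}=1$; summing over $b_{j_2+1}\in\{0,1\}$ therefore gives $1$ irrespective of the lower bits. Hence, for each fixed $t_n$, $\sum_{\text{free}}g=2^{n-j_2-2}$ --- this is exactly where the hypothesis $j_2\le n-2$ enters --- and the weighted moments $\sum_{\text{free}}g\,b_i$ that I will need reduce to elementary geometric sums by the same telescoping.

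Next I would split $z_1$ via~\eqref{z1} into the digits frozen by membership and $t_n$ (namely $t_n$ together with $t_k$, $k\le j_2$) and the free inner digits $t_{j_2+1},\dots,t_{n-1}$. The frozen part is constant on each $t_n$-slice, so it simply multiplies $\sum_{\text{free}}g=2^{n-j_2-2}$; tracking the contribution of $t_k=s_k\oplus a_kt_n\oplus\sigma_k$ and summing the two slices $t_n=0$ (yielding $s_k\oplus\sigma_k$) and $t_n=1$ (yielding $s_k\oplus a_k\oplus\sigma_k=s_k\oplus\sigma_k'$) produces precisely $2^{-2j_2-3}\sum_{k=1}^{j_2}\frac{(s_k\oplus\sigma_k)+(s_k\oplus\sigma_k')}{2^{n+1-k}}$ once the prefactors are multiplied out. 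The free part is evaluated with the moments $\sum_{\text{free}}g\,b_i$; here the index $k=j_2+1$ is special because $b_{j_2+1}$ is the peak bit of the tent, so its moment still carries a dependence on the frozen digit $b_n=t_n\oplus\sigma_n$. Combining this with the parity $a_{j_2+1}t_n\oplus\sigma_{j_2+1}$ coming from $t_{j_2+1}=b_{j_2+1}\oplus a_{j_2+1}t_n\oplus\sigma_{j_2+1}$ and summing over $t_n$ is what yields the term $-2^{-2n-1}(\sigma_{j_2+1}\oplus a_{j_2+1}\sigma_n)$ (the index $a_{j_2+1}$ being what the computation forces, $a_{j_1+1}$ being meaningless for $j_1=-1$).

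Finally I would reinstate the constant $2^{-2j_2-3}$ from~\eqref{art3} and the prefactor $-2^{-n-j_2-1}$, and collect. The dominant $O(2^{-2j_2})$ contributions --- those coming from the $1$ in $1-z_1$ against $\sum_{\text{free}}g$, from the mean of the free digits, and from the added constant --- are designed to cancel, leaving only the exponentially small constants $2^{-2n-2}-2^{-n-j_2-3}$, the term above, and the $s_k$-sum. I expect the main obstacle to be purely organisational: carrying the weighted tent moments through both $t_n$-slices while correctly flipping $\sigma_k\leftrightarrow\sigma_k'$, and verifying that all the $O(2^{-2j_2})$ terms cancel so that exactly the claimed lower-order remainder survives; the edge case $j_2=0$ (empty $s_k$-sum) should also be checked separately against the telescoping identity.
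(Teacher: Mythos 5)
Your proposal is correct and follows essentially the same route as the paper's own proof: both read the coefficient off \eqref{art3}, pin down $t_k=s_k\oplus a_kt_n\oplus\sigma_k$ for $k\le j_2$ via \eqref{cond}, exploit the tent structure of $1-|2m_2+1-2^{j_2+1}z_2|$ by summing over the peak bit $b_{j_2+1}$ first (so that the dominant $2^{-2j_2-3}$ contributions cancel against the additive constant), track the frozen digits over the two slices $t_n=0,1$ to obtain the $(s_k\oplus\sigma_k)+(s_k\oplus\sigma_k')$ sum, and isolate the correlation between the peak bit and $b_n=t_n\oplus\sigma_n$ to produce $-2^{-2n-1}(\sigma_{j_2+1}\oplus a_{j_2+1}\sigma_n)$. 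You are also right that $a_{j_1+1}$ in the statement is a typo for $a_{j_2+1}$, since $j_1=-1$ here.
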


\begin{proof}
   For $\bsz\in I_{\bsj,\bsm}$ we have $b_k=s_k \text{\, for all \,} k\in \{1,\dots, j_2\}$ and therefore
	 \begin{align*}1-z_1=&1-\frac{t_n}{2}-\dots-\frac{t_1}{2^n} \\
	   =&1-\frac{t_n}{2}-\dots-\frac{t_{j_2+1}}{2^{n-j_2}}-\frac{s_{j_2}\oplus a_{j_2}t_n \oplus  \sigma_{j_2}}{2^{n-j_2+1}}-\dots-\frac{s_1\oplus a_1t_n \oplus \sigma_1}{2^n} \\
		 =& 1-u-\frac{t_{j_2+1}}{2^{n-j_2}}-\varepsilon(m_2,t_n),
	\end{align*}
	where $u:=\frac{t_n}{2}-\dots-\frac{t_{j_2+2}}{2^{n-j_2-1}}$ and $\varepsilon:=\frac{s_{j_2}\oplus a_{j_2}t_n \oplus  \sigma_{j_2}}{2^{n-j_2+1}}+\dots+\frac{s_1\oplus a_1t_n \oplus \sigma_1}{2^n}$.
	Further, we have
	\begin{align*}
	 1-|2m_2+1-2^{j_2+1}z_2|=& 1-|1-b_{j_2+1}-\dots-2^{j_2-n+1}b_n|\\ =&\begin{cases}
	                                                                    v & \text{if \,} b_{j_2+1}=0; \text{\, i.e. \,} t_{j_2+1}=a_{j_2+1}t_n\oplus \sigma_{j_2+1}, \\
																																			1-v & \text{if \,} b_{j_2+1}=1; \text{\, i.e. \,} t_{j_2+1}=a_{j_2+1}t_n\oplus \sigma_{j_2+1}\oplus 1,
	                                                                \end{cases}
		\end{align*}
		where $v=v(t_n)=2^{-1}b_{j_2+2}+\dots+2^{j_2-n+1}b_n$. We fix the digits $t_{j_2+2},\dots,t_n$; hence $\varepsilon(m_2,t_n)$ is fixed by $m_2$ and $u$ and $v$ are fixed as well.
		Then we have
		\begin{align*}
		  \sum_{t_{j_2+1}=0}^{1}&(1-z_1)(1-|2m_2+1-2^{j_2+1}z_2|)\\ =&\left(1-u-\frac{a_{j_2+1}t_n\oplus \sigma_{j_2+1}}{2^{n-j_2}}-\varepsilon(m_2,t_n)\right)v \\
			 &+\left(1-u-\frac{a_{j_2+1}t_n\oplus \sigma_{j_2+1}\oplus 1}{2^{n-j_2}}-\varepsilon(m_2,t_n)\right)(1-v) \\
			=& 1-2^{-n+j_2}-\varepsilon(m_1,t_n)-u+2^{-n+j_2}v-2^{-n+j_2}(a_{j_2+1}t_n \oplus \sigma_{j_2+1}) (2v-1).
		\end{align*}
		We sum the last expression over the remaining digits $t_{j_2+2},\dots,t_n$ and regard the fact that 
		$$\sum_{t_{j_2+2},\dots,t_n=0}^{1}v=\sum_{t_{j_2+2},\dots,t_n=0}^{1}u=\sum_{l=0}^{2^{n-j_2-1}-1}\frac{l}{2^{n-j_2-1}}=2^{n-j_2-2}-2^{-1}.$$
		Hence we obtain
		\begin{align*} \sum_{\bsz\in I_{\bsj,\bsm}}&(1-z_1)(1-|2m_2+1-2^{j_2+1}z_2|) \\ =&\frac14(2^{n-j_2}-2^{-n+j_2+1}+1)-2^{-n+j_2}\sum_{t_{j_2+2},\dots,t_n=0}^{1}(a_{j_2+1}t_n \oplus \sigma_{j_2+1}) (2v-1)\\ &-\sum_{t_{j_2+2},\dots,t_n=0}^{1}\varepsilon(m_1,t_n). \end{align*}
  From the definition of $\varepsilon(m_2,t_n)$ it is easy to see that 
	$$\sum_{t_{j_2+2},\dots,t_n=0}^{1}\varepsilon(m_1,t_n)=2^{n-j_2-2}\sum_{k=1}^{j_2}\frac{s_k \oplus \sigma_k+s_k \oplus \sigma_k'}{2^{n+1-k}}.$$
	We compute $\sum_{t_{j_2+2},\dots,t_n=0}^{1}a_{j_2+1}t_n \oplus \sigma_{j_2+1} (2v-1)$ and distinguish the cases $a_{j_2+1}=0$ and $a_{j_2+1}=1$. If $a_{j_2+1}=0$, we obtain
	\begin{align*}
	   \sum_{t_{j_2+2},\dots,t_n=0}^{1}&(a_{j_2+1}t_n \oplus \sigma_{j_2+1}) (2v-1) \\
		    =& \sum_{t_{j_2+2},\dots,t_n=0}^{1}\sigma_{j_2+1} \left(2\left(\sum_{k=j_2+2}^{n-1}\frac{t_k\oplus a_kt_n \oplus \sigma_k}{2^{k-j_2-1}}+\frac{t_n \oplus \sigma_n}{2^{n-j_2-1}}\right)-1\right) \\
				=& \sum_{t_{j_2+2},\dots,t_{n-1}=0}^{1}\sigma_{j_2+1}\Bigg\{ \left(2\left(\sum_{k=j_2+2}^{n-1}\frac{t_k \oplus \sigma_k}{2^{k-j_2-1}}+\frac{\sigma_n}{2^{n-j_2-1}}\right)-1\right) \\
				&+\left(2\left(\sum_{k=j_2+2}^{n-1}\frac{t_k\oplus \sigma_k'}{2^{k-j_2-1}}+\frac{1 \oplus \sigma_n}{2^{n-j_2-1}}\right)-1\right)\Bigg\} \\
				=&\sigma_{j_2+1}\sum_{l=0}^{2^{n-j_2-2}-1}\left\{2\left(\frac{l}{2^{n-j_2-2}}+\frac{\sigma_n}{2^{n-j_2-1}}\right)-1+2\left(\frac{l}{2^{n-j_2-2}}+\frac{1-\sigma_n}{2^{n-j_2-1}}\right)-1\right\} \\
				=& -\sigma_{j_2+1}=-\sigma_{j_2+1}\oplus a_{j_2+1}\sigma_n.
	\end{align*}
	If $a_{j_2+1}=1$, then we get
	\begin{align*}
	   \sum_{t_{j_2+2},\dots,t_n=0}^{1}&(a_{j_2+1}t_n \oplus \sigma_{j_2+1}) (2v-1) \\
		    =& \sum_{t_{j_2+2},\dots,t_n=0}^{1}(t_n\oplus \sigma_{j_2+1}) \left(2\left(\sum_{k=j_2+2}^{n-1}\frac{t_k\oplus a_kt_n \oplus \sigma_k}{2^{k-j_2-1}}+\frac{t_n \oplus \sigma_n}{2^{n-j_2-1}}\right)-1\right) \\
				=& \sum_{t_{j_2+2},\dots,t_n=0}^{1} \left(2\left(\sum_{k=j_2+2}^{n-1}\frac{t_k\oplus a_k(\sigma_{j_2+1}\oplus 1) \oplus \sigma_k}{2^{k-j_2-1}}+\frac{\sigma_{j_2+1}\oplus 1 \oplus \sigma_n}{2^{n-j_2-1}}\right)-1\right) \\
			 =& \sum_{l=0}^{2^{n-j_2-2}-1}\left(2\left(\frac{l}{2^{n-j_2-2}}+\frac{\sigma_{j_1+1}\oplus\sigma_n\oplus 1}{2^{n-j_2-2}}\right)-1\right) \\
				=& \sigma_{j_2+1}\oplus \sigma_n\oplus 1 -1=-\sigma_{j_2+1}\oplus \sigma_n=-\sigma_{j_2+1}\oplus a_{j_2+1}\sigma_n.
	\end{align*}
	Thus, in any case we have $\sum_{t_{j_2+2},\dots,t_n=0}^{1}(a_{j_2+1}t_n \oplus \sigma_{j_2+1}) (2v-1)=-\sigma_{j_2+1}\oplus a_{j_2+1}\sigma_n$ and we arrive at
	\begin{align*} &\sum_{\bsz\in I_{\bsj,\bsm}}(1-z_1)(1-|2m_2+1-2^{j_2+1}z_2|) \\ =&\frac14(2^{n-j_2}-2^{-n+j_2+1}+1)+2^{-n+j_2}(\sigma_{j_2+1}\oplus a_{j_2+1}\sigma_n)-2^{n-j_2-2}\sum_{k=1}^{j_2}\frac{s_k \oplus \sigma_k+s_k \oplus \sigma_k'}{2^{n+1-k}}. \end{align*}
	The rest follows with~\eqref{art3}. 
	
\end{proof}

\begin{lemma}
 We have 
 \begin{align*}\sum_{\bsj\in \mathcal{J}_2}2^{|\bsj|}\sum_{\bsm\in\DD_{\bsj}}|\mu_{\bsj,\bsm}|^2=&\frac19 2^{-4n-6}\left(2n2^{2n}-9(n-1)2^{n+2}+2^{2n+3}-44\right) \\ &+2^{-3n-3}\left(\sum_{i=1}^{n-1}\sigma_i+\sigma_nL\right)-2^{-2n-8}\sum_{i=0}^{n-2}2^{-2i}\sum_{k=1}^{i}a_k2^{2k}.\end{align*}
\end{lemma}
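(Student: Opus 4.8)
The plan is to insert the closed form for $\mu_{\bsj,\bsm}$ from the preceding proposition into the inner sums of Parseval. For $\bsj=(-1,j_2)$ we have $|\bsj|=j_2$ and $\bsm=(0,m_2)$ with $m_2\in\{0,\dots,2^{j_2}-1\}$, so summing over $m_2$ is the same as letting the dyadic digits $s_1,\dots,s_{j_2}$ of $m_2$ run independently over $\{0,1\}$. I would split the coefficient as $\mu_{\bsj,\bsm}=A+B$, where $A=A(j_2)=2^{-2n-2}-2^{-n-j_2-3}-2^{-2n-1}(\sigma_{j_2+1}\oplus a_{j_2+1}\sigma_n)$ is independent of $m_2$ and $B=B(j_2,m_2)=2^{-2j_2-3}\sum_{k=1}^{j_2}2^{k-n-1}(s_k\oplus\sigma_k+s_k\oplus\sigma_k')$ carries the digit dependence. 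Expanding $|\mu_{\bsj,\bsm}|^2=A^2+2AB+B^2$ and summing over $m_2$ then reduces everything to sums over the individual digits $s_k$.

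The key simplification, already exploited in Proposition~\ref{prop1}, is that $s_k\oplus\sigma_k+s_k\oplus\sigma_k'$ equals $2(s_k\oplus\sigma_k)$ when $a_k=0$ and equals $1$ when $a_k=1$. From this I would record the two digit identities $\sum_{s_k=0}^1(s_k\oplus\sigma_k+s_k\oplus\sigma_k')=2$ and $\sum_{s_k=0}^1(s_k\oplus\sigma_k+s_k\oplus\sigma_k')^2=4-2a_k$, both valid for every $\sigma_k$. The first gives $\sum_{m_2}B$ in closed form, free of $\bsa$ and $\vecs$; the second, applied to the diagonal $k_1=k_2$ part of $B^2$, is the sole source of $a_k$-dependence, while the off-diagonal $k_1\neq k_2$ part factorises through $\sum_{s_k}(\cdots)=2$ and is again parameter-free. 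Notice in particular that $B^2$ carries no $\sigma$-dependence, so that the only shift-dependence in the whole quantity enters through the term $\sigma_{j_2+1}\oplus a_{j_2+1}\sigma_n$ inside $A$, via $A^2$ and the cross term $2AB$. Carrying out the inner sums and multiplying by $2^{|\bsj|}=2^{j_2}$ yields $2^{j_2}\sum_{\bsm}|\mu_{\bsj,\bsm}|^2$ as geometric series in $j_2$, plus one term proportional to $\sigma_{j_2+1}\oplus a_{j_2+1}\sigma_n$ and one proportional to $\sum_{k=1}^{j_2}a_k2^{2k}$.

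It then remains to sum over $0\le j_2\le n-2$ and collect terms. I expect the decisive step to be the shift-dependent contribution: the coefficient of $\sigma_{j_2+1}\oplus a_{j_2+1}\sigma_n$ receives a piece $2^{-3n+j_2-3}$ from $A^2$ and a piece $-2^{j_2-3n-3}+2^{-3n-3}$ from the cross term, and the cleanest route is to check that these combine to the $j_2$-independent constant $2^{-3n-3}$. The shift-dependent part therefore collapses to $2^{-3n-3}\sum_{i=1}^{n-1}(\sigma_i\oplus a_i\sigma_n)$ after reindexing $i=j_2+1$; converting this using the elementary identity $\sigma_i\oplus a_i\sigma_n=\sigma_i+a_i\sigma_n(1-2\sigma_i)$ gives $\sum_{i=1}^{n-1}\sigma_i+\sigma_n L$, which is exactly the middle term of the claimed formula. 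The $a$-dependent term $-2^{-2n-8}\sum_{i=0}^{n-2}2^{-2i}\sum_{k=1}^i a_k2^{2k}$ falls directly out of the diagonal of $B^2$ after multiplication by $2^{j_2}$, with $i=j_2$.

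Finally, the parameter-free remainder is a combination of $\sum_{j_2=0}^{n-2}2^{2j_2}(2^{-2n-2}-2^{-n-j_2-3})^2$, the matching cross term against $\sum_{m_2}B$, and the off-diagonal part of $B^2$. Each is elementary once one uses $\sum_{k=1}^{j_2}2^k=2^{j_2+1}-2$ and $\sum_{k=1}^{j_2}4^k=(4^{j_2+1}-4)/3$ for the inner sums, followed by geometric summation in $j_2$ over $0\le j_2\le n-2$; the factor $\tfrac19$ and the constants $-44$, $2^{2n+3}$, and $9(n-1)2^{n+2}$ in the first term are artefacts of assembling these series. The main obstacle is thus purely organisational: keeping the many powers of two aligned and verifying the cancellation that renders the shift-dependent coefficient independent of $j_2$.
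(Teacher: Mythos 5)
Your proposal is correct and takes essentially the same route as the paper's proof: the same split of $\mu_{\bsj,\bsm}$ into the $m_2$-independent part and the digit sum $S(m_2)$, the same two digit identities $\sum_{s_k=0}^1(s_k\oplus\sigma_k+s_k\oplus\sigma_k')=2$ and $\sum_{s_k=0}^1(s_k\oplus\sigma_k+s_k\oplus\sigma_k')^2=4-2a_k$ handling the cross term and the diagonal of $S(m_2)^2$, and the same reindexing identity $\sigma_i\oplus a_i\sigma_n=\sigma_i+a_i\sigma_n(1-2\sigma_i)$ producing $\sum_{i=1}^{n-1}\sigma_i+\sigma_n L$. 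The only difference is presentational: you spell out the cancellation that makes the shift-dependent coefficient equal to the $j_2$-independent constant $2^{-3n-3}$ (your pieces $2^{j_2-3n-3}$ and $-2^{j_2-3n-3}+2^{-3n-3}$ are exactly right), a step the paper leaves implicit in ``combining all these expressions.''
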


\begin{proof}
  We write $S(m_2):=\sum_{k=1}^{j_2}\frac{s_k \oplus \sigma_k+s_k \oplus \sigma_k'}{2^{n+1-k}}$. Then we have
	\begin{align*}
	  \sum_{m_2\in\DD_{j_2}}\mu_{\bsj,\bsm}^2=& \sum_{s_1,\dots,s_{j_2}=0}^1 \bigg\{(2^{-2n-2}-2^{-n-j_2-3}-2^{-2n-1}(a_{j_2+1}\sigma_{j_2+1}\oplus\sigma_n))^2 \\
		     &+ 2^{-2j_2-2}(2^{-2n-2}-2^{-n-j_2-3}-2^{-2n-1}(a_{j_2+1}\sigma_{j_2+1}\oplus\sigma_n))S(m_2)\\&+2^{-4j_2-6}S(m_2)^2\bigg\}.
	\end{align*}
	Since
	\begin{align*}
	  \sum_{m_2\in\DD_{j_2}}S(m_2)=&\sum_{s_1,\dots,s_{j_2}=0}^1 \sum_{k=1}^{j_2}\frac{s_k \oplus \sigma_k+s_k \oplus \sigma_k'}{2^{n+1-k}}\\
		 =&\sum_{k=1}^{j_2}2^{j_2-1}\sum_{s_k=0}^1\frac{s_k \oplus \sigma_k+s_k \oplus \sigma_k\oplus a_k}{2^{n+1-k}}  \\
		=&\sum_{k=1}^{j_2}2^{j_2-1}\frac{2}{2^{n+1-k}}=2^{2j_2-n}-2^{j_2-n}
	\end{align*}
	and
	\begin{align*}
	  \sum_{m_2\in\DD_{j_2}}S(m_2)^2=&\sum_{s_1,\dots,s_{j_2}=0}^1 \bigg\{\sum_{\substack{k_1,k_2=1\\ k_1\neq k_2}}^{j_2}\frac{(s_{k_1} \oplus \sigma_{k_1}+s_{k_1} \oplus \sigma_{k_1}')(s_{k_2} \oplus \sigma_{k_2}+s_{k_2} \oplus \sigma_{k_2}')}{2^{n+1-k_1}2^{n+1-k_2}} \\ &\hspace{3cm}+\sum_{k=1}^{j_2}\frac{(s_k \oplus \sigma_k+s_k \oplus \sigma_k')^2}{2^{2n+2-2k}} \bigg\} \\
		=& \sum_{\substack{k_1,k_2=1\\ k_1\neq k_2}}^{j_2}2^{j_2-2}\frac{4}{2^{n+1-k_1}2^{n+1-k_2}}+\sum_{k=1}^{j_2}2^{j_2-1}\frac{a_k^2+(1+a_k\oplus 1)^2}{2^{2n+2-2k}} \\
		=&\frac13 2^{-2n+j_2+2}+\frac13 2^{-2n+3j_2+1}-2^{-2n+2j_2+1}+\sum_{k=1}^{j_2}2^{j_2-1}\frac{4-2a_k}{2^{2n+2-2k}},
	\end{align*}
	we obtain the claimed result by combining all these expressions, summing $2^{|\bsj|}\sum_{\bsm\in\DD_{\bsj}}$ over all $\bsj\in\mathcal{J}_2$ and using the fact that
	\begin{align*}
	  \sum_{j_2=0}^{n-2}\sigma_{j_2+1}\oplus a_{j_2+1}\sigma_n=&\sum_{i=1}^{n-1}\sigma_{i}\oplus a_{i}\sigma_n=\sum_{i=1}^{n-1}(\sigma_i-a_i\sigma_n)^2 \\
		   =& \sum_{i=1}^{n-1}(\sigma_i-2a_i\sigma_i\sigma_n+a_i\sigma_n)=\sum_{i=1}^{n-1}\sigma_i+\sigma_nL.
	\end{align*}
\end{proof}

\paragraph{Case 3: $\bsj\in\mathcal{J}_3:=\{(-1,n-1)\}$}

\begin{proposition}
  Let $\bsj\in \mathcal{J}_3$ and $\bsm\in \DD_{\bsj}$. Then we have
     $$ \mu_{\bsj,\bsm}=2^{-2n-1}\left(-\sigma_n+\sum_{k=1}^{n-1}\frac{s_k \oplus a_k(\sigma_n\oplus 1)\oplus \sigma_k}{2^{n-k}}\right). $$
\end{proposition}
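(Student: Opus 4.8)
The plan is to specialise the general formula \eqref{art3} to the single index $\bsj=(-1,n-1)$ and then to pin down exactly which points of $\cP_{\bsa}(\vecs)$ fall into the box $I_{\bsj,\bsm}$ and which of them actually contribute to the sum. Setting $j_2=n-1$ in \eqref{art3} turns the prefactors into $2^{-n-j_2-1}=2^{-2n}$ and $2^{-2j_2-3}=2^{-2n-1}$, so that
$$ \mu_{\bsj,\bsm}=-2^{-2n}\sum_{\bsz\in I_{\bsj,\bsm}}(1-|2m_2+1-2^{n}z_2|)(1-z_1)+2^{-2n-1}. $$
Since $j_1=-1$ there is no restriction on the digits of $z_1$, and by \eqref{cond} the membership $\bsz\in I_{\bsj,\bsm}$ amounts to $b_k=s_k$ for all $k\in\{1,\dots,n-1\}$. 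Solving $t_k\oplus a_kt_n\oplus\sigma_k=s_k$ shows that, once $s_1,\dots,s_{n-1}$ (equivalently $m_2$) are fixed, the digit $t_n$ is free while $t_1,\dots,t_{n-1}$ are forced by $t_k=s_k\oplus a_kt_n\oplus\sigma_k$. Hence the box contains exactly the two points indexed by $t_n\in\{0,1\}$.

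The key simplification comes from \eqref{z2}: for $j_2=n-1$ the right-hand side collapses to $2m_2+1-2^{n}z_2=1-b_n=1-(t_n\oplus\sigma_n)$, so that $1-|2m_2+1-2^{n}z_2|=b_n=t_n\oplus\sigma_n$. This factor vanishes for the point with $t_n\oplus\sigma_n=0$ and equals $1$ for the point with $t_n=1\oplus\sigma_n$. Thus only a single point survives in the sum, and for it the whole weight reduces to $1-z_1$. Using $t_n=1\oplus\sigma_n=1-\sigma_n$ and $t_k=s_k\oplus a_k(\sigma_n\oplus1)\oplus\sigma_k$ for $k\in\{1,\dots,n-1\}$, I would expand
$$ 1-z_1=\frac{1+\sigma_n}{2}-\sum_{k=1}^{n-1}\frac{s_k\oplus a_k(\sigma_n\oplus1)\oplus\sigma_k}{2^{n+1-k}}, $$
where the leading term arises from $1-\tfrac{t_n}{2}=1-\tfrac{1-\sigma_n}{2}$.

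It then remains to substitute this into the specialised \eqref{art3} and to collect the constant terms: the contribution $-2^{-2n}\cdot\frac{1+\sigma_n}{2}=-2^{-2n-1}(1+\sigma_n)$ combines with the additive $+2^{-2n-1}$ to leave precisely $-2^{-2n-1}\sigma_n$, while each summand $\frac{s_k\oplus a_k(\sigma_n\oplus1)\oplus\sigma_k}{2^{n+1-k}}$ acquires the factor $2^{-2n}$ and rewrites, with the common prefactor $2^{-2n-1}$, as $2^{-2n-1}\cdot 2^{-(n-k)}(s_k\oplus a_k(\sigma_n\oplus1)\oplus\sigma_k)$, yielding the claimed formula. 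I expect the only genuinely delicate point to be the bookkeeping of the previous paragraph: correctly recognising that the absolute-value factor equals $b_n$ (so that exactly one of the two points drops out) and tracking the powers of two and the $\oplus$-expressions once $t_n=1\oplus\sigma_n$ is inserted. Everything after that is a routine rearrangement.
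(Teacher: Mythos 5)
Your proof is correct and takes essentially the same route as the paper's: both specialise \eqref{art3} to $j_2=n-1$, observe that the absolute-value factor collapses to $b_n=t_n\oplus\sigma_n$ so that only the point with $t_n=\sigma_n\oplus 1$ survives, and then expand $1-z_1$ for that point and collect the constants. The only difference is cosmetic---you make the membership bookkeeping (two points per box, digits $t_1,\dots,t_{n-1}$ forced by $m_2$ and $t_n$) explicit, which the paper leaves implicit in its sum over $t_n\in\{0,1\}$.
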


\begin{proof}
For $j_2=n-1$ we have $1-|2m_2+1-2^{j_2+1}z_2|=1-|1-b_n|=b_n=t_n\oplus \sigma_n$.
  Writing $\varepsilon(t_n,m_2):=\sum_{k=1}^{n-1}\frac{s_k \oplus a_kt_n \oplus \sigma_k}{2^{n+1-k}}$, we get
	\begin{align*} \sum_{\bsz\in I_{\bsj,\bsm}}(1-z_1)(1-|2m_2+1-2^{j_2+1}|)=&\sum_{t_n=0}^{1} \left(1-\frac{t_n}{2}-\varepsilon(t_n,m_2)\right)(t_n\oplus \sigma_n) \\
	                       =& 1-\frac{\sigma_n\oplus 1}{2}-\varepsilon(\sigma_n\oplus 1,m_1),
	\end{align*}
	which leads to $\mu_{\bsj,\bsm}=2^{-2n-1}(\sigma_n \oplus 1+2\varepsilon(\sigma_n\oplus 1,m_1)-1)$ via~\eqref{art3} and hence to the result.
\end{proof}

\begin{lemma}
 We have 
 $$\sum_{\bsj\in \mathcal{J}_3}2^{|\bsj|}\sum_{\bsm\in\DD_{\bsj}}|\mu_{\bsj,\bsm}|^2=\frac13 2^{-4n-4}(2^{2n}-3\cdot 2^n +2+3\sigma_n 2^{n+1}).$$
\end{lemma}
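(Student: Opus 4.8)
The plan is to exploit that $\mathcal{J}_3=\{(-1,n-1)\}$ is a singleton, so the outer sum over $\bsj$ disappears and only the weight $2^{|\bsj|}=2^{n-1}$ together with the inner sum over $\bsm\in\DD_{\bsj}$ remains. Since $\DD_{-1}=\{0\}$, the first coordinate $m_1$ is forced to be $0$, while $m_2$ ranges over $\DD_{n-1}=\{0,1,\dots,2^{n-1}-1\}$; equivalently its dyadic digits $s_1,\dots,s_{n-1}$ from~\eqref{mdyadic} run independently over $\{0,1\}$. Thus the whole quantity reduces to
$$ 2^{n-1}\sum_{s_1,\dots,s_{n-1}=0}^{1}\mu_{\bsj,\bsm}^2. $$

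To organise the computation I would abbreviate $c_k:=a_k(\sigma_n\oplus 1)\oplus \sigma_k$ and $T:=\sum_{k=1}^{n-1}(s_k\oplus c_k)2^{k-n}$, so that the preceding proposition reads $\mu_{\bsj,\bsm}=2^{-2n-1}(T-\sigma_n)$ and hence $\mu_{\bsj,\bsm}^2=2^{-4n-2}(T^2-2\sigma_n T+\sigma_n)$, where I have used $\sigma_n^2=\sigma_n$. The task then splits into evaluating the three digit-sums $\sum_{\bsm}1=2^{n-1}$, $\sum_{\bsm}T$ and $\sum_{\bsm}T^2$. For each fixed $k$ one has $\sum_{s_k=0}^{1}(s_k\oplus c_k)=1$ and, since $s_k\oplus c_k\in\{0,1\}$, also $(s_k\oplus c_k)^2=s_k\oplus c_k$; summing over the free remaining digits contributes the appropriate powers of two. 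This turns $\sum_{\bsm}T$ into $2^{n-2}\sum_{k=1}^{n-1}2^{k-n}$ and, after splitting $T^2$ into its diagonal and off-diagonal parts, turns $\sum_{\bsm}T^2$ into a combination of $\sum_{k}4^{k}$ and $\sum_{k_1\neq k_2}2^{k_1+k_2}=\big(\sum_k 2^k\big)^2-\sum_k 4^k$.

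The only genuine work is therefore the bookkeeping of the elementary geometric sums $\sum_{k=1}^{n-1}2^k=2^n-2$ and $\sum_{k=1}^{n-1}4^k=(4^n-4)/3$, together with the careful tracking of the powers of two against the prefactor $2^{-4n-2}$ and the weight $2^{n-1}$. Carrying this through gives $\sum_{\bsm}T=2^{n-2}-\tfrac12$ and $\sum_{\bsm}T^2=\tfrac13(2^{n-1}+2^{-n}-\tfrac32)$. The one point worth flagging is the behaviour of the $\sigma_n$-terms: the contribution $\sigma_n\sum_{\bsm}1-2\sigma_n\sum_{\bsm}T=\sigma_n 2^{n-1}-2\sigma_n(2^{n-2}-\tfrac12)$ collapses to the single term $\sigma_n$, which after multiplication by $2^{n-1}\cdot 2^{-4n-2}$ produces exactly the summand $3\sigma_n2^{n+1}$ appearing (with prefactor $\tfrac13 2^{-4n-4}$) in the claimed formula. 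Assembling the three pieces and factoring out $\tfrac13 2^{-4n-4}$ yields the stated identity. I expect no conceptual obstacle beyond this arithmetic, the main risk being sign and power-of-two slips in the off-diagonal sum $\sum_{k_1\neq k_2}2^{k_1+k_2}$.
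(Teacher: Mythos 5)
Your proposal is correct: the reduction to $2^{n-1}\sum_{s_1,\dots,s_{n-1}=0}^{1}\mu_{\bsj,\bsm}^2$, the intermediate values $\sum_{\bsm}T=2^{n-2}-\frac12$ and $\sum_{\bsm}T^2=\frac13\left(2^{n-1}+2^{-n}-\frac32\right)$, the collapse of the $\sigma_n$-terms $\sigma_n\sum_{\bsm}1-2\sigma_n\sum_{\bsm}T$ to the single summand $\sigma_n$, and the final assembly against the prefactor $2^{n-1}\cdot 2^{-4n-2}$ all check out and reproduce the stated formula exactly. The paper evaluates the same inner sum by a shortcut you did not invoke: since XOR-ing each digit $s_k$ by the fixed constant $c_k=a_k(\sigma_n\oplus 1)\oplus\sigma_k$ is a bijection of $\{0,1\}^{n-1}$ onto itself, the quantity $T$ sweeps out precisely the values $l/2^{n-1}$, $l=0,1,\dots,2^{n-1}-1$, as $m_2$ ranges over $\DD_{n-1}$; hence the inner sum equals $\sum_{l=0}^{2^{n-1}-1}\left(l/2^{n-1}-\sigma_n\right)^2$, which is finished off with the closed formulas for $\sum_l l$ and $\sum_l l^2$. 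Your moment expansion of $(T-\sigma_n)^2$ into diagonal and off-diagonal digit sums re-derives exactly these first and second moments coordinate-wise, at the cost of the extra bookkeeping in $\sum_{k_1\neq k_2}2^{k_1+k_2}$ that you yourself flag as the main risk. Both arguments are elementary and complete; the paper's relabeling is shorter and is the device used repeatedly in Section 3 (Cases 2, 6, 10, 11 and 12 all replace digit sums by a single index $l$), so it is worth adopting if you go on to verify the remaining cases.
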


\begin{proof}
  We have
	\begin{align*}
	   \sum_{\bsj\in \mathcal{J}_3}2^{|\bsj|}\sum_{\bsm\in\DD_{\bsj}}|\mu_{\bsj,\bsm}|^2 =& 2^{n-1}\sum_{s_1,\dots,s_{n-1}=0}^1\left(2^{-2n-1}\left(-\sigma_n+\sum_{k=1}^{n-1}\frac{s_k \oplus a_k(\sigma_n\oplus 1)\oplus \sigma_k}{2^{n-k}}\right)\right)^2 \\
		=& 2^{n-1}\sum_{l=0}^{2^{n-1}-1}\left(2^{-2n-1}\left(-\sigma_n+\frac{l}{2^{n-1}}\right)\right)^2,
	\end{align*}
	which leads to the claimed result.
	\end{proof}

\paragraph{Case 4: $\bsj\in\mathcal{J}_4:=\{(-1,j_2): j_2\geq n\}$}

\begin{proposition}
  Let $\bsj\in \mathcal{J}_4$ and $\bsm\in \DD_{\bsj}$. Then we have
     $$ \mu_{\bsj,\bsm}=2^{-2j_2-3}. $$
\end{proposition}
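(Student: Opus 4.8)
The plan is to apply the Haar-coefficient formula~\eqref{art3} for the case $\bsj=(-1,j_2)$ and to argue that the sum over $\bsz\in I_{\bsj,\bsm}$ vanishes identically once $j_2\geq n$, so that only the constant term $2^{-2j_2-3}$ survives. The underlying reason is a mismatch of resolutions: the dyadic interval $I_{j_2,m_2}$ has length $2^{-j_2}\leq 2^{-n}$, whereas every second coordinate $z_2=b_1/2+\dots+b_n/2^n$ of a point in $\cP_{\bsa}(\vecs)$ is an integer multiple of $2^{-n}$.

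First I would show that any point $\bsz\in I_{\bsj,\bsm}$ with $j_2\geq n$ necessarily satisfies $z_2=m_2/2^{j_2}$, i.e.\ $z_2$ sits exactly at the left endpoint of $I_{j_2,m_2}$. Since consecutive multiples of $2^{-n}$ are spaced $2^{-n}\geq 2^{-j_2}$ apart, the half-open interval $[m_2/2^{j_2},(m_2+1)/2^{j_2})$ can contain at most one such multiple, and if it does, that multiple must be its left endpoint. Substituting $z_2=m_2/2^{j_2}$ then gives $2m_2+1-2^{j_2+1}z_2=1$. Alternatively, one reads this off~\eqref{z2} directly: for $j_2\geq n$ the index range $\{j_2+1,\dots,n\}$ is empty, so the correction sum disappears and the value is simply $1$.

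It follows that the tent-shaped factor $1-|2m_2+1-2^{j_2+1}z_2|$ equals $1-|1|=0$ for every point lying in the box, hence the entire sum in~\eqref{art3} is zero and $\mu_{\bsj,\bsm}=2^{-2j_2-3}$. I would also remark that the claim holds uniformly regardless of whether the box is occupied: if $s_k\neq 0$ for some $k\in\{n+1,\dots,j_2\}$ then no point of the net meets the membership condition~\eqref{cond} and the sum is empty, while otherwise every summand vanishes by the computation above.

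This is the easiest of the thirteen cases, so there is no genuine obstacle. The only subtlety worth flagging is to verify that the reduction~\eqref{z2} continues to hold, as an empty sum evaluating to $1$, in the regime $j_2\geq n$---rather than uncritically reusing it outside the range $0\leq j_2\leq n-1$ for which it was originally written down.
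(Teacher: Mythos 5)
Your proposal is correct and follows essentially the same route as the paper: the paper notes that for $j_2\geq n$ no point of the net lies in the interior of $I_{\bsj,\bsm}$ (equivalently, any net point in the box sits at the left endpoint, where the factor $1-|2m_2+1-2^{j_2+1}z_2|$ vanishes), so the sum in~\eqref{art3} is zero and only the term $2^{-2j_2-3}$ survives. Your version just spells out the spacing argument and the boundary-digit caveat that the paper leaves implicit.
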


\begin{proof}
 If $j_2\geq n$, no point of $\cP$ is contained in the interior of $I_{\bsj,\bsm}$ and therefore only the linear part $-t_1t_2$ contributes to the Haar coefficient of the discrepancy function
in this case. Hence, the given formula is an immediate consequence of~\eqref{art3}.
\end{proof}

\begin{lemma}
 We have 
 $$\sum_{\bsj\in \mathcal{J}_4}2^{|\bsj|}\sum_{\bsm\in\DD_{\bsj}}|\mu_{\bsj,\bsm}|^2=\frac{1}{48}2^{-2n}.$$
\end{lemma}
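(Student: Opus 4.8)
The plan is to insert the value $\mu_{\bsj,\bsm}=2^{-2j_2-3}$ from the preceding proposition directly into Parseval's sum and to evaluate the resulting geometric series. The key observation is that for $\bsj=(-1,j_2)$ with $j_2\geq n$ everything is completely explicit and independent of $\bsm$, so no genuine summation over the $\bsm$-index is required beyond a cardinality count.

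First I would record the elementary bookkeeping for such a multi-index. Since the first component equals $-1$, the level is $|\bsj|=\max\{0,-1\}+\max\{0,j_2\}=j_2$, and the index set $\DD_{\bsj}=\DD_{-1}\times\DD_{j_2}=\{0\}\times\{0,1,\dots,2^{j_2}-1\}$ has cardinality $2^{j_2}$. Next, because $|\mu_{\bsj,\bsm}|^2=2^{-4j_2-6}$ does not depend on $\bsm$, the inner sum over $\DD_{\bsj}$ merely multiplies this constant by $2^{j_2}$; combined with the Parseval weight $2^{|\bsj|}=2^{j_2}$ this yields, for each fixed $j_2\geq n$,
$$ 2^{|\bsj|}\sum_{\bsm\in\DD_{\bsj}}|\mu_{\bsj,\bsm}|^2=2^{j_2}\cdot 2^{j_2}\cdot 2^{-4j_2-6}=2^{-2j_2-6}. $$

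Finally I would sum this over $j_2=n,n+1,\dots$, which is a convergent geometric series of ratio $2^{-2}=1/4$:
$$ \sum_{j_2\geq n}2^{-2j_2-6}=2^{-6}\,\frac{2^{-2n}}{1-1/4}=2^{-6}\cdot\frac43\cdot 2^{-2n}=\frac{1}{48}2^{-2n}, $$
which is exactly the asserted value. There is essentially no obstacle here: the only points calling for a little care are getting $|\bsj|$ and $|\DD_{\bsj}|$ right for an index whose first entry is $-1$, and making sure the series is indexed to start at $j_2=n$, so that the constant comes out to precisely $1/48$ rather than a neighbouring value.
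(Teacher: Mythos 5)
Your proof is correct and follows exactly the same route as the paper, which simply writes the whole computation as the one-line geometric series $\sum_{j_2=n}^{\infty}2^{2j_2}\,2^{-4j_2-6}=\frac{1}{48}2^{-2n}$. Your accounting of $|\bsj|=j_2$, $|\DD_{\bsj}|=2^{j_2}$, and the constant coefficient $2^{-2j_2-3}$ matches the paper's implicit steps precisely.
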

  
\begin{proof}
 It is easy to compute
   $$\sum_{\bsj\in \mathcal{J}_4}2^{|\bsj|}\sum_{\bsm\in\DD_{\bsj}}|\mu_{\bsj,\bsm}|^2=\sum_{j_2=n}^{\infty} 2^{2j_2} 2^{-4j_2-6}=\frac{1}{48}2^{-2n}.$$
\end{proof}

\paragraph{Case 5: $\bsj\in\mathcal{J}_5:=\{(0,-1)\}$}

\begin{proposition} \label{prop5}
  Let $\bsj\in \mathcal{J}_5$ and $\bsm\in \DD_{\bsj}$. Then we have
     $$ \mu_{\bsj,\bsm}=\frac{1}{2^{2n+2}}-\frac{1}{2^{n+3}}+\frac{1}{2^{n+3}}L-\frac{1}{2^{2n+1}}\sigma_n. $$
\end{proposition}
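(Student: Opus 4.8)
The plan is to apply the Haar-coefficient formula \eqref{art2} for the case $\bsj=(j_1,-1)$ with $j_1=0$. Since $\DD_{(0,-1)}=\{(0,0)\}$, the only box is $I_{\bsj,\bsm}=[0,1)^2$, which contains all $2^n$ points, and with $m_1=0$ the formula reads
$$ \mu_{\bsj,\bsm}=-2^{-n-1}\sum_{\bsz\in\cP}(1-|1-2z_1|)(1-z_2)+2^{-3}. $$
So everything reduces to evaluating the single sum $\Sigma:=\sum_{\bsz}(1-|1-2z_1|)(1-z_2)$.

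First I would read off the first-coordinate factor from \eqref{z1} with $j_1=0$ and $m_1=0$: there $2m_1+1-2z_1=1-t_n-u$ with $u=2^{-1}t_{n-1}+\dots+2^{-n+1}t_1$, so that $1-|1-2z_1|$ equals $u$ when $t_n=0$ and $1-u$ when $t_n=1$. Accordingly I split $\Sigma=S_0+S_1$ according to $t_n\in\{0,1\}$ and sum over $t_1,\dots,t_{n-1}$. In each half the second coordinate collapses: for $t_n=0$ one has $b_k=t_k\oplus\sigma_k$ for $k<n$ and $b_n=\sigma_n$, while for $t_n=1$ one has $b_k=t_k\oplus\sigma_k'$ and $b_n=1\oplus\sigma_n$. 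Thus the fractional parts of $z_2$ are exactly the quantities $v_1$ and $v_2$ already used in Proposition~\ref{prop1}, each augmented by the single digit $b_n/2^n$.

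The heart of the computation is the cross term, and here I would recycle earlier work. Expanding $S_0$ and $S_1$ produces the elementary sums $\sum u=2^{n-2}-2^{-1}$ and $\sum(1-u)=2^{n-2}+2^{-1}$ together with $\sum u\,v_1$ and $\sum u\,v_2$, which are precisely \eqref{formelv1} and \eqref{formelv2}. When the two cross terms are combined, the off-diagonal double sums $\sum_{k_1\neq k_2}2^{k_1-k_2}$ cancel, and the diagonal parts leave $\tfrac14\sum_{k=1}^{n-1}\big((1\oplus\sigma_k')-(1\oplus\sigma_k)\big)$. Using $\sigma_k'=\sigma_k\oplus a_k$ one checks the pointwise identity $(1\oplus\sigma_k')-(1\oplus\sigma_k)=-a_k(1-2\sigma_k)$, so this collapses to $-\tfrac14 L$; this is exactly where the parameter $L$ enters.

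Finally I would gather the remaining constant and $\sigma_n$-dependent contributions, coming from the single digit $b_n$ (equal to $\sigma_n$ in the $t_n=0$ half and $1\oplus\sigma_n$ in the $t_n=1$ half) together with the elementary sums of $z_2$ over each half. Combining these via $1\oplus\sigma_n-\sigma_n=1-2\sigma_n$, the half-integer pieces cancel and only $\sigma_n2^{-n}$ survives. Collecting everything gives $\Sigma=2^{n-2}+2^{-2}-2^{-n-1}-\tfrac14 L+\sigma_n2^{-n}$, and substituting this into the formula above and simplifying yields $\mu_{\bsj,\bsm}=2^{-2n-2}-2^{-n-3}+2^{-n-3}L-2^{-2n-1}\sigma_n$, as claimed. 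The only point demanding real care is the bookkeeping of these $b_n$-contributions across the two halves, since that is where the $\sigma_n$-term is produced and where sign errors are easiest to make.
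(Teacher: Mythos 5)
Your proposal is correct and follows essentially the same route as the paper's proof: splitting the sum according to $t_n\in\{0,1\}$, writing the coordinates via $u$, $v_1$, $v_2$, reusing \eqref{formelv1} and \eqref{formelv2} so that the off-diagonal sums cancel and the cross term collapses to $-\tfrac14 L$, and substituting the resulting value $\Sigma=2^{n-2}+2^{-2}-2^{-n-1}-\tfrac14 L+\sigma_n 2^{-n}$ into \eqref{art2}. All intermediate quantities match those in the paper, so nothing further is needed.
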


\begin{proof}
  For $\bsz\in\cP_{\bsa}(\vecs)$ we have 
	$$ 1-z_2=1-\frac{b_1}{2}-\dots-\frac{b_n}{2^n}=1-\frac{t_1\oplus a_1t_n \oplus \sigma_1}{2}-\dots-\frac{t_{n-1}\oplus a_{n-1}t_n \oplus \sigma_{n-1}}{2^{n-1}}-\frac{t_n \oplus \sigma_n}{2^n} $$
	and
	$$ 1-|2m_1+1-2z_1|=1-|1-2z_1|=1-\left|1-t_n-\frac{t_{n-1}}{2}-\dots-\frac{t_1}{2^{n-1}}\right|. $$
	Hence, writing $u=2^{-1}t_{n-1}+\dots+2^{-n+1}t_1$, $v_1=2^{-1}(t_1\oplus\sigma_1)+\dots+2^{-n+1}(t_{n-1}\oplus\sigma_{n-1})$ and $v_2=2^{-1}(t_1\oplus\sigma_1')+\dots+2^{-n+1}(t_{n-1}\oplus\sigma_{n-1}')$, we have
	\begin{align*}
	  &\sum_{\bsz\in\cP}(1-|2m_1+1-2z_1|)(1-z_2) \\ =& \sum_{t_1,\dots,t_{n}=0}^1 \left(1-\left|1-t_n-\frac{t_{n-1}}{2}-\dots-\frac{t_1}{2^{n-1}}\right|\right) \\ &\times \left(1-\frac{t_1\oplus a_1t_n \oplus \sigma_1}{2}-\dots-\frac{t_{n-1}\oplus a_{n-1}t_n \oplus \sigma_{n-1}}{2^{n-1}}-\frac{t_n \oplus \sigma_n}{2^n}\right) \\
		 =& \sum_{t_1,\dots,t_{n-1}=0}^1 \left\{u\left(1-v_1-\frac{\sigma_n}{2^n}\right)+(1-u)\left(1-v_2-\frac{\sigma_n\oplus 1}{2^n}\right)\right\} \\
		 =& \sum_{t_1,\dots,t_{n-1}=0}^1 \left\{1-2^{-n}+2^{-n}\sigma_n-v_2+u(2^{-n}-2^{-n+1}\sigma_n)+u(v_2-v_1)\right\} \\
		 =& 2^{n-1}(1-2^{-n}+2^{-n}\sigma_n) +(2^{n-2}-2^{-1})(-1+2^{-n}-2^{-n+1}\sigma_n)+\sum_{t_1,\dots,t_{n-1}=0}^1u(v_2-v_1),
	\end{align*}
	where we regarded $\sum_{t_1,\dots,t_{n-1}=0}^1u=\sum_{t_1,\dots,t_{n-1}=0}^1v_2=2^{n-2}-2^{-1}$ in the last step. By~\eqref{formelv1} and~\eqref{formelv2} we find
	$$ \sum_{t_1,\dots,t_{n-1}=0}^1u(v_2-v_1)=\frac14 \sum_{k=1}^{n-1}\left(\sigma_k'\oplus 1 - \sigma_k \oplus 1\right)=-\frac14\sum_{k=1}^{n-1}a_k(1-2\sigma_k)=-\frac{L}{4}, $$
	and therefore
	$$ \sum_{\bsz\in\cP}(1-|2m_1+1-2z_1|)(1-z_2)=\frac14-2^{-n-1}+2^{n-2}+2^{-n}\sigma_n-\frac{L}{4}. $$
	The rest follows with~\eqref{art2}.
\end{proof}

\begin{lemma}
 We have 
 $$\sum_{\bsj\in \mathcal{J}_5}2^{|\bsj|}\sum_{\bsm\in\DD_{\bsj}}|\mu_{\bsj,\bsm}|^2=\left(\frac{1}{2^{2n+2}}-\frac{1}{2^{n+3}}+\frac{1}{2^{n+3}}L-\frac{1}{2^{2n+1}}\sigma_n\right)^2.$$
\end{lemma}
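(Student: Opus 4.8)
The plan is to observe that the index block $\mathcal{J}_5$ is a singleton, so that the nested Parseval sum collapses to a single squared Haar coefficient whose exact value has already been computed in Proposition~\ref{prop5}.

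First I would unpack the index sets. Since $\mathcal{J}_5=\{(0,-1)\}$, there is exactly one pair $\bsj=(j_1,j_2)=(0,-1)$ to consider. For this $\bsj$ the level is $|\bsj|=\max\{0,0\}+\max\{0,-1\}=0$, so the weight $2^{|\bsj|}$ equals $1$. Moreover $\DD_{\bsj}=\DD_0\times\DD_{-1}=\{0\}\times\{0\}$ is again a singleton, so the inner sum over $\bsm\in\DD_{\bsj}$ runs over the single index $\bsm=(0,0)$.

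Consequently the entire double sum reduces to the single term
$$\sum_{\bsj\in\mathcal{J}_5}2^{|\bsj|}\sum_{\bsm\in\DD_{\bsj}}|\mu_{\bsj,\bsm}|^2=2^0\,|\mu_{(0,-1),(0,0)}|^2=|\mu_{(0,-1),(0,0)}|^2.$$
Finally I would insert the explicit value of this coefficient furnished by Proposition~\ref{prop5}, namely $\mu_{\bsj,\bsm}=\frac{1}{2^{2n+2}}-\frac{1}{2^{n+3}}+\frac{1}{2^{n+3}}L-\frac{1}{2^{2n+1}}\sigma_n$, and square it to obtain precisely the asserted formula.

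There is no genuine obstacle here: all of the analytic work has already been carried out in Proposition~\ref{prop5}, and this lemma is merely the observation that, because both the outer index set $\mathcal{J}_5$ and the inner index set $\DD_{\bsj}$ are singletons and $|\bsj|=0$, the contribution of the block $\mathcal{J}_5$ to Parseval's identity is exactly the square of that one coefficient.
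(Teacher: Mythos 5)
Your proposal is correct and coincides with the paper's own (implicit) argument: since $\mathcal{J}_5=\{(0,-1)\}$ and $\DD_{(0,-1)}=\{0\}\times\{0\}$ are singletons with $|\bsj|=0$, the double sum collapses to $|\mu_{(0,-1),(0,0)}|^2$, which is exactly the square of the value given in Proposition~\ref{prop5}. The paper states this lemma without proof precisely because it is this immediate consequence.
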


\paragraph{Case 6: $\bsj\in\mathcal{J}_6:=\{(j_1,-1): 1\leq j_1 \leq n-1 \}$}

\begin{proposition}
  Let $\bsj\in \mathcal{J}_6$ and $\bsm\in \DD_{\bsj}$. Then we have
     $$ \mu_{\bsj,\bsm}=2^{-2n-2j_1-3}\left(2^{2j_1+1}-2^{j_1+n}+2^{2n+1}\varepsilon(m_1)-2^{2j_1+2}(a_{n-j_1}r_1 \oplus \sigma_{n-j_1})\right), $$
     where $\varepsilon(m_1)=\frac{r_1 \oplus \sigma_n}{2^n}+\sum_{k=2}^{j_1}\frac{r_k \oplus a_{n+1-k}r_1\oplus \sigma_{n+1-k}}{2^{n+1-k}}$.
\end{proposition}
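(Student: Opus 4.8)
The plan is to start from representation~\eqref{art2} for $\bsj=(j_1,-1)$,
$$\mu_{\bsj,\bsm}=-2^{-n-j_1-1}\sum_{\bsz\in I_{\bsj,\bsm}}(1-|2m_1+1-2^{j_1+1}z_1|)(1-z_2)+2^{-2j_1-3},$$
and to evaluate the inner sum explicitly. By the membership condition~\eqref{cond}, fixing $\bsm=(m_1,-1)$ with $m_1=2^{j_1-1}r_1+\dots+r_{j_1}$ pins down the top $j_1$ digits of the first coordinate, $t_{n+1-k}=r_k$ for $k\in\{1,\dots,j_1\}$, so the free digits are $t_1,\dots,t_{n-j_1}$. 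First I would use~\eqref{z1} to write the first factor as a tent: with $u:=2^{-1}t_{n-j_1-1}+\dots+2^{j_1-n+1}t_1$ one has $1-|2m_1+1-2^{j_1+1}z_1|=u$ if $t_{n-j_1}=0$ and $=1-u$ if $t_{n-j_1}=1$.

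Next I would split the second factor $1-z_2$ into a fixed and a free part. Since $t_n=r_1$ is fixed, the digits $b_n=r_1\oplus\sigma_n$ and $b_{n+1-k}=r_k\oplus a_{n+1-k}r_1\oplus\sigma_{n+1-k}$ (for $k=2,\dots,j_1$) are determined, and their contribution to $z_2$ is exactly $\varepsilon(m_1)$. Writing $\tau_k:=a_kr_1\oplus\sigma_k$, the free part is $\sum_{k=1}^{n-j_1}(t_k\oplus\tau_k)2^{-k}$. The key observation is that the digit $t_{n-j_1}$ appears simultaneously in the tent (as its integer part) and in $z_2$ (through $b_{n-j_1}=t_{n-j_1}\oplus\tau_{n-j_1}$), so the two factors are coupled precisely through this one digit. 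I would therefore sum over $t_{n-j_1}\in\{0,1\}$ first, with the lower digits $t_1,\dots,t_{n-j_1-1}$ held fixed (these fix $u$ and the remaining free part $V:=\sum_{k=1}^{n-j_1-1}(t_k\oplus\tau_k)2^{-k}$ of $z_2$). A short computation collapses the two terms to
$$\Bigl(1-\varepsilon(m_1)-V\Bigr)-\frac{1-\tau_{n-j_1}}{2^{n-j_1}}+u\,\frac{1-2\tau_{n-j_1}}{2^{n-j_1}},$$
in which the binary choice $b_{n-j_1}\in\{0,1\}$ has been eliminated and only the parity $\tau_{n-j_1}$ survives, using $1\oplus\tau_{n-j_1}=1-\tau_{n-j_1}$.

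Summing this over the remaining free digits $t_1,\dots,t_{n-j_1-1}$ is then routine: I would use the standard averages $\sum u=\sum V=2^{n-j_1-2}-2^{-1}$ (both run over all dyadic values $l/2^{n-j_1-1}$) together with $\sum 1=2^{n-j_1-1}$. After collecting terms, the two contributions carrying $\tau_{n-j_1}$ combine so that the sum equals $2^{n-j_1-2}-2^{n-j_1-1}\varepsilon(m_1)+\frac14-(1-2\tau_{n-j_1})2^{-(n-j_1+1)}$. Inserting this into~\eqref{art2}, the constants $\pm2^{-2j_1-3}$ cancel and one is left with $2^{-2j_1-2}\varepsilon(m_1)-2^{-n-j_1-3}+2^{-2n-2}(1-2\tau_{n-j_1})$, which is exactly the claimed expression once one expands the bracket in $2^{-2n-2j_1-3}(\dots)$ and recalls $\tau_{n-j_1}=a_{n-j_1}r_1\oplus\sigma_{n-j_1}$.

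The main obstacle is the bookkeeping around the shared digit $t_{n-j_1}$: it is tempting to treat the tent factor and the second coordinate independently, but they must be summed jointly over $t_{n-j_1}$, and the exclusive-or in $b_{n-j_1}$ must be linearised before the remaining averaging. A secondary (purely mechanical) subtlety is verifying that $\sum_{t_k}(t_k\oplus\tau_k)$ is independent of $\tau_k$, which is what keeps the averages above free of the shift and explains why only $\tau_{n-j_1}$, and not the lower $\tau_k$, enters the final formula. No idea beyond~\eqref{z1}, \eqref{z2} and~\eqref{art2} is needed; the argument parallels the case $\mathcal{J}_5$ of Proposition~\ref{prop5}, with the single digit $t_{n-j_1}$ now playing the role that $t_n$ played there.
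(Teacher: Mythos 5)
Your proposal is correct and follows essentially the same route as the paper: starting from~\eqref{art2}, fixing the top digits via~\eqref{cond}, writing the tent factor piecewise in the coupled digit $t_{n-j_1}$, splitting $z_2$ into the $m_1$-determined part $\varepsilon(m_1)$ and the free part, summing over $t_{n-j_1}$ first and then averaging the remaining digits with $\sum u=\sum V=2^{n-j_1-2}-2^{-1}$. Your intermediate sum $2^{n-j_1-2}-2^{n-j_1-1}\varepsilon(m_1)+\frac14-(1-2\tau_{n-j_1})2^{-(n-j_1+1)}$ agrees exactly with the paper's $2^{-n-j_1-2}\left(2^{2n}+2^{j_1+n}-2^{2j_1+1}-2^{2n+1}\varepsilon(m_1)+2^{2j_1+2}(a_{n-j_1}r_1\oplus\sigma_{n-j_1})\right)$, so the final insertion into~\eqref{art2} yields the stated formula.
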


\begin{proof}
 Similar to the proof of Proposition 2 we write
$$ 1-z_2=1-u-\frac{b_{n-j_1}}{2^{n-j_1}}-\varepsilon(m_1) $$
with $u=2^{-1}b_1+\dots+2^{-n+j_1+1}b_{n-j_1-1}$ and 
$$\varepsilon(m_1)=2^{-n+j_1-1}b_{n-j_1+1}+\dots+2^{-n}b_n=\frac{r_1 \oplus \sigma_n}{2^n}+\sum_{k=2}^{j_1}\frac{r_k \oplus a_{n+1-k}r_1\oplus \sigma_{n+1-k}}{2^{n+1-k}}.$$
We also have
\begin{align*}
	 1-|2m_1+1-2^{j_1+1}z_1|=& 1-|1-t_{n-j_1}-\dots-2^{j_1-n+1}t_1|\\ =&\begin{cases}
	                                                                    v & \text{if \,} t_{n-j_1}=0, \\
																																			1-v & \text{if \,} t_{n-j_1}=1,
	                                                                \end{cases}
		\end{align*}
		where $v=2^{-1}t_{n-j_1-1}+\dots+2^{j_1-n+1}t_1$.
		Note that $\varepsilon$ and $v$ are fully determined by the condition $\bsz\in I_{\bsj,\bsm}$, as this condition
		fixes $t_n=r_1$ in particular. The only free digits are $t_1,\dots,t_{n-j_1}$. Let us first fix $t_1,\dots,t_{n-j_1-1}$
		and hence $u$. Then we have
		\begin{align*}
		  \sum_{t_{n-j_1}=0}^{1}&\left(1-u-\frac{b_{n-j_1}}{2^{n-j_1}}-\varepsilon(m_1)\right)(1-|1-t_{n-j_1}-\dots-2^{j_1-n+1}t_1|)\\
			 =& \left(1-u-\frac{a_{n-j_1}r_1\oplus \sigma_{n-j_1}}{2^{n-j_1}}-\varepsilon(m_1)\right)v \\&+\left(1-u-\frac{1\oplus a_{n-j_1}r_1\oplus \sigma_{n-j_1}}{2^{n-j_1}}-\varepsilon(m_1)\right)(1-v) \\
			 =& 1-2^{-n+j_1}-\varepsilon-u+2^{-n+j_1}v-2^{-n+j_1+1}(a_{n-j_1}r_1\oplus\sigma_{n-j_1})v\\&+2^{-n+j_1}(a_{n-j_1}r_1\oplus\sigma_{n-j_1}).
		\end{align*}
		Regarding $\sum_{t_1,\dots,t_{n-j_1-1}=0}^{1}u=\sum_{t_1,\dots,t_{n-j_1-1}=0}^{1}v=\sum_{l=0}^{2^{n-j_1-1}-1}\frac{l}{2^{n-j_1-1}}=2^{n-j_1-2}-\frac12$, we find
		\begin{align*} \sum_{\bsz\in I_{\bsj,\bsm}}&(1-z_2)(1-|2m_1+1-2^{j_1+1}z_1|) \\ =&2^{-n-j_1-2}(2^{2n}+2^{j_1+n}-2^{2j_1+1}-2^{2n+1}\varepsilon(m_1)+2^{2j_1+2}(a_{n-j_1}r_1\oplus \sigma_{n-j_1})). \end{align*}
		The rest follows with~\eqref{art2}.
\end{proof}

\begin{lemma}
 We have 
 $$\sum_{\bsj\in \mathcal{J}_6}2^{|\bsj|}\sum_{\bsm\in\DD_{\bsj}}|\mu_{\bsj,\bsm}|^2=\frac19 4^{-2n-3}((3n+11) 4^n-56)-2^{-3n-4}\left(n-1-2\sum_{i=1}^{n-1}\sigma_i-2\sigma_n L\right).$$
\end{lemma}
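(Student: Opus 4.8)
The plan is to substitute the formula for $\mu_{\bsj,\bsm}$ from the preceding Proposition into $\sum_{\bsj\in\mathcal{J}_6}2^{|\bsj|}\sum_{\bsm\in\DD_{\bsj}}|\mu_{\bsj,\bsm}|^2$, to carry out the inner sum over $\bsm$ for each fixed $j_1$, and only afterwards to sum over $j_1$. Since $\bsj=(j_1,-1)$ we have $|\bsj|=j_1$ and $\bsm=(m_1,0)$, so the inner sum runs over $m_1\in\DD_{j_1}$, that is, over the digits $r_1,\dots,r_{j_1}\in\{0,1\}$. Writing $F(m_1)=A+B\varepsilon(m_1)-CD$ with $A=2^{2j_1+1}-2^{j_1+n}$, $B=2^{2n+1}$, $C=2^{2j_1+2}$ and $D=a_{n-j_1}r_1\oplus\sigma_{n-j_1}$, so that $|\mu_{\bsj,\bsm}|^2=2^{-4n-4j_1-6}F(m_1)^2$, I would expand $F(m_1)^2$ into its six terms and evaluate the resulting digit sums one by one.

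All of these sums become elementary after a single reparametrisation of $\varepsilon(m_1)$. For a fixed value of $r_1$ each map $r_k\mapsto r_k\oplus a_{n+1-k}r_1\oplus\sigma_{n+1-k}$ is a bijection of $\{0,1\}$, so the $j_1-1$ free digits $r_2,\dots,r_{j_1}$ parametrise, through $l=\sum_{k=2}^{j_1}(r_k\oplus a_{n+1-k}r_1\oplus\sigma_{n+1-k})2^{k-2}$, all of $\{0,1,\dots,2^{j_1-1}-1\}$; consequently $\varepsilon(m_1)=\frac{r_1\oplus\sigma_n}{2^n}+\frac{l}{2^{n-1}}$ and every sum over $m_1$ splits as $\sum_{r_1=0}^1\sum_{l=0}^{2^{j_1-1}-1}$. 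The sums $\sum_{m_1}1$, $\sum_{m_1}\varepsilon$ and $\sum_{m_1}\varepsilon^2$ then reduce to the standard closed forms for $\sum l$ and $\sum l^2$, whereas the sums carrying the factor $D$ (using $D^2=D$), namely $\sum_{m_1}D=M\sum_{r_1}D$ and $\sum_{m_1}\varepsilon D=\frac{M}{2^n}\sum_{r_1}D\,(r_1\oplus\sigma_n)+\frac{M(M-1)}{2^n}\sum_{r_1}D$ with $M=2^{j_1-1}$, are obtained by summing over $r_1$ after distinguishing $a_{n-j_1}=0$ and $a_{n-j_1}=1$.

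The contribution of the three $D$-free terms $A^2$, $2AB\varepsilon$ and $B^2\varepsilon^2$, after multiplication by $2^{j_1}2^{-4n-4j_1-6}$, is a sum of genuine geometric series in $j_1$ together with a few pieces independent of $j_1$; it is the latter that, summed over $1\le j_1\le n-1$, create the terms linear in $n$. I expect the three $j_1$-independent pieces at scale $4^{-n}$, with coefficients $\tfrac1{48}$, $-\tfrac1{32}$ and $\tfrac1{64}$, to combine to $\tfrac1{192}$, reproducing $\tfrac19 4^{-2n-3}(3n+11)4^n$, and a further $j_1$-independent piece (stemming from $2AB\varepsilon$) to yield the summand $-(n-1)2^{-3n-4}$, while the geometric sums account for the constant $-\tfrac{56}{9}\,4^{-2n-3}$ correction and the $2^{-2n}$-terms. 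This part is routine though lengthy.

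The part I expect to be the main obstacle is the contribution of the terms carrying $D$, because these naively produce position-dependent sums of the form $\sum_{j_1}a_{n-j_1}2^{\alpha j_1}$, which cannot survive in a formula depending only on $L$ and $\sum_i\sigma_i$. The resolution is a cancellation. Collecting the three $D$-terms as $CD(C-2A)-2BC\varepsilon D$ and using the identity $C-2A=2^{j_1+n+1}$, I would show that after summing over $m_1$ the two contributions proportional to $2^{4j_1}$ cancel exactly, leaving $2^{n+3j_1+3}(P-Q)$ with $P=\sum_{r_1}D$ and $Q=\sum_{r_1}D\,(r_1\oplus\sigma_n)$. A short case distinction gives $P-Q=\sigma_{n-j_1}+\sigma_n a_{n-j_1}(1-2\sigma_{n-j_1})$, and multiplying by $2^{j_1}2^{-4n-4j_1-6}$ annihilates the entire $j_1$-dependence, reducing the $D$-contribution to $2^{-3n-3}\sum_{j_1=1}^{n-1}(P-Q)$. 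Reindexing $i=n-j_1$ turns this into $2^{-3n-3}\big(\sum_{i=1}^{n-1}\sigma_i+\sigma_n L\big)$, which is exactly the $\sigma$-dependent part of the claimed identity. Recognising $C-2A=2^{j_1+n+1}$ as the mechanism that collapses the position-dependent sums is the crux; once it is in place the remainder is bookkeeping of geometric and power sums.
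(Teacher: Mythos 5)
Your proposal is correct and follows essentially the same route as the paper: substitute the formula from the preceding proposition, split the sum over $r_1$ (which turns $a_{n-j_1}r_1\oplus\sigma_{n-j_1}$ into the pair $\sigma_{n-j_1}$, $\sigma_{n-j_1}'$), reparametrise the remaining digits $r_2,\dots,r_{j_1}$ by $l\in\{0,\dots,2^{j_1-1}-1\}$ so that $\varepsilon=\tfrac{l}{2^{n-1}}+\tfrac{r_1\oplus\sigma_n}{2^n}$, and evaluate elementary power sums before summing over $j_1$. Your explicit tracking of the cancellation via $C-2A=2^{j_1+n+1}$ and the identity $P-Q=\sigma_{n-j_1}+\sigma_n a_{n-j_1}(1-2\sigma_{n-j_1})$ (all of which I checked and which are correct) is simply a carefully organised version of the bookkeeping the paper leaves implicit.
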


\begin{proof}
 To compute $\sum_{m_2\in\DD_{j_2}}\mu_{\bsj,\bsm}^2$, we first sum over $r_1$, write $\varepsilon=\varepsilon(r_1)$ and find
\begin{align*}
  \sum_{r_2,\dots,r_{j_1}}\mu_{\bsj,\bsm}^2=&\big\{ (2^{-2n-2j_1-3}(2^{2j_1+1}-2^{j_1+n}+2^{2n+1}\varepsilon(0)-2^{2j_1+2}\sigma_{n-j_1}))^2 \\
	&+(2^{-2n-2j_1-3}(2^{2j_1+1}-2^{j_1+n}+2^{2n+1}\varepsilon(1)-2^{2j_1+2}\sigma_{n-j_1}'))^2	\big\}.
\end{align*}
We arrive at the claimed formula by writing $\varepsilon(0)=\frac{l}{2^{n-1}}+\frac{\sigma_n}{2^n}$ and $\varepsilon(1)=\frac{l}{2^{n-1}}+\frac{1-\sigma_n}{2^n}$ 
and by replacing the sum over $r_2,\dots, r_{j_1}$ by a sum over $l$ running from $0$ to $2^{j_1-1}-1$.
\end{proof}

\paragraph{Case 7: $\bsj\in\mathcal{J}_7:=\{(j_1,-1): j_1\geq n\}$}

This case is completely analogous to Case 4.

\begin{proposition}
  Let $\bsj\in \mathcal{J}_7$ and $\bsm\in \DD_{\bsj}$. Then we have
     $$ \mu_{\bsj,\bsm}=-2^{-2j_1-3}. $$
\end{proposition}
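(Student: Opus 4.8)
The plan is to imitate the proof of Case~4 essentially verbatim, exploiting the symmetry between the two coordinates. Since $\bsj=(j_1,-1)$ with $j_1\geq n$, the Haar coefficient is given by~\eqref{art2}, so everything reduces to understanding the counting sum $\sum_{\bsz\in I_{\bsj,\bsm}}(1-|2m_1+1-2^{j_1+1}z_1|)(1-z_2)$. I claim this sum is empty once $j_1\geq n$; granting this, only the linear contribution survives and the asserted formula falls out of~\eqref{art2} at once.

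First I would record the shape of the first coordinates of $\cP_{\bsa}(\vecs)$. As $(t_1,\dots,t_n)$ ranges over $\{0,1\}^n$, the value $z_1=\frac{t_n}{2}+\dots+\frac{t_1}{2^n}$ runs exactly through $\{l/2^n:0\leq l\leq 2^n-1\}$, so every $z_1$ is an integer multiple of $2^{-n}$. The decisive observation is then purely arithmetic: for $j_1\geq n$ we may write $z_1=\frac{l}{2^n}=\frac{l\,2^{j_1-n}}{2^{j_1}}$ with $l\,2^{j_1-n}\in\NN_0$, so that $z_1$ is in fact a multiple of $2^{-j_1}$, i.e.\ an endpoint of a level-$j_1$ dyadic interval. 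Hence no $z_1$ can lie in the interior $\mathring{I}_{j_1,m_1}$ of $I_{j_1,m_1}$.

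With this in hand the conclusion is immediate. By the previous paragraph, the first coordinate of every net point lies outside the interior $\mathring{I}_{j_1,m_1}$, hence either on the boundary of $I_{j_1,m_1}$ or outside it altogether; as observed after formula~\eqref{art4}, the corresponding summand in~\eqref{art2} then vanishes. Thus the counting sum is empty and only the linear part $-t_1t_2$ of the discrepancy function contributes, so the given formula is an immediate consequence of~\eqref{art2}, with $j_1$ and the first coordinate now playing the role of $j_2$ and the second coordinate in Case~4.

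There is essentially no obstacle here: the entire content is the one-line fact that $j_1\geq n$ pins every $z_1$ onto the $2^{-j_1}$-grid, which is precisely what renders the case ``completely analogous to Case~4''. The one point I would verify carefully is the sign of the surviving linear term, by comparing~\eqref{art2} against the direct evaluation $-\int_0^1\!\!\int_0^1 t_1t_2\,h_{j_1,m_1}(t_1)\rd t_1\rd t_2=-\big(\int_0^1 t_1h_{j_1,m_1}(t_1)\rd t_1\big)\cdot\tfrac12$, since a stray sign could otherwise slip in unnoticed.
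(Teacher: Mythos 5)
Your approach is exactly the paper's: the paper disposes of Case 7 by declaring it ``completely analogous to Case 4'', and the proof of Case 4 is precisely your observation that no point lies in the interior of the dyadic box once the level exceeds $n$ (every $z_1$ is a multiple of $2^{-n}$, hence of $2^{-j_1}$ when $j_1\geq n$, so it sits on the boundary of a level-$j_1$ interval and its summand in \eqref{art2} vanishes), leaving only the linear part. However, the sign check you defer at the end is not a formality --- it resolves \emph{against} the stated formula. With the counting sum empty, \eqref{art2} yields $\mu_{\bsj,\bsm}=+2^{-2j_1-3}$, not $-2^{-2j_1-3}$: one has $\int_0^1 t_1 h_{j_1,m_1}(t_1)\rd t_1=-2^{-2j_1-2}$, so the linear part contributes $-\bigl(-2^{-2j_1-2}\bigr)\cdot\tfrac12=+2^{-2j_1-3}$, in agreement with the constant term of \eqref{art2} and with the coordinate-symmetric Case 4, where the paper correctly states $\mu_{\bsj,\bsm}=+2^{-2j_2-3}$. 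The minus sign in this proposition is a typo in the paper (contrast Case 13, where the constant term of \eqref{art4} genuinely is negative); it is harmless downstream, since only $|\mu_{\bsj,\bsm}|^2$ enters Parseval's identity, so the lemma following the proposition and Theorem~\ref{theo1} are unaffected. In short: your argument is correct and is the paper's argument, but your opening claim that the asserted formula ``falls out of \eqref{art2} at once'' should be replaced by the conclusion that the correct value is $+2^{-2j_1-3}$, the stated sign being erroneous.
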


\begin{lemma}
 We have 
 $$\sum_{\bsj\in \mathcal{J}_7}2^{|\bsj|}\sum_{\bsm\in\DD_{\bsj}}|\mu_{\bsj,\bsm}|^2=\frac{1}{48}2^{-2n}.$$
\end{lemma}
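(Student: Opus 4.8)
The plan is to mirror the computation carried out in Case 4, since the roles of the two coordinates are symmetric. By the preceding Proposition the Haar coefficient $\mu_{\bsj,\bsm}=-2^{-2j_1-3}$ is independent of $\bsm$, so the inner sum over $\bsm\in\DD_{\bsj}$ collapses to multiplying the squared coefficient by the cardinality of the index set.

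First I would record the two combinatorial quantities attached to a frequency $\bsj=(j_1,-1)$ with $j_1\geq n$: its level is $|\bsj|=\max\{0,j_1\}+\max\{0,-1\}=j_1$, and the index set $\DD_{\bsj}=\DD_{j_1}\times\DD_{-1}$ has exactly $2^{j_1}\cdot 1=2^{j_1}$ elements. Consequently, for each such $\bsj$,
$$ 2^{|\bsj|}\sum_{\bsm\in\DD_{\bsj}}|\mu_{\bsj,\bsm}|^2=2^{j_1}\cdot 2^{j_1}\cdot\left(2^{-2j_1-3}\right)^2=2^{2j_1}\cdot 2^{-4j_1-6}=2^{-2j_1-6}. $$

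It then remains to sum this expression over all $j_1\geq n$, which is an elementary geometric series with ratio $2^{-2}$:
$$ \sum_{\bsj\in\mathcal{J}_7}2^{|\bsj|}\sum_{\bsm\in\DD_{\bsj}}|\mu_{\bsj,\bsm}|^2=\sum_{j_1=n}^{\infty}2^{-2j_1-6}=2^{-6}\cdot\frac{2^{-2n}}{1-2^{-2}}=\frac{1}{48}2^{-2n}, $$
which is precisely the claimed identity. There is no genuine obstacle here: the whole content lies in correctly bookkeeping the level $|\bsj|$ and the cardinality of $\DD_{\bsj}$, after which the sum is immediate. The only point requiring minor care is that the sign of $\mu_{\bsj,\bsm}$ is irrelevant, since the coefficient is squared before summation, so the result agrees verbatim with Case 4.
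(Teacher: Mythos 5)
Your proposal is correct and follows exactly the paper's approach: the paper treats Case 7 as "completely analogous" to Case 4, where the same computation $\sum_{j\ge n} 2^{j}\cdot 2^{j}\cdot 2^{-4j-6} = \frac{1}{48}2^{-2n}$ is carried out with the roles of $j_1$ and $j_2$ exchanged. Your bookkeeping of the level $|\bsj|=j_1$, the cardinality $|\DD_{\bsj}|=2^{j_1}$, and the geometric series is precisely what the paper does.
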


\paragraph{Case 8: $\bsj\in\mathcal{J}_8:=\{(0,j_2): 0\leq j_2\leq n-2\}$}

\begin{proposition}
  Let $\bsj\in \mathcal{J}_8$ and $\bsm\in \DD_{\bsj}$. Then we have
     $$ \mu_{\bsj,\bsm}=2^{-2j_2-4}\sum_{k=1}^{j_2}a_k\frac{2s_k\oplus \sigma_k-1}{2^{n-k}}+2^{-2n-2}\left(1+2\sigma_{j_2+1}(\sigma_n-1)+2\sigma_n(\sigma_{j_2+1}'-1)\right). $$
\end{proposition}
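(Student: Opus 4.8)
The plan is to apply formula~\eqref{art4} with $j_1=0$ and to evaluate the double sum
$$S:=\sum_{\bsz\in I_{\bsj,\bsm}}(1-|2m_1+1-2z_1|)(1-|2m_2+1-2^{j_2+1}z_2|)$$
explicitly. Since $j_1=0$, condition~\eqref{cond} leaves the first coordinate unrestricted and fixes $b_k=s_k$, i.e.\ $t_k=s_k\oplus a_kt_n\oplus\sigma_k$, for $k\in\{1,\dots,j_2\}$; the free digits are thus $t_{j_2+1},\dots,t_n$, yielding the $2^{n-j_2}$ points expected in a box of volume $2^{-j_2}$. By~\eqref{z1} with $j_1=0$ the first factor equals $1-|1-t_n-u|$, hence $u$ if $t_n=0$ and $1-u$ if $t_n=1$, where $u=\sum_{k=1}^{n-1}t_k2^{-(n-k)}$. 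I split off the constrained digits by writing $u=\tilde u+\varepsilon(t_n)$ with free part $\tilde u=\sum_{k=j_2+1}^{n-1}t_k2^{-(n-k)}$ and fixed part $\varepsilon(t_n)=\sum_{k=1}^{j_2}(s_k\oplus a_kt_n\oplus\sigma_k)2^{-(n-k)}$. By~\eqref{z2} the second factor equals $v$ if $b_{j_2+1}=0$ and $1-v$ if $b_{j_2+1}=1$, where $v=\sum_{k=j_2+2}^{n}b_k2^{-(k-j_2-1)}$ depends only on $t_{j_2+2},\dots,t_n$.

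I would then sum in stages, as in the proofs of Cases~2 and~5. Fixing $t_n$ (hence $\varepsilon(t_n)$, the constant $c:=a_{j_2+1}t_n\oplus\sigma_{j_2+1}$ with $b_{j_2+1}=t_{j_2+1}\oplus c$, and the branch of the first factor), I first sum over $t_{j_2+1}$. The crucial simplification is that $v$ is independent of $t_{j_2+1}$ and the second factor runs through $v$ and $1-v$ as $t_{j_2+1}$ varies, so the part of the first factor that is constant in $t_{j_2+1}$, namely $\tilde u'+\varepsilon(t_n)$ with $\tilde u'=\sum_{k=j_2+2}^{n-1}t_k2^{-(n-k)}$, is simply multiplied by $v+(1-v)=1$; no cross term $\tilde u'v$ survives. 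What remains is $\tilde u'+\varepsilon(t_n)$ together with a term $2^{-(n-j_2-1)}\bigl(cv+(1-c)(1-v)\bigr)$ carrying the familiar factor $c(2v-1)$ of Case~2. Summing over the remaining free digits $t_{j_2+2},\dots,t_{n-1}$ and then over $t_n\in\{0,1\}$, two clean cancellations occur: the contribution of $\sum\tilde u'$ is identical in the two $t_n$-branches and cancels, and the purely constant part $2^{-n-j_2-2}\cdot 2^{n-j_2-2}$ arising from~\eqref{art4} cancels exactly against the correction $-2^{-2j_2-4}$. Consequently $\mu_{\bsj,\bsm}$ reduces to $2^{-n-j_2-2}\bigl(N_0(\varepsilon(0)-\varepsilon(1))+2^{-(n-j_2-1)}(\sum P_0-\sum P_1)\bigr)$, where $N_0=2^{n-j_2-2}$ and $\sum P_0,\sum P_1$ abbreviate the sum of $cv+(1-c)(1-v)$ over the free digits $t_{j_2+2},\dots,t_{n-1}$ for $t_n=0$ and $t_n=1$.

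The $\bsm$-dependent term of the claimed formula then comes from $\varepsilon(0)-\varepsilon(1)=\sum_{k=1}^{j_2}a_k(2(s_k\oplus\sigma_k)-1)2^{-(n-k)}$, since only the indices with $a_k=1$ make $\varepsilon$ depend on $t_n$; multiplying by $2^{-n-j_2-2}N_0=2^{-2j_2-4}$ gives precisely $2^{-2j_2-4}\sum_{k=1}^{j_2}a_k(2(s_k\oplus\sigma_k)-1)2^{-(n-k)}$. The constant term is $2^{-2n-1}(\sum P_0-\sum P_1)$, and its evaluation is the main obstacle: as in Case~2 it requires the values of $\sum v$ over the free digits for $t_n=0,1$ (which bring in $\sigma_n$ through $b_n=t_n\oplus\sigma_n$) and a distinction according to $a_{j_2+1}$, so that $c$ equals $\sigma_{j_2+1}$ for $t_n=0$ and $\sigma_{j_2+1}'=\sigma_{j_2+1}\oplus a_{j_2+1}$ for $t_n=1$. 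This coupling of the two coordinates through $t_n$ is exactly what produces the mixed contributions $\sigma_{j_2+1}(\sigma_n-1)$ and $\sigma_n(\sigma_{j_2+1}'-1)$, and I would finish by verifying that $\sum P_0-\sum P_1=\tfrac12\bigl(1+2\sigma_{j_2+1}(\sigma_n-1)+2\sigma_n(\sigma_{j_2+1}'-1)\bigr)$, which upon multiplication by $2^{-2n-1}$ yields the stated constant term; a numerical cross-check against Warnock's formula for small $n$ guards against the power-of-two and sign slips that are most likely here.
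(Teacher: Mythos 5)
Your proposal is correct and follows essentially the same route as the paper's proof: both apply~\eqref{art4} with $j_1=0$, use~\eqref{cond} to fix $t_k=s_k\oplus a_kt_n\oplus\sigma_k$ for $k\le j_2$, sum in stages over $t_{j_2+1}$ (exploiting that the second factor runs through $v$ and $1-v$ so no cross term with the free part of $z_1$ survives), then over the remaining digits and $t_n$, and finally invoke the identity $\varepsilon(m_2,0)-\varepsilon(m_2,1)=\sum_{k=1}^{j_2}a_k(2(s_k\oplus\sigma_k)-1)2^{-(n-k)}$ together with $\sum v(t_n)$ bringing in $\sigma_n$ via $b_n=t_n\oplus\sigma_n$. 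Your closing identity $\sum P_0-\sum P_1=\tfrac12\bigl(1+2\sigma_{j_2+1}(\sigma_n-1)+2\sigma_n(\sigma_{j_2+1}'-1)\bigr)$ does check out, so the remaining verification is routine arithmetic of exactly the kind the paper also compresses.
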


\begin{proof}
  In this case, the condition $\bsz\in I_{\bsj,\bsm}$ results in $b_k=s_k$ for all $k\in\{1,\dots,j_2\}$ and
	$$ 2m_1+1-2^{j_1+1}z_1=1-2z_1=1-t_n-\dots-2^{-n+1}t_1=1-t_n-u-2^{-n+j_2+1}t_{j_2+1}-\varepsilon(m_2,t_n), $$
	where $u=2^{-1}t_{n-1}+\dots+2^{-n+j_2+2}t_{j_2+2}$ and $\varepsilon(m_2,t_n)=\sum_{k=1}^{j_2}\frac{s_k\oplus a_k t_n \oplus \sigma_k}{2^{n-k}}$.
	Further we have
	$$ 2m_2+1-2^{j_2+1}z_2=1-b_{j_2+1}-\dots-2^{j_2-n+1}b_n=1-t_{j_2+1}\oplus a_{j_2+1}t_n\oplus \sigma_{j_2+1}-v(t_n)-2^{-n+j_2+1}t_n \oplus \sigma_n $$
	with $v(t_n)=\sum_{k=j_2}^{n-1}2^{-k+n}(s_k \oplus a_k t_n \oplus \sigma_k)$. Therefore
	\begin{align*}
	  \sum_{\bsz\in I_{\bsj,\bsm}} &(1-|2m_1+1-2^{j_1+1}z_1|)(1-|2m_2+1-2^{j_2+1}z_2|)\\
		 =&\sum_{t_{j_2+1},\dots,t_n=0}^1(1-|1-t_n-u-2^{-n+j_2+1}t_{j_2+1}+\varepsilon(m_2,t_n)|) \\
		     &\times(1-|1-t_{j_2+1}\oplus a_{j_2+1}t_n\oplus \sigma_{j_2+1}-v(t_n)-2^{-n+j_2+1}t_n \oplus \sigma_n|) \\
		=&\sum_{t_{j_2+2},\dots,t_{n-1}=0}^1\bigg\{\left(u+2^{-n+j_2+1}\sigma_{j_2+1}+\varepsilon(m_2,0)\right)\left(v(0)+2^{-n+j_2++1}\sigma_n\right) \\
		 &+\left(u+2^{-n+j_2+1}(\sigma_{j_2+1}\oplus 1)+\varepsilon(m_2,0)\right)\left(1-v(0)-2^{-n+j_2+1}\sigma_n\right) \\
		&+\left(1-u-2^{-n+j_2+1}\sigma_{j_2+1}'-\varepsilon(m_2,1)\right)\left(v(1)+2^{j_2-n+1}(\sigma_n \oplus 1)\right) \\
		&+\left(1-u-2^{-n+j_2+1}(\sigma_{j_2+1}'\oplus 1)-\varepsilon(m_2,1)\right)\left(1-v(1)-2^{-n+j_2+1}(\sigma_n \oplus 1)\right)\bigg\} \\
		=& \sum_{t_{j_2+2},\dots,t_{n-1}=0}^1 4^{-n}\bigg\{4^{j_2+1}(2\sigma_n(\sigma_{j_2+1}+\sigma_{j_2+1}'-1)-2\sigma_{j_2+1}'+1) \\
		 &\hspace{2cm}+4^n(1+\varepsilon(m_2,0)-\varepsilon(m_2,1)+2^{n+j_2+1}(\sigma_{j_2+1}'-\sigma_{j_2+1})\\
		 &\hspace{2cm} +2^{n+j_2+1}((2\sigma_{j_2+1}-1)v(0)-(2\sigma_{j_2+1}'-1)v(1))\bigg\}.
	\end{align*}
	In the last expression, only $v(0)$ and $v(1)$ depend on the digits $t_{j_2+2},\dots,t_{n-1}$ and we have
	$$ \sum_{t_{j_2+2},\dots,t_{n-1}=0}^1v(0)=\sum_{t_{j_2+2},\dots,t_{n-1}=0}^1 v(1)=\sum_{l=0}^{2^{n-j_2-2}-1}\frac{l}{2^{n-j_2-2}}=2^{n-j_2-3}-\frac12. $$
	Hence, we can compute $\sum_{\bsz\in I_{\bsj,\bsm}} (1-|2m_1+1-2^{j_1+1}z_1|)(1-|2m_2+1-2^{j_2+1}z_2|)$ and the Haar coefficients via~\eqref{art4}. Note that
	$$ \varepsilon(m_2,0)-\varepsilon(m_2,1)=\sum_{k=1}^{j_2}\frac{s_k \oplus \sigma_k-s_k \oplus a_k \oplus \sigma_k}{2^{n-k}}=\sum_{k=1}^{j_2}a_k\frac{2s_k\oplus \sigma_k-1}{2^{n-k}}, $$
	where the relation $s_k \oplus \sigma_k-s_k \oplus a_k \oplus \sigma_k=a_k (2(s_k\oplus \sigma_k)-1)$ can be seen easily.
\end{proof}

\begin{lemma}
 We have 
 $$\sum_{\bsj\in \mathcal{J}_8}2^{|\bsj|}\sum_{\bsm\in\DD_{\bsj}}|\mu_{\bsj,\bsm}|^2=\frac13 4^{-2n-3}(4^n-4)+2^{-2n-8}\sum_{i=0}^{n-2}2^{-2i}\sum_{k=1}^{i}a_k2^{2k}.$$
\end{lemma}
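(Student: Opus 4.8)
The plan is to insert the Haar coefficient from the preceding proposition into the sum, expand the square, and exploit that each summand splits into a genuinely $\bsm$-dependent part and a constant part. Writing
$$ S(\bsm):=\sum_{k=1}^{j_2}a_k\frac{2(s_k\oplus\sigma_k)-1}{2^{n-k}} \quad\text{and}\quad C=C(j_2):=2^{-2n-2}\left(1+2\sigma_{j_2+1}(\sigma_n-1)+2\sigma_n(\sigma_{j_2+1}'-1)\right), $$
we have $\mu_{\bsj,\bsm}=2^{-2j_2-4}S(\bsm)+C$, and hence
$$ \mu_{\bsj,\bsm}^2=2^{-4j_2-8}S(\bsm)^2+2^{-2j_2-3}C\,S(\bsm)+C^2. $$
Since $j_1=0$, the index $m_1$ takes only the value $0$, so $\bsm$ is determined by $m_2$ and $\sum_{\bsm\in\DD_{\bsj}}$ is a sum over $s_1,\dots,s_{j_2}\in\{0,1\}$; moreover $2^{|\bsj|}=2^{j_2}$.

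First I would dispose of the cross term. For each fixed $k$ the factor $2(s_k\oplus\sigma_k)-1$ runs through $+1$ and $-1$ as $s_k$ ranges over $\{0,1\}$, so $\sum_{s_k=0}^1(2(s_k\oplus\sigma_k)-1)=0$; consequently $\sum_{\bsm}S(\bsm)=0$ and the entire cross term vanishes. For the quadratic term I would split $S(\bsm)^2$ into diagonal and off-diagonal parts: every off-diagonal term carries an isolated factor $\sum_{s_{k_1}}(2(s_{k_1}\oplus\sigma_{k_1})-1)=0$ and drops out, while each diagonal term uses $(2(s_k\oplus\sigma_k)-1)^2=1$ and $a_k^2=a_k$, giving $\sum_{\bsm}S(\bsm)^2=2^{j_2}2^{-2n}\sum_{k=1}^{j_2}a_k2^{2k}$.

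The crucial step is to show that the constant contribution is shift-independent, namely $C^2=2^{-4n-4}$ for every admissible $\sigma_{j_2+1},\sigma_n,a_{j_2+1}$. This I would verify by a short case distinction on $a_{j_2+1}\in\{0,1\}$: when $a_{j_2+1}=0$ one has $\sigma_{j_2+1}'=\sigma_{j_2+1}$ and the bracket reduces to $1+4\sigma_{j_2+1}\sigma_n-2\sigma_{j_2+1}-2\sigma_n\in\{+1,-1\}$, while when $a_{j_2+1}=1$ it reduces to $1-2\sigma_{j_2+1}\in\{+1,-1\}$; in both cases its square is $1$. This is the main obstacle, since $C$ looks genuinely shift-dependent and only its square collapses to a constant; everything else is bookkeeping.

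Finally I would assemble the pieces. Multiplying $\sum_{\bsm}\mu_{\bsj,\bsm}^2$ by $2^{j_2}$ produces $2^{-2j_2-8}2^{-2n}\sum_{k=1}^{j_2}a_k2^{2k}+2^{2j_2}C^2$. Summing over $j_2=0,\dots,n-2$, the first part gives exactly $2^{-2n-8}\sum_{i=0}^{n-2}2^{-2i}\sum_{k=1}^{i}a_k2^{2k}$ after relabelling $j_2=i$, while the second part becomes $2^{-4n-4}\sum_{j_2=0}^{n-2}4^{j_2}=2^{-4n-4}\frac{4^{n-1}-1}{3}=\frac13 4^{-2n-3}(4^n-4)$ by the geometric sum formula, which together yield the asserted identity.
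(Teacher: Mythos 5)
Your proof is correct and takes essentially the same route as the paper's: insert the proposition's formula, expand the square, kill the cross term and the off-diagonal quadratic terms via $\sum_{s_k=0}^{1}(2(s_k\oplus\sigma_k)-1)=0$, reduce the diagonal terms with $(2(s_k\oplus\sigma_k)-1)^2=1$ and $a_k^2=a_k$, observe that the constant part squares to $2^{-4n-4}$, and finish with the geometric sum over $j_2$. The only (immaterial) difference is that you verify $C^2=2^{-4n-4}$ by a case distinction on $a_{j_2+1}$, whereas the paper distinguishes on $\sigma_n$; both give the bracket values $\pm 1$.
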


\begin{proof}
  For the sake of brevity we write $$f:=2^{-2n-2}\left(1+2\sigma_{j_2+1}(\sigma_n-1)+2\sigma_n(\sigma_{j_2+1}'-1)\right).$$ Note that $f$ does not depend on $m_2$. Then $\sum_{m_2\in\DD_{\bsj_2}}\mu_{\bsj,\bsm}^2$ equals
	$$ \sum_{m_2\in\DD_{\bsj_2}}\left\{ 2^{-4j_2-8}\left(\sum_{k=1}^{j_2}a_k\frac{2(s_k\oplus \sigma_k)-1}{2^{n-k}}\right)^2+ 2^{-2j_2-3}f\sum_{k=1}^{j_2}a_k\frac{2(s_k\oplus \sigma_k)-1}{2^{n-k}}+f^2 \right\}. $$
	Since
	$$ \sum_{s_1,\dots,s_{j_2}=0}^1\sum_{k=1}^{j_2}a_k\frac{2(s_k\oplus \sigma_k)-1}{2^{n-k}}=\sum_{k=1}^{j_2}\frac{a_k}{2^{n-k}}2^{j_2-1}\sum_{s_k=0}^1 (2(s_k \oplus \sigma_k)-1)=0 $$
	and
	\begin{align*}
	  \sum_{s_1,\dots,s_{j_2}=0}^1&\left(\sum_{k=1}^{j_2}a_k\frac{2(s_k\oplus \sigma_k)-1}{2^{n-k}}\right)^2 \\
		     =&\sum_{s_1,\dots,s_{j_2}=0}^1 \bigg( \sum_{\substack{k_1,k_2=1\\ k_1\neq k_2}}^{j_2}a_{k_1}a_{k_2}\frac{(2(s_{k_1}\oplus\sigma_{k_1})-1)(2(s_{k_2}\oplus\sigma_{k_2})-1)}{2^{n-k_1}2^{n-k_2}}
				 \\ &+\sum_{k=1}^{j_2}\frac{(2(s_k\oplus\sigma_k)-1)^2}{2^{2n-2k}} \bigg) \\
				=& \sum_{\substack{k_1,k_2=1\\ k_1\neq k_2}}^{j_2}\frac{a_{k_1}a_{k_2}}{2^{n-k_1}2^{n-k_2}}2^{j_2-2}\sum_{s_{k_1},s_{k_2}=0}^1(2(s_{k_1}\oplus\sigma_{k_1})-1)(2(s_{k_2}\oplus\sigma_{k_2})-1) \\
				&+ \sum_{k=1}^{j_2} \frac{a_k}{2^{2n-2k}}2^{j_2-1}\sum_{s_k=0}^1 (2(s_k \oplus \sigma_k)-1)^2=2^{j_2-2n}\sum_{k=1}^{j_2}a_k2^{2k}.
	\end{align*}
	This yields
	$$ \sum_{m_2\in\DD_{\bsj_2}}\mu_{\bsj,\bsm}^2=2^{-3j_2-2n-8}\sum_{k=1}^{j_2}a_k2^{2k}+2^{j_2}f^2. $$
	Note that $1+2\sigma_{j_2+1}(\sigma_n-1)+2\sigma_n(\sigma_{j_2+1}'-1)=1-2\sigma_{j_2+1}$ if $\sigma_n=0$ and $1+2\sigma_{j_2+1}(\sigma_n-1)+2\sigma_n(\sigma_{j_2+1}'-1)=2\sigma_{j_2+1}'-1$ if $\sigma_n=1$; thus $(1+2\sigma_{j_2+1}(\sigma_n-1)+2\sigma_n(\sigma_{j_2+1}'-1))^2=1$ in any case and $f^2=2^{-4n-4}$. After summation over $j_2$ we obtain the result.
\end{proof}

\paragraph{Case 9: $\bsj\in\mathcal{J}_{9}:=\{(j_1,j_2)\in \NN_0^2: j_1+j_2\leq n-2, j_1\geq 1 \}$}

\begin{proposition}
  Let $\bsj\in \mathcal{J}_{9}$ and $\bsm\in \DD_{\bsj}$. Then we have
     $$ \mu_{\bsj,\bsm}=2^{-2n-2}(2(a_{n-j_1}r_1\oplus \sigma_{j_2+1})-1)(2(a_{j_2+1}r_1\oplus \sigma_{n-j_1})-1). $$
\end{proposition}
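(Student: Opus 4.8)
The plan is to apply the general formula~\eqref{art4} for $\bsj=(j_1,j_2)$ with $j_1,j_2\in\NN_0$, so that the only real work is to evaluate the sum $S:=\sum_{\bsz\in I_{\bsj,\bsm}}(1-|2m_1+1-2^{j_1+1}z_1|)(1-|2m_2+1-2^{j_2+1}z_2|)$. First I would use the membership condition~\eqref{cond} to pin down the digits of a point $\bsz\in I_{\bsj,\bsm}$: the equalities $t_{n+1-k}=r_k$ ($k\le j_1$) fix $t_n=r_1,\dots,t_{n-j_1+1}=r_{j_1}$, while $b_k=s_k$ ($k\le j_2$) together with $t_n=r_1$ fix $t_1,\dots,t_{j_2}$. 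Hence the free digits are exactly $t_{j_2+1},\dots,t_{n-j_1}$, and since $j_1+j_2\le n-2$ there are at least two of them. Using~\eqref{z1} and~\eqref{z2} I would write the two tent factors as $\phi_1=w+t_{n-j_1}(1-2w)$ and $\phi_2=w'+b_{j_2+1}(1-2w')$, where $w=2^{-1}t_{n-j_1-1}+\dots+2^{j_1-n+1}t_1$ and $w'=2^{-1}b_{j_2+2}+\dots+2^{j_2-n+1}b_n$; thus $t_{n-j_1}$ is the sign digit of $\phi_1$, and $t_{j_2+1}$ (via $b_{j_2+1}=t_{j_2+1}\oplus a_{j_2+1}r_1\oplus\sigma_{j_2+1}$) is the sign digit of $\phi_2$.

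The key structural observation, and the heart of the argument, is that the two tents couple at exactly one scale. Writing $\delta=2^{-(n-j_1-j_2-1)}$, the sign digit $t_{j_2+1}$ of $\phi_2$ enters $w$ only at the least significant free position, so $w=w_0+\delta\,t_{j_2+1}$ with $w_0$ independent of $t_{j_2+1}$ and $t_{n-j_1}$; symmetrically $t_{n-j_1}$ enters $w'$ only through $b_{n-j_1}=t_{n-j_1}\oplus a_{n-j_1}r_1\oplus\sigma_{n-j_1}$, again at position $\delta$, so $w'=w_0'+\delta\,b_{n-j_1}$. I would then carry out the four-term sum of $\phi_1\phi_2$ over the two sign digits $t_{n-j_1},t_{j_2+1}\in\{0,1\}$ with the remaining (middle) digits held fixed. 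Summing first over $t_{n-j_1}$ and then over $t_{j_2+1}$, the diagonal contribution collapses to the constant $1$ (each tent averages to $1/2$ over its own sign digit), and the only survivor is a quadratic cross term. Setting $c=a_{j_2+1}r_1\oplus\sigma_{j_2+1}$ and $d=a_{n-j_1}r_1\oplus\sigma_{n-j_1}$, a short computation gives $\sum_{t_{n-j_1},t_{j_2+1}}\phi_1\phi_2=1+\delta^2(1-2c)(1-2d)$. The crucial point is that $w_0$ and $w_0'$ drop out completely, so this value does not depend on the middle digits $t_{j_2+2},\dots,t_{n-j_1-1}$ at all; this is exactly the feature of a $(0,n,2)$-net that produces such clean coefficients.

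Having this, the remaining sum over the $2^{n-j_1-j_2-2}$ configurations of the middle digits is immediate because the summand is constant, giving $S=2^{n-j_1-j_2-2}\bigl(1+\delta^2(1-2c)(1-2d)\bigr)=2^{n-j_1-j_2-2}+2^{j_1+j_2-n}(1-2c)(1-2d)$. Inserting this into~\eqref{art4}, the main term $2^{-n-j_1-j_2-2}\cdot 2^{n-j_1-j_2-2}$ cancels exactly against $2^{-2j_1-2j_2-4}$, leaving $\mu_{\bsj,\bsm}=2^{-2n-2}(1-2c)(1-2d)$. Finally I would note that $(1-2c)(1-2d)=(-1)^{c\oplus d}$ depends on $c\oplus d=(a_{j_2+1}\oplus a_{n-j_1})r_1\oplus\sigma_{j_2+1}\oplus\sigma_{n-j_1}$ only, which is symmetric under interchanging the roles of the two $a$-indices; hence $(1-2c)(1-2d)$ equals $(2(a_{n-j_1}r_1\oplus\sigma_{j_2+1})-1)(2(a_{j_2+1}r_1\oplus\sigma_{n-j_1})-1)$, yielding the stated formula. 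The main obstacle is bookkeeping the overlap of the two tents: one must check that the two sign digits are genuinely distinct free digits (guaranteed by $j_1+j_2\le n-2$) and that they couple only at the single scale $\delta$, which is what makes the four-term sum collapse and eliminates all dependence on the middle digits.
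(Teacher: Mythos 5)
Your proposal is correct and follows essentially the same route as the paper: fix the middle digits, sum the product of the two tent factors over the two coupled sign digits $t_{n-j_1}$ and $t_{j_2+1}$ (obtaining $1+4^{-n+j_1+j_2+1}(1-2c)(1-2d)$ independently of the middle digits), multiply by $2^{n-j_1-j_2-2}$, and insert into~\eqref{art4}, where the main terms cancel. Your closing parity argument reconciling the ``natural'' pairing $c=a_{j_2+1}r_1\oplus\sigma_{j_2+1}$, $d=a_{n-j_1}r_1\oplus\sigma_{n-j_1}$ with the ``crossed'' pairing in the stated formula is a valid and welcome clarification of a step the paper passes over with ``after lengthy calculations''.
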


\begin{proof}
   The condition $\bsz\in I_{\bsj,\bsm}$ yields, by~\eqref{cond},~\eqref{z1} and~\eqref{z2}, that
	$$ 2m_1+1-2^{j_1+1}z_1=1-t_{n-j_1}-u-2^{j_1+j_2-n+1}t_{j_2+1}-\varepsilon_1 $$
	with $u=2^{-1}t_{n-j_1-1}+\dots+2^{j_1+j_2-n}t_{j_2+1}$ and 
	$$ \varepsilon_1=2^{j_1-n+1}\sum_{k=1}^{j_2} 2^{k-1}t_k=2^{j_1-n+1}\sum_{k=1}^{j_2} 2^{k-1}(s_k\oplus a_kr_1 \oplus \sigma_k)=\varepsilon_1(\bsm). $$
	Similarly, we write
	$$ 2m_2+1-2^{j_2+1}z_2=1-b_{j_2+1}-v-2^{j_1+j_2-n+1}b_{n-j_1}-\varepsilon_2 $$
	with $v=2^{-1}b_{j_2+2}+\dots+2^{j_1+j_2-n}b_{n-j_1-1}$ and
	$$ \varepsilon_2=2^{j_2-n+1}\sum_{k=1}^{j_1} 2^{k-1}b_{n+1-k}=2^{j_2-n+1}\left(r_1\oplus \sigma_n+\sum_{k=1}^{j_2} 2^{k-1}(r_k\oplus a_{n+1-k}r_1 \oplus \sigma_{n+1-k})\right)=\varepsilon_2(\bsm). $$	
	We fix the digits $t_{j_2+2},\dots,t_{n-j_1-1}$; then $u$ and $v$ are also fixed. We sum $$(1-|2m_1+1-2^{j_1+1}z_1|)(1-|2m_2+1-2^{j_2+1}z_2|)$$
	over $t_{n-j_1}\in\{0,1\}$ and $t_{j_2+2}\in\{0,1\}=\{a_{j_2+1}r_1\oplus \sigma_{j_2+1},a_{j_2+1}r_1\oplus \sigma_{j_2+1}\oplus 1\}$ and  find after lengthy calculations
	\begin{align*} \sum_{t_{j_2+1},t_{n-j_1}=0}^1& (1-|1-t_{n-j_1}-u-2^{j_1+j_2-n+1}t_{j_2+1}-\varepsilon_1|)\\ &\times(1-|1-b_{j_2+1}-v-2^{j_1+j_2-n+1}b_{n-j_1}-\varepsilon_2|)  \\
	   =& 1+4^{-n+j_1+j_2+1} (2(a_{n-j_1}r_1\oplus \sigma_{j_2+1})-1)(2(a_{j_2+1}r_1\oplus \sigma_{n-j_1})-1).
	\end{align*}
	Summation over the remaining digits $t_{j_2+2},\dots,t_{n-j_1-1}$ yields
	\begin{align*} \sum_{\bsz\in I_{\bsj,\bsm}}&(1-|2m_1+1-2^{j_1+1}z_1|)(1-|2m_2+1-2^{j_2+1}z_2|) \\
	   =&2^{n-j_1-j_2-2}+2^{-n+j_1+j_2}(2(a_{n-j_1}r_1\oplus \sigma_{j_2+1})-1)(2(a_{j_2+1}r_1\oplus \sigma_{n-j_1})-1), \end{align*}
	and the result follows by~\eqref{art4}.
\end{proof}
Since $\mu_{\bsj,\bsm}^2=2^{-4n-4}$ is independent of $\bsj$ and $\bsm$ in this case, the following consequence is straightforward.
\begin{lemma}
 We have 
 $$\sum_{\bsj\in \mathcal{J}_{9}}2^{|\bsj|}\sum_{\bsm\in\DD_{\bsj}}|\mu_{\bsj,\bsm}|^2=\frac19 4^{-2n-3} \left(3n 4^n-7\cdot 4^n+16\right).$$
\end{lemma}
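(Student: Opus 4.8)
The plan is to exploit the uniformity of the Haar coefficient supplied by the preceding Proposition. Since both arguments $a_{n-j_1}r_1\oplus \sigma_{j_2+1}$ and $a_{j_2+1}r_1\oplus \sigma_{n-j_1}$ lie in $\{0,1\}$, each factor $2(\cdot)-1$ equals $\pm 1$, so that $\mu_{\bsj,\bsm}^2=2^{-4n-4}$ holds identically in $\bsj$ and $\bsm$. This reduces the entire computation to a counting problem. First I would pull the constant out of the double sum to write
$$ \sum_{\bsj\in \mathcal{J}_{9}}2^{|\bsj|}\sum_{\bsm\in\DD_{\bsj}}|\mu_{\bsj,\bsm}|^2 = 2^{-4n-4}\sum_{\bsj\in \mathcal{J}_{9}}2^{|\bsj|}\,|\DD_{\bsj}|. $$

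For $\bsj=(j_1,j_2)$ with $j_1,j_2\in\NN_0$ we have $|\bsj|=j_1+j_2$ and $|\DD_{\bsj}|=2^{j_1}2^{j_2}=2^{j_1+j_2}$, so that the summand collapses to $4^{j_1+j_2}$. The next step is to reindex by the level $s:=j_1+j_2$. For each fixed $s$, the constraints $j_1\geq 1$, $j_2\geq 0$, $j_1+j_2=s$ admit exactly $s$ pairs (namely $j_1\in\{1,\dots,s\}$), while the defining inequality of $\mathcal{J}_9$ restricts $s$ to $\{1,\dots,n-2\}$. The remaining task is therefore to evaluate the weighted geometric sum $\sum_{s=1}^{n-2} s\,4^s$.

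I would close the argument by applying the standard closed form $\sum_{s=1}^{N}s\,x^{s}=x\bigl(1-(N+1)x^{N}+N\,x^{N+1}\bigr)/(1-x)^2$ with $x=4$ and $N=n-2$, collecting the terms proportional to $4^{n-2}$ to produce the factor $3n-7$, and then multiplying back by the prefactor $2^{-4n-4}=4^{-2n-2}$. A final regrouping of the constant, rewriting $4^{-2n-2}$ as $4\cdot 4^{-2n-3}$, turns $4+4^{n-1}(3n-7)$ into $4^{-2n-3}\bigl(16+4^{n}(3n-7)\bigr)$ and yields precisely $\tfrac19\,4^{-2n-3}\bigl(3n\,4^n-7\cdot 4^n+16\bigr)$.

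The only place demanding care is the bookkeeping of the weighted sum: fixing the range of the level index $s$ correctly (the condition $j_1\geq 1$ annihilates the $s=0$ term and yields the multiplicity $s$ rather than $s+1$), and then matching the three constants $3n\,4^n$, $-7\cdot 4^n$ and $16$ after clearing the denominator. There is no genuine analytic obstacle here; the entire difficulty of the lemma has already been absorbed into the preceding Proposition, whose product of two $\pm1$ factors is exactly what makes the squared coefficient constant.
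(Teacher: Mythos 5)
Your proposal is correct and takes essentially the same route as the paper: the paper's entire proof is the remark that $\mu_{\bsj,\bsm}^2=2^{-4n-4}$ is independent of $\bsj$ and $\bsm$ on $\mathcal{J}_9$, after which the lemma is called straightforward. Your reduction to counting $\sum_{\bsj\in\mathcal{J}_9}4^{j_1+j_2}$ by the level $s=j_1+j_2$ (with multiplicity $s$ for $s=1,\dots,n-2$) and the weighted geometric sum simply fills in the bookkeeping the paper omits, and the arithmetic checks out.
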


\paragraph{Case 10: $\bsj\in\mathcal{J}_{10}:=\{(0,n-1)\}$}

\begin{proposition}
  Let $\bsj\in \mathcal{J}_{10}$ and $\bsm\in \DD_{\bsj}$. Then we have
     \begin{align*} \mu_{\bsj,\bsm}=&\frac{1}{2^{2n+2}}\left(1-2\left|\sigma_n-\sum_{k=1}^{n-1}\frac{s_k \oplus a_k(\sigma_n\oplus 1) \oplus \sigma_k}{2^{n-k}}\right|\right).
		 \end{align*}
\end{proposition}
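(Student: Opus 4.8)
The plan is to specialize the general formula \eqref{art4} to $j_1 = 0$ and $j_2 = n-1$, for which the two prefactors become $2^{-n-j_1-j_2-2} = 2^{-2n-1}$ and $2^{-2j_1-2j_2-4} = 2^{-2n-2}$. First I would simplify the two factors appearing under the sum. For $j_1 = 0$ relation \eqref{z1} gives $2m_1 + 1 - 2z_1 = 1 - t_n - u$ with $u = 2^{-1}t_{n-1} + \dots + 2^{-n+1}t_1 = \sum_{k=1}^{n-1}t_k 2^{-(n-k)}$, while for $j_2 = n-1$ relation \eqref{z2} collapses to $2m_2 + 1 - 2^n z_2 = 1 - b_n$, so that $1 - |2m_2 + 1 - 2^{j_2+1}z_2| = b_n = t_n \oplus \sigma_n$, exactly as in the Case 3 computation.

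Next I would identify the free digits. The membership condition \eqref{cond} forces $b_k = s_k$, i.e.\ $t_k = s_k \oplus a_k t_n \oplus \sigma_k$, for all $k \in \{1, \dots, n-1\}$, and since $j_1 = 0$ contributes no constraint on the first coordinate, the digit $t_n$ is the only free variable; the box thus contains exactly the two net points indexed by $t_n \in \{0, 1\}$. Writing $w(t_n) := u = \sum_{k=1}^{n-1}(s_k \oplus a_k t_n \oplus \sigma_k)2^{-(n-k)}$, I would note $0 \le w(t_n) < 1$, so the remaining absolute value resolves without any further case split in $u$: for $t_n = 0$ one gets $1 - |1 - w(0)| = w(0)$, and for $t_n = 1$ one gets $1 - |{-w(1)}| = 1 - w(1)$. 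Multiplying by the factor $t_n \oplus \sigma_n$ and summing over $t_n \in \{0, 1\}$ yields $S := \sigma_n\, w(0) + (1 \oplus \sigma_n)(1 - w(1))$.

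Finally I would insert $S$ into \eqref{art4}, obtaining $\mu_{\bsj,\bsm} = 2^{-2n-1}S - 2^{-2n-2} = 2^{-2n-2}(2S - 1)$, and reach the stated closed form by verifying the identity $S = 1 - |\sigma_n - P|$ with $P := \sum_{k=1}^{n-1}(s_k \oplus a_k(\sigma_n \oplus 1) \oplus \sigma_k)2^{-(n-k)}$. This I would check by distinguishing the two values of $\sigma_n$: for $\sigma_n = 0$ only the second summand of $S$ survives, $P = w(1)$, and $1 - |\sigma_n - P| = 1 - w(1) = S$; for $\sigma_n = 1$ only the first summand survives, $P = w(0)$, and $1 - |\sigma_n - P| = 1 - (1 - w(0)) = w(0) = S$. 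The main obstacle is precisely this repackaging: the two per-case expressions are routine, but recognizing that the switch $a_k(\sigma_n \oplus 1)$ selects $w(1)$ when $\sigma_n = 0$ and $w(0)$ when $\sigma_n = 1$, and that both cases are captured by the single absolute value $|\sigma_n - P|$, is the step that demands care.
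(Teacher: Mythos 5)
Your proposal is correct and follows essentially the same route as the paper: specialize \eqref{art4} to $(j_1,j_2)=(0,n-1)$, use \eqref{cond} to fix $t_1,\dots,t_{n-1}$ in terms of the $s_k$ and the single free digit $t_n$, note that the second factor is $b_n=t_n\oplus\sigma_n$, and repackage the surviving term into the absolute-value form. The only cosmetic difference is that the paper observes immediately that the factor $t_n\oplus\sigma_n$ kills the $t_n=\sigma_n$ term (so the sum collapses to the single point with $t_n=\sigma_n\oplus 1$, using $1-(\sigma_n\oplus 1)=\sigma_n$), whereas you keep both summands and verify the identity $S=1-|\sigma_n-P|$ by a final case split on $\sigma_n$ --- the computations are identical in substance.
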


\begin{proof}
  We have $1-|2m_1+1-2z_1|=1-\left|1-t_n-\sum_{k=1}^{n-1}\frac{s_k\oplus a_kt_n \oplus \sigma_k}{2^{n-k}}\right|$ and $1-|2m_2+1-2^{n}z_2|=1-|1-b_n|=b_n$ and 
	therefore
	\begin{align*} \sum_{\bsz\in I_{\bsj,\bsm}}&(1-|2m_1+1-2z_1|)(1-|2m_2+1-2^{n}z_2|) \\
	   =&\sum_{t_n=0}^{1}\left(1-\left|1-t_n-\sum_{k=1}^{n-1}\frac{s_k\oplus a_kt_n \oplus \sigma_k}{2^{n-k}}\right|\right)(t_n \oplus \sigma_n) \\
		 =& 1-\left|1-\sigma_n\oplus 1-\sum_{k=1}^{n-1}\frac{s_k\oplus a_k(\sigma_n \oplus 1) \oplus \sigma_k}{2^{n-k}}\right|;
	\end{align*}
	the rest of the proof is straightforward by~\eqref{art4}.
\end{proof}

\begin{lemma}
 We have 
 $$\sum_{\bsj\in \mathcal{J}_{10}}2^{|\bsj|}\sum_{\bsm\in\DD_{\bsj}}|\mu_{\bsj,\bsm}|^2=\frac{1}{3}2^{-4n-6}\left(2^{2n}+8\right).$$
\end{lemma}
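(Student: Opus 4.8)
The plan is to exploit that $\mathcal{J}_{10}=\{(0,n-1)\}$ is a singleton, so the outer sum collapses to one term with $|\bsj|=\max\{0,0\}+\max\{0,n-1\}=n-1$, hence $2^{|\bsj|}=2^{n-1}$. Here $m_1=0$ is forced since $\DD_0=\{0\}$, while $m_2$ ranges over $\DD_{n-1}=\{0,\dots,2^{n-1}-1\}$; equivalently, by~\eqref{mdyadic} the digits $(s_1,\dots,s_{n-1})$ range over all of $\{0,1\}^{n-1}$. Thus everything reduces to summing the square of the Haar coefficient from the preceding proposition over these $2^{n-1}$ digit strings.

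The key simplification I would make is to set $c_k:=s_k\oplus a_k(\sigma_n\oplus 1)\oplus\sigma_k$ and observe that, for each fixed $k$, the map $s_k\mapsto c_k$ is XOR with a constant bit, hence a bijection of $\{0,1\}$; therefore $(s_1,\dots,s_{n-1})\mapsto(c_1,\dots,c_{n-1})$ is a bijection of $\{0,1\}^{n-1}$. Consequently, as $\bsm$ ranges over $\DD_{\bsj}$, the quantity $\sum_{k=1}^{n-1}c_k 2^{-(n-k)}=2^{-(n-1)}\sum_{k=1}^{n-1}c_k 2^{k-1}$ takes each value $l/2^{n-1}$, $l=0,1,\dots,2^{n-1}-1$, exactly once, replacing the sum over digit strings by a clean sum over $l$. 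Next I would remove the dependence on $\sigma_n$: since $0\le l/2^{n-1}<1$, a short case distinction gives $|\sigma_n-l/2^{n-1}|=l/2^{n-1}$ for $\sigma_n=0$ and $=1-l/2^{n-1}$ for $\sigma_n=1$, so the two resulting linear expressions differ only by sign and $\bigl(1-2|\sigma_n-l/2^{n-1}|\bigr)^2=\bigl(1-2l/2^{n-1}\bigr)^2$ in either case. This explains why the claimed formula carries no $\sigma_n$.

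It then remains to evaluate
\[
\sum_{\bsj\in\mathcal{J}_{10}}2^{|\bsj|}\sum_{\bsm\in\DD_{\bsj}}|\mu_{\bsj,\bsm}|^2
=2^{n-1}\cdot\frac{1}{2^{4n+4}}\sum_{l=0}^{M-1}\Bigl(1-\frac{2l}{M}\Bigr)^2,\qquad M:=2^{n-1}.
\]
Expanding the square and inserting the standard formulas for $\sum_{l=0}^{M-1}l$ and $\sum_{l=0}^{M-1}l^2$ yields $\sum_{l=0}^{M-1}(1-2l/M)^2=(M^2+2)/(3M)$; substituting $M=2^{n-1}$ and using $2^{2n-2}+2=2^{-2}(2^{2n}+8)$ produces exactly $\tfrac13 2^{-4n-6}(2^{2n}+8)$. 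The computation is entirely routine; the only points needing a little care are the case analysis on $\sigma_n$, which is where the absolute value could otherwise cause trouble, and the bookkeeping of the powers of two in the closing arithmetic.
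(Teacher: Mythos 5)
Your proposal is correct and follows essentially the same route as the paper's proof: both reduce the sum over $\bsm\in\DD_{\bsj}$ to a sum over $l=0,\dots,2^{n-1}-1$ of $\bigl(1-2l/2^{n-1}\bigr)^2$ via the digit bijection, observe that the cases $\sigma_n=0$ and $\sigma_n=1$ yield the same expression, and finish with routine summation. Your write-up merely makes explicit the bijection and the case analysis that the paper leaves implicit.
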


\begin{proof}
   In both cases $\sigma_n=0$ and $\sigma_n=1$ we find
	  $$ 2^{n-1}\sum_{m_2=0}^{2^{n-1}-1}\mu_{\bsj,\bsm}^2=2^{n-1}\frac{1}{2^{4n+4}}\sum_{l=0}^{2^{n-1}-1}\left(1-2\frac{l}{2^{n-1}}\right)^2, $$
		which yields the claim.
\end{proof}

\paragraph{Case 11: $\bsj\in\mathcal{J}_{11}:=\{(j_1,j_2)\in \NN_0^2: j_1+j_2= n-1, j_1\geq 1\}$}

\begin{proposition}
  Let $\bsj\in \mathcal{J}_{11}$ and $\bsm\in \DD_{\bsj}$. Then we have 
     \begin{align*} \mu_{\bsj,\bsm}=&2^{-2n-1}\Bigg\{\left(1-\left|1-a_{j_2+1}r_1\oplus \sigma_{j_2+1}-\sum_{k=2}^{j_1}\frac{r_k\oplus a_{n+1-k}r_1\oplus \sigma_{n+1-k}}{2^{j_1-k+1}}-\frac{r_1\oplus\sigma_n}{2^{j_1}}\right|\right) \\
		       &\times \left(\sum_{k=1}^{j_2}\frac{s_k\oplus a_kr_1 \oplus \sigma_k}{2^{j_2-k+1}}\right) \\
					&+ \left(1-\left|1-1\oplus a_{j_2+1}r_1\oplus \sigma_{j_2+1}-\sum_{k=2}^{j_1}\frac{r_k\oplus a_{n+1-k}r_1\oplus \sigma_{n+1-k}}{2^{j_1-k+1}}-\frac{r_1\oplus\sigma_n}{2^{j_1}}\right|\right)\\
		      &\times \left(1-\sum_{k=1}^{j_2}2^{k-1-j_2}\frac{s_k\oplus a_kr_1 \oplus \sigma_k}{2^{j_2-k+1}}\right)\Bigg\}-2^{-2n-2}.
		 \end{align*}
\end{proposition}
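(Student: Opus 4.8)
The plan is to specialize the general Haar-coefficient formula~\eqref{art4} to the diagonal strip $j_1+j_2=n-1$. Here both prefactors collapse, since $2^{-n-j_1-j_2-2}=2^{-2n-1}$ and $2^{-2j_1-2j_2-4}=2^{-2n-2}$, so the entire task reduces to evaluating the single sum $\sum_{\bsz\in I_{\bsj,\bsm}}(1-|2m_1+1-2^{j_1+1}z_1|)(1-|2m_2+1-2^{j_2+1}z_2|)$ and then multiplying by $2^{-2n-1}$ and subtracting $2^{-2n-2}$.

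First I would determine which points of $\cP_{\bsa}(\vecs)$ lie in the box. By~\eqref{cond}, membership forces $t_{n+1-k}=r_k$ for $k=1,\dots,j_1$ and $b_k=s_k$ for $k=1,\dots,j_2$. The choice $k=1$ in the first block already fixes $t_n=r_1$, so using $b_k=t_k\oplus a_kt_n\oplus\sigma_k$ the second block becomes $t_k=s_k\oplus a_kr_1\oplus\sigma_k$ for $k=1,\dots,j_2$. Because $j_1+j_2=n-1$, the first block pins down the top digits $t_{n-j_1+1}=t_{j_2+2},\dots,t_n$ and the second block the bottom digits $t_1,\dots,t_{j_2}$; these blocks are disjoint and leave exactly the single digit $t_{j_2+1}=t_{n-j_1}$ free. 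Thus each such box contains precisely two points, matching its volume $2^{-(n-1)}$.

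Next I would expand the two factors via~\eqref{z1} and~\eqref{z2}. In the first coordinate every digit below $t_{n-j_1}$ is fixed, so~\eqref{z1} reads $2m_1+1-2^{j_1+1}z_1=1-t_{j_2+1}-\varepsilon_1$ with $\varepsilon_1:=\sum_{k=1}^{j_2}\frac{s_k\oplus a_kr_1\oplus\sigma_k}{2^{j_2-k+1}}\in[0,1)$, which is exactly the product's ``$\varepsilon_1$'' factor in the statement. In the second coordinate~\eqref{z2} gives $2m_2+1-2^{j_2+1}z_2=1-b_{j_2+1}-\varepsilon_2$, where $b_{j_2+1}=t_{j_2+1}\oplus a_{j_2+1}r_1\oplus\sigma_{j_2+1}$ is the only term depending on the free digit; rewriting the fixed high digits $b_{j_2+2},\dots,b_{n-1}$ through $t_{n+1-k}=r_k$ and setting $b_n=r_1\oplus\sigma_n$ turns $\varepsilon_2$ into $\sum_{k=2}^{j_1}\frac{r_k\oplus a_{n+1-k}r_1\oplus\sigma_{n+1-k}}{2^{j_1-k+1}}+\frac{r_1\oplus\sigma_n}{2^{j_1}}$, precisely the quantity appearing inside the absolute values.

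Finally I would evaluate the product at the two free choices and add them. Since $\varepsilon_1\in[0,1)$, the first factor $1-|1-t_{j_2+1}-\varepsilon_1|$ equals $\varepsilon_1$ for $t_{j_2+1}=0$ and $1-\varepsilon_1$ for $t_{j_2+1}=1$; the second factor keeps its absolute value, as I would not split on the bit $a_{j_2+1}r_1\oplus\sigma_{j_2+1}$ (respectively its complement). Summing the $t_{j_2+1}=0$ and $t_{j_2+1}=1$ contributions reproduces the bracketed two-term expression, and inserting it into~\eqref{art4} with the collapsed constants yields the stated formula. The hard part will be the index bookkeeping: verifying that the first coordinate constrains the \emph{top} $j_1$ digits of $z_1$ while the condition on the top $j_2$ digits of $z_2$ translates, through $b_k=t_k\oplus a_kt_n\oplus\sigma_k$ and $t_n=r_1$, into constraints on the \emph{bottom} $t$-digits, that the two constraint blocks are disjoint, and that they leave exactly one free digit; a secondary care point is confirming $\varepsilon_1\in[0,1)$ so that its absolute value resolves cleanly, whereas the second absolute value must be retained.
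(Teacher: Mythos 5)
Your proposal is correct and takes essentially the same route as the paper's own (much terser) proof: the membership condition~\eqref{cond} fixes every digit except $t_{j_2+1}=t_{n-j_1}$, and summing the product in~\eqref{art4} over the two values of this free digit --- with the $z_1$-factor resolved via $\varepsilon_1\in[0,1)$ and the $z_2$-factor's absolute value retained --- yields the stated formula. Your identification of the collapsed constants $2^{-n-j_1-j_2-2}=2^{-2n-1}$ and $2^{-2j_1-2j_2-4}=2^{-2n-2}$ and of the disjoint digit blocks is accurate.
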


\begin{proof}
 By the condition $\bsz\in I_{\bsj,\bsm}$ all digits but $t_{j_2+1}=t_{n-j_1}$ are fixed. Hence, we get the result by summing $(1-|2m_1+1-2^{j_1+1}z_1|)(1-|2m_2+1-2^{j_2+1}z_2|)$
over the two possibilities $t_{j_2+1}=0,1$ and expressing the other digits of $z_1$ and $z_2$ in terms of the digits $r_i$ and $s_j$ of $m_1$ and $m_2$ according to~\eqref{cond}.
\end{proof}

\begin{lemma}
 We have 
 $$\sum_{\bsj\in \mathcal{J}_{11}}2^{|\bsj|}\sum_{\bsm\in\DD_{\bsj}}|\mu_{\bsj,\bsm}|^2=\frac{1}{27}2^{-4n-6}\left(3n2^{2n}+7\cdot2^{2n}+48n-88\right).$$
\end{lemma}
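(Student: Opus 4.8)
The plan is to exploit the product structure that is hidden behind the two moduli in the preceding proposition. First I would resolve the absolute values. Writing
\[
\psi:=\sum_{k=2}^{j_1}\frac{r_k\oplus a_{n+1-k}r_1\oplus \sigma_{n+1-k}}{2^{j_1-k+1}}+\frac{r_1\oplus\sigma_n}{2^{j_1}}\in[0,1),
\]
and observing that the leading bit inside the first modulus is $a_{j_2+1}r_1\oplus\sigma_{j_2+1}\in\{0,1\}$, the two $r$-factors $1-|1-a_{j_2+1}r_1\oplus\sigma_{j_2+1}-\dots|$ and $1-|1-1\oplus a_{j_2+1}r_1\oplus\sigma_{j_2+1}-\dots|$ evaluate to $\psi$ and $1-\psi$ in some order; call them $P$ and $R=1-P$. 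Likewise the two $s$-weights are complementary, $Q$ and $1-Q$, with $Q:=\sum_{k=1}^{j_2}\frac{s_k\oplus a_kr_1\oplus\sigma_k}{2^{j_2-k+1}}$. Substituting $R=1-P$, $S=1-Q$ and using $PQ+(1-P)(1-Q)=\tfrac12\bigl(1+(1-2P)(1-2Q)\bigr)$, the constant $-2^{-2n-2}$ cancels and I obtain the clean product form $\mu_{\bsj,\bsm}=2^{-2n-2}(1-2P)(1-2Q)$.

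The payoff comes upon squaring: since $P\in\{\psi,1-\psi\}$ forces $(1-2P)^2=(1-2\psi)^2$ regardless of the case, we get $\mu_{\bsj,\bsm}^2=2^{-4n-4}(1-2\psi)^2(1-2Q)^2$, which no longer depends on the bit $a_{j_2+1}r_1\oplus\sigma_{j_2+1}$ at all — this is the structural reason the final answer is free of $\bsa$ and $\vecs$. Next I would compute $\sum_{\bsm\in\DD_{\bsj}}\mu_{\bsj,\bsm}^2$. For fixed $r_1$ the factor $(1-2\psi)^2$ depends only on $r_2,\dots,r_{j_1}$ and $(1-2Q)^2$ only on $s_1,\dots,s_{j_2}$, so the sum factorizes. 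Since $x\mapsto x\oplus c$ is a bijection of $\{0,1\}$, the $s$-sum makes $Q$ run through $\{l/2^{j_2}:0\le l<2^{j_2}\}$, giving $\sum_s(1-2Q)^2=T(j_2)$ independently of $r_1$, where $T(j):=\sum_{l=0}^{2^j-1}(1-2l/2^j)^2=\tfrac13(2^j+2^{1-j})$. For the $\psi$-factor the bit of weight $2^{-j_1}$ is pinned to $r_1\oplus\sigma_n$, so for each fixed $r_1$ only half of the dyadic values occur; summing over both $r_1\in\{0,1\}$ restores the full range and yields $\sum_{r_1,r_2,\dots,r_{j_1}}(1-2\psi)^2=T(j_1)$. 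Hence $\sum_{\bsm\in\DD_{\bsj}}\mu_{\bsj,\bsm}^2=2^{-4n-4}T(j_1)T(j_2)$.

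Finally, since $|\bsj|=j_1+j_2=n-1$ is constant on $\mathcal{J}_{11}$, I would multiply by $2^{n-1}$ and sum over $j_1=1,\dots,n-1$ with $j_2=n-1-j_1$. Expanding
\[
T(j_1)T(j_2)=\tfrac19\bigl(2^{n-1}+2^{3-n}+2^{2j_1-n+2}+2^{n-2j_1}\bigr),
\]
the first two terms contribute multiples of $(n-1)$, while the last two are geometric series in $4^{\pm j_1}$ evaluated by $\sum_{j_1=1}^{n-1}4^{j_1}=\tfrac{4^n-4}{3}$ and $\sum_{j_1=1}^{n-1}4^{-j_1}=\tfrac{1-4^{1-n}}{3}$. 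Collecting the four contributions and simplifying produces the claimed value $\frac{1}{27}2^{-4n-6}(3n2^{2n}+7\cdot 2^{2n}+48n-88)$.

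I expect the main obstacle to be the first step: correctly resolving both moduli (the case split on $a_{j_2+1}r_1\oplus\sigma_{j_2+1}$) and recognizing the collapse to the single product $2^{-2n-2}(1-2P)(1-2Q)$. Done naively, squaring the bracket $PQ+R(1-Q)$ produces cross terms $PQRS$ and an apparent dependence on the shift that only cancels after heavy bookkeeping; the product form makes this cancellation transparent and reduces the remaining work to the routine, if fiddly, geometric summation over $j_1$.
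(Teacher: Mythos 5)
Your proof is correct; I checked the key identity and the final summation, and both reproduce the claimed value (e.g.\ for $n=2$ both give $2^{-11}$). The route differs from the paper's in a worthwhile way. The paper also starts from the Proposition, sums over $r_1$ to split the quantity into $M_1(\sigma_{j_2+1})+M_2(\sigma_{j_2+1}')$, and then evaluates each piece by brute force: it substitutes $l_1/2^{j_1-1}$ and $l_2/2^{j_2}$ for the digit sums and squares the \emph{unsimplified} bracket
$$2^{-2n-1}\left\{\frac{l_2}{2^{j_2}}\Big(\frac{l_1}{2^{j_1-1}}+\frac{\sigma_n}{2^{j_1}}\Big)+\Big(1-\frac{l_2}{2^{j_2}}\Big)\Big(1-\frac{l_1}{2^{j_1-1}}-\frac{\sigma_n}{2^{j_1}}\Big)\right\}-2^{-2n-2},$$
which forces it to track cross terms in a genuine double sum, to verify separately that $M_1(1)=M_1(0)$ and $M_2(1)=M_2(0)$, and to observe only a posteriori that the $\sigma_n$-dependence cancels when $M_1(0)+M_2(0)$ is formed. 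Your observation that $PQ+(1-P)(1-Q)=\tfrac12\bigl(1+(1-2P)(1-2Q)\bigr)$ collapses the Haar coefficient to the pure product $2^{-2n-2}(1-2P)(1-2Q)$ \emph{before} squaring is the structural improvement: it makes $\mu_{\bsj,\bsm}^2$ manifestly independent of the bit $a_{j_2+1}r_1\oplus\sigma_{j_2+1}$, factorizes $\sum_{\bsm}\mu_{\bsj,\bsm}^2$ into the product $2^{-4n-4}T(j_1)T(j_2)$ of two one-dimensional sums, and explains a priori why the final answer is free of $\bsa$ and $\vecs$ (in your argument the two values of $r_1\oplus\sigma_n$ simply interleave the even and odd dyadic grid points, absorbing the shift). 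What the paper's version buys is uniformity: the same mechanical pattern (fix $r_1$, replace digits by integers, expand, sum) is reused across all thirteen cases, whereas your simplification is specific to the complementary structure present in Case 11. One small caveat: the second $s$-weight in the paper's Proposition is misprinted (the normalization $2^{k-1-j_2}$ appears twice); your reading of it as $1-Q$ is the intended one, consistent with the paper's own proof.
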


\begin{proof}
As always, we first investigate $\sum_{\bsm\in\DD_{\bsj}}\mu_{\bsj,\bsm}^2$. We sum over $r_1$ to obtain
\begin{align*}
  \sum_{r_2,\dots,r_{j_1}=0}^{1}&\sum_{s_1,\dots,s_{j_2}=0}^{1} \Bigg\{
	   2^{-2n-1}\left(1-\left|1-1\oplus \sigma_{j_2+1}-\sum_{k=2}^{j_1}2^{k-1-j_1}(r_k\oplus \sigma_{n+1-k})-2^{-j_1}\sigma_n\right|\right) \\
		       &\times \left(\sum_{k=1}^{j_2}2^{k-1-j_2}(s_k \oplus \sigma_k)\right) \\
					&+ 2^{-2n-1}\left(1-\left|1-1\oplus \sigma_{j_2+1}-\sum_{k=2}^{j_1}2^{k-1-j_1}(r_k\oplus \sigma_{n+1-k})-2^{-j_1}\sigma_n\right|\right)\\
		      &\times \left(1-\sum_{k=1}^{j_2}2^{k-1-j_2}(s_k \oplus \sigma_k)\right)-2^{-2n-2}\Bigg\}^2\\
					+ \sum_{r_2,\dots,r_{j_1}=0}^{1}&\sum_{s_1,\dots,s_{j_2}=0}^{1} \\
					&\Bigg\{
	   2^{-2n-1}\left(1-\left|1-1\oplus \sigma_{j_2+1}'-\sum_{k=2}^{j_1}2^{k-1-j_1}(r_k\oplus \sigma_{n+1-k}')-2^{-j_1}(r_1\oplus\sigma_n\oplus 1)\right|\right) \\
		       &\times \left(\sum_{k=1}^{j_2}2^{k-1-j_2}(s_k\oplus a_kr_1 \oplus \sigma_k')\right) \\
					&+ 2^{-2n-1}\left(1-\left|1-1\oplus \sigma_{j_2+1}'-\sum_{k=2}^{j_1}2^{k-1-j_1}(r_k\oplus \sigma_{n+1-k}')-2^{-j_1}(r_1\oplus\sigma_n\oplus 1)\right|\right)\\
		      &\times \left(1-\sum_{k=1}^{j_2}2^{k-1-j_2}(s_k \oplus \sigma_k')\right)-2^{-2n-2}\Bigg\}^2=M_1(\sigma_{j_2+1})+M_2(\sigma_{j_2+1}').
\end{align*}
We can compute $M_1(0)$ via
\begin{align*}
  \sum_{l_1=0}^{2^{j_1-1}-1}\sum_{l_2=0}^{2^{j_2}-1} \Bigg[
	     2^{-2n-1}\bigg\{\frac{l_2}{2^{j_2}}\bigg(\frac{l_1}{2^{j_1-1}}+\frac{\sigma_n}{2^{j_1}}\bigg)+\bigg(1-\frac{l_2}{2^{j_2}}\bigg)\bigg(1-\frac{l_1}{2^{j_1-1}}-\frac{\sigma_n}{2^{j_1}}\bigg)\bigg\}-2^{-2n-2}
	\Bigg]^2.
\end{align*}
Similarly, one calculates $M_1(1)$ and finds $M_1(1)=M_1(0)$. We can compute $M_2(0)$ with the same formula as for $M_1(0)$ - we just have to
replace $\sigma_n$ by $1-\sigma_n$. Again we have $M_2(1)=M_2(0)$ and therefore $\sum_{\bsm\in\DD_{\bsj}}\mu_{\bsj,\bsm}^2=M_1(0)+M_2(0)$.
The rest follows by a straightforward summation of $2^{|\bsj|}\sum_{\bsm\in\DD_{\bsj}}\mu_{\bsj,\bsm}^2$ over all $\bsj\in\mathcal{J}_{11}$.
\end{proof}

\paragraph{Case 12: $\bsj\in\mathcal{J}_{12}:=\{(j_1,j_2)\in \NN_0^2: j_1+j_2\geq  n,  1\leq j_1\leq n-1, 1\leq j_2\leq n-1 \}$}

\begin{proposition}
  Let $\bsj\in \mathcal{J}_{12}$ and $\bsm\in \DD_{\bsj}$. Then we have
     \begin{align*} \mu_{\bsj,\bsm}=&2^{-n-j_1-j_2-2}\left(1-\left|1-\sum_{k=1}^{n-j_1}\frac{s_k \oplus a_kr_1\oplus \sigma_k}{2^{n-j_1-k}}\right|\right) \\
		  &\times  \left(1-\left|1-\sum_{k=2}^{n-j_2}\frac{r_k \oplus a_{n+1-k}r_1\oplus \sigma_{n+1-k}}{2^{n-j_2-k}}-\frac{r_1 \oplus \sigma_n}{2^{n-j_2-1}}\right|\right)-2^{-2j_1-2j_2-4} \end{align*}
		if $s_{\mu}\oplus a_{\mu}r_1 \oplus \sigma_{\mu}=r_{n+1-\mu}$ for all $\mu\in\{n+1-j_1,\dots, j_2\}$, and $\mu_{\bsj,\bsm}=-2^{-2j_1-2j_2-4}$ otherwise.
\end{proposition}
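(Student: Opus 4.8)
The plan is to exploit the $(0,n,2)$-net structure directly: since $j_1+j_2\geq n$, the dyadic box $I_{\bsj,\bsm}$ has volume $2^{-j_1-j_2}\leq 2^{-n}$ and is therefore contained in a box of volume $2^{-n}$, so it holds at most one point of $\cP_{\bsa}(\vecs)$. Consequently the sum over $\bsz\in I_{\bsj,\bsm}$ in~\eqref{art4} has at most one summand, and the whole computation reduces to deciding whether a point lies in the box and, if so, evaluating that single term.

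First I would translate the membership condition~\eqref{cond} into constraints on the digits $t_1,\dots,t_n$ of a point $\bsz\in\cP_{\bsa}(\vecs)$. The condition coming from $z_1$ fixes the top $j_1$ digits, namely $t_\mu=r_{n+1-\mu}$ for $\mu\in\{n+1-j_1,\dots,n\}$ (in particular $t_n=r_1$), while the condition $b_k=s_k$ coming from $z_2$, together with $b_k=t_k\oplus a_kr_1\oplus\sigma_k$, fixes $t_\mu=s_\mu\oplus a_\mu r_1\oplus\sigma_\mu$ for $\mu\in\{1,\dots,j_2\}$. Because $j_1+j_2\geq n$, these two index ranges together exhaust $\{1,\dots,n\}$ and overlap exactly on $\{n+1-j_1,\dots,j_2\}$. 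On this overlap the two prescriptions for $t_\mu$ must coincide, which is precisely the consistency condition $s_\mu\oplus a_\mu r_1\oplus\sigma_\mu=r_{n+1-\mu}$. If it fails for some $\mu$, then no point lies in $I_{\bsj,\bsm}$, the sum in~\eqref{art4} is empty, and only the term $-2^{-2j_1-2j_2-4}$ survives; this gives the second alternative of the statement.

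When the consistency condition holds, all digits $t_1,\dots,t_n$ are uniquely determined, so there is exactly one point $\bsz$ in the box, and it remains to evaluate the single summand via~\eqref{z1} and~\eqref{z2}. Since $j_1+j_2\geq n$ forces $n-j_1\leq j_2$, the digits $t_1,\dots,t_{n-j_1}$ occurring in~\eqref{z1} all lie in the range fixed by the $z_2$-constraints, so each $t_k$ may be replaced by $s_k\oplus a_kr_1\oplus\sigma_k$; this turns $1-|2m_1+1-2^{j_1+1}z_1|$ into the first factor $1-|1-\sum_{k=1}^{n-j_1}(s_k\oplus a_kr_1\oplus\sigma_k)/2^{n-j_1-k}|$. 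Symmetrically, $j_2+1\geq n+1-j_1$ guarantees that the digits $b_{j_2+1},\dots,b_n$ in~\eqref{z2}, which correspond to $t_{j_2+1},\dots,t_n$, lie in the range fixed by the $z_1$-constraints; substituting $t_k=r_{n+1-k}$, reindexing by $l=n+1-k$, and treating the boundary digit $b_n=r_1\oplus\sigma_n$ separately produces the second factor exactly as displayed. Inserting the product of these two factors into~\eqref{art4} yields the claimed formula.

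The main obstacle I anticipate is the careful bookkeeping of the two overlapping families of digit constraints. One must verify that the index ranges $\{1,\dots,n-j_1\}$ and $\{j_2+1,\dots,n\}$ are genuinely contained in the respective fixed ranges (both containments being equivalent to $j_1+j_2\geq n$), get the reindexing $l=n+1-k$ precisely right in the second factor, and handle the isolated boundary digit $b_n$ without error. Once these digit substitutions are pinned down the algebra is routine; the delicate part is entirely combinatorial.
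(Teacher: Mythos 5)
Your proposal is correct and takes essentially the same route as the paper's proof: both translate the membership condition~\eqref{cond} into the two overlapping families of digit constraints, identify the consistency system $s_\mu\oplus a_\mu r_1\oplus\sigma_\mu=r_{n+1-\mu}$ on the overlap $\{n+1-j_1,\dots,j_2\}$ (empty sum in~\eqref{art4} when it fails), and in the solvable case substitute the uniquely determined digits into~\eqref{z1} and~\eqref{z2}. The only cosmetic difference is that you deduce uniqueness of the point from the $(0,n,2)$-net volume property, while the paper gets it directly from the fact that the constraints fix all digits $t_1,\dots,t_n$; both are valid.
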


\begin{proof}
  Again, the condition $\bsz\in I_{\bsj,\bsm}$ yields, by~\eqref{cond}, that $t_{n+1-k}=r_k$ for all $k\in \{1,\dots, j_1\}$ and $b_k=s_k$ for all $k\in \{1,\dots, j_2\}$. As a result, for $\mu\in\{n+1-j_1,\dots, j_2\}$
	we must have
	\begin{equation} \label{system} r_{n+1-\mu}=b_\mu \oplus a_{\mu}t_n \oplus \sigma_{\mu}=s_{\mu}\oplus a_{\mu}r_1 \oplus \sigma_{\mu} \end{equation}
	as a condition to have a point of $\cP$ in the dyadic box $I_{\bsj,\bsm}$. Hence, if the system~\eqref{system} of equations is not satisfied, then
	only the linear part of the discrepancy function contributes to the Haar coefficient and hence $\mu_{\bsj,\bsm}=-2^{-2j_1-2j_2-4}$ is this case.
	Assume now that~\eqref{system} is satisfied and let $\bsz=(z_1,z_2)$ be the single point in $I_{\bsj,\bsm}$. Then by ~\eqref{z1} and~\eqref{z2} we obtain
	\begin{align*}
	  \mu_{\bsj,\bsm}=& 2^{-n-j_1-j_2-2}(1-|1-t_{n-j_1}-\dots-2^{j_1-n+1}t_1|)(1-|1-b_{j_2+1}-\dots-2^{j_2-n+1}b_n|) \\ &-2^{-2j_1-2j_2-4},
	\end{align*}
	where the above conditions on the digits give $t_{k}=s_{k}\oplus a_k r_1 \oplus \sigma_k$ for $k=1,\dots,n-j_1$ and $b_{n+1-k}=r_{k}\oplus a_{n+1-k} r_1 \oplus \sigma_{n+1-k}$ for $k=2,\dots,n-j_2$
	as well as $b_{n}=r_1\oplus \sigma_n$. Hence the result follows.
\end{proof}

\begin{lemma}
 We have 
 $$\sum_{\bsj\in \mathcal{J}_{12}}2^{|\bsj|}\sum_{\bsm\in\DD_{\bsj}}|\mu_{\bsj,\bsm}|^2=\frac{1}{27}4^{-2n-2}-\frac{1}{27}4^{-n-2}-\frac19 n 4^{-2n-1}+\frac59 n 4^{-n-3}.$$
\end{lemma}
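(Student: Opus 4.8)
The plan is to exploit the dichotomy furnished by the preceding Proposition. For each $\bsj=(j_1,j_2)\in\mathcal{J}_{12}$ the boxes $I_{\bsj,\bsm}$ split into those containing the (necessarily unique) point of $\cP_{\bsa}(\vecs)$ — equivalently those $\bsm$ for which the linear system~\eqref{system} is solvable — and the empty ones. Since~\eqref{system} consists of $j_1+j_2-n$ equations, each introducing a fresh digit $r_{n+1-\mu}$ not occurring in the others, it is consistent with exactly $2^n$ solutions; hence precisely $2^n$ of the $2^{j_1+j_2}$ boxes are non-empty. First I would dispose of the $2^{j_1+j_2}-2^n$ empty boxes, on each of which $\mu_{\bsj,\bsm}=-2^{-2j_1-2j_2-4}$, so that they contribute $(2^{j_1+j_2}-2^n)2^{-4j_1-4j_2-8}$ to $\sum_{\bsm}\mu_{\bsj,\bsm}^2$.

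The heart of the argument is the non-empty part. Writing the non-empty coefficient as $2^{-n-j_1-j_2-2}PQ-2^{-2j_1-2j_2-4}$ with $P,Q$ the two tent factors from the Proposition, I would expand the square and sum the three pieces over the $2^n$ solutions of~\eqref{system}. The crucial structural observation is a \emph{decoupling}: $P$ depends only on $r_1$ and the low digits $s_1,\dots,s_{n-j_1}$, while $Q$ depends only on $r_1,\dots,r_{n-j_2}$, and the remaining $j_1+j_2-n$ free digits $s_{n-j_1+1},\dots,s_{j_2}$ (those appearing in~\eqref{system}) enter neither factor, merely producing an overall multiplier $2^{j_1+j_2-n}$. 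For fixed $r_1$, XOR-ing by $a_kr_1\oplus\sigma_k$ is a bijection of the summation variables, so as $s_1,\dots,s_{n-j_1}$ run through $\{0,1\}^{n-j_1}$ the value $P$ ranges over the tent values $1-|1-m/2^{n-j_1-1}|$, $m=0,\dots,2^{n-j_1}-1$, independently of $r_1$, $\bsa$ and $\vecs$; likewise the digits $r_2,\dots,r_{n-j_2}$ make $Q$ range over the tent values of resolution $n-j_2$, with the bit $r_1\oplus\sigma_n$ fixing the parity of the associated integer. Summing over both choices of $r_1$ restores both parities, so after the decoupling each of $\sum PQ$ and $\sum P^2Q^2$ factors as $2^{j_1+j_2-n}$ times a product of two one-dimensional tent sums.

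I would then record the two tent sums, both elementary via $\sum l$ and $\sum l^2$:
$$\sum_{m=0}^{2^N-1}\left(1-\left|1-\tfrac{m}{2^{N-1}}\right|\right)=2^{N-1},\qquad \sum_{m=0}^{2^N-1}\left(1-\left|1-\tfrac{m}{2^{N-1}}\right|\right)^2=\tfrac13\left(2^N+2^{1-N}\right).$$
With $N_1:=n-j_1$ and $N_2:=n-j_2$ (so that $N_1+N_2=2n-j_1-j_2\le n$), the cross term collapses to $-2^{-3j_1-3j_2-7}$ and, pleasingly, the constant term of the expansion cancels the contribution $2^{n-4j_1-4j_2-8}$ of the empty boxes. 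What survives is the clean identity
$$\sum_{\bsm\in\DD_{\bsj}}\mu_{\bsj,\bsm}^2=2^{-3n-j_1-j_2-4}\,g(N_1)\,g(N_2)-2^{-3j_1-3j_2-8},\qquad g(N):=\tfrac13\left(2^N+2^{1-N}\right).$$

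Finally I would multiply by $2^{|\bsj|}=2^{j_1+j_2}$, turning the first term into $2^{-3n-4}g(N_1)g(N_2)$ and the second into $-2^{-2j_1-2j_2-8}$, and sum over $\mathcal{J}_{12}$. Expanding $g(N_1)g(N_2)=\tfrac19\left(2^{N_1+N_2}+2^{1+N_1-N_2}+2^{1-N_1+N_2}+2^{2-N_1-N_2}\right)$ and organising the double sum by the level $s=N_1+N_2$ (which runs from $2$ to $n$, with $s-1$ admissible pairs) reduces everything to the arithmetico-geometric sums $\sum(s-1)2^{s}$, $\sum(s-1)2^{2-s}$ and $\sum\left(2^{1+s}-2^{3-s}\right)$, while the subtracted term gives $2^{-8}\sum_{s=n}^{2n-2}(2n-1-s)4^{-s}$. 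Collecting the resulting closed forms yields the stated expression. I expect the decoupling and the parity bookkeeping of the second paragraph to be the only genuinely delicate step; the concluding summation, though lengthy, is entirely routine and may be cross-checked numerically against Warnock's formula as the author describes.
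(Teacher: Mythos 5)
Your proposal is correct and follows essentially the same route as the paper: split the boxes of each level $\bsj\in\mathcal{J}_{12}$ according to whether the system~\eqref{system} is solvable, expand the square of the coefficient on the $2^n$ non-empty boxes, evaluate exactly, and sum over $\bsj$. Your compact per-$\bsj$ identity $\sum_{\bsm\in\DD_{\bsj}}\mu_{\bsj,\bsm}^2=2^{-3n-j_1-j_2-4}g(n-j_1)g(n-j_2)-2^{-3j_1-3j_2-8}$ with $g(N)=\tfrac13\bigl(2^N+2^{1-N}\bigr)$ agrees term by term with the paper's intermediate result (the paper's $S_1+S_2$, whose $\sigma_n$-dependent parts cancel exactly as your parity-restoration argument predicts, is precisely your product of one-dimensional tent sums times $2^{j_1+j_2-n}$), so the concluding summation indeed yields the stated formula.
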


\begin{proof} We write
 \begin{align*} \sum_{\bsm\in\DD_{\bsj}}|\mu_{\bsj,\bsm}|^2=&\sum_{m_1=0}^{2^{j_1}-1}\left(\sum_{\substack{m_2=0 \\ \eqref{system} \text{\, satisfied}}}^{2^{j_2}-1}\mu_{\bsj,\bsm}^2
	       +\sum_{\substack{m_2=0 \\ \eqref{system} \text{\, not satisfied}}}^{2^{j_2}-1}(-2^{-2j_2-2j_2-4})^2\right) \\
				=& \sum_{m_1=0}^{2^{j_1}-1}\sum_{\substack{m_2=0 \\ \eqref{system} \text{\, satisfied}}}^{2^{j_2}-1}\mu_{\bsj,\bsm}^2
				  +2^{j_1}(2^{j_2}-2^{n-j_1})2^{-4j_1-4j_2-8}.
 \end{align*}
Note that for a fixed $m_1\in\DD_{j_1}$ the system~\eqref{system} fixes the digits $s_{n-j_1+1},\dots,s_{j_2}$ and thus the digits $s_1,\dots,s_{n-j_1}$ remain free. This means that
there are $2^{n-j_1}$ elements in $\DD_{j_2}$ which satisfy $\eqref{system}$, whereas the remaining $2^{j_2}-2^{n-j_1}$ elements do not. This is where the factor $2^{j_2}-2^{n-j_1}$ in the
last expression comes from. Let us study $$\sum_{m_1=0}^{2^{j_1}-1}\sum_{\substack{m_2=0 \\ \eqref{system} \text{\, satisfied}}}^{2^{j_2}-1}\mu_{\bsj,\bsm}^2.$$
It equals
\begin{align*}
  \sum_{r_2,\dots,r_{j_1}=0}^{1}&\sum_{s_1,\dots,s_{n-j_1}}^{1}\bigg(2^{-n-j_1-j_2-2}(1-|1-s_{n-j_1}\oplus\sigma_{n-j_1}-\dots-2^{j_1-n+1}(s_1\oplus\sigma_1)|) \\
	&\times (1-|1-r_{n-j_2}\oplus \sigma_{j_2+1}-\dots-2^{j_2-n+1}\sigma_n|)-2^{-2j_1-2j_2-4}\bigg)^2  \\
	+\sum_{r_2,\dots,r_{j_1}=0}^{1}&\sum_{s_1,\dots,s_{n-j_1}}^{1}\bigg(2^{-n-j_1-j_2-2}(1-|1-s_{n-j_1}\oplus\sigma_{n-j_1}'-\dots-2^{j_1-n+1}(s_1\oplus\sigma_1')|) \\
	&\times (1-|1-r_{n-j_2}\oplus \sigma_{j_2+1}'-\dots-2^{j_2-n+1}\sigma_n'|)-2^{-2j_1-2j_2-4}\bigg)^2=:S_1+S_2,						
\end{align*}
where we already summed over $r_1$. The sums $S_1$ and $S_2$ can be computed similarly. Note that the summands in $S_1$ do not depend on the digits $r_{n-j_2+1},\dots,r_{j_1}$.
Summation over $r_{n-j_2}$ and $s_{n-j_1}$ leads to
\begin{align*}
  S_1=& 2^{j_1+j_2-n}\sum_{r_2,\dots,r_{n-j_2-1}=0}^{1}\sum_{s_1,\dots,s_{n-j_1-1}=0}^{1}\bigg\{
	    \left(2^{-n-j_1-j_2-2}u(v+2^{j_2-n+1}\sigma_n)-2^{-2j_2-2j_2-4}\right)^2 \\
			&+\left(2^{-n-j_1-j_2-2}u(1-v-2^{j_2-n+1}\sigma_n)-2^{-2j_2-2j_2-4}\right)^2 \\
			&+\left(2^{-n-j_1-j_2-2}(1-u)(v+2^{j_2-n+1}\sigma_n)-2^{-2j_2-2j_2-4}\right)^2 \\
			&+\left(2^{-n-j_1-j_2-2}(1-u)(1-v-2^{j_2-n+1}\sigma_n)-2^{-2j_2-2j_2-4}\right)^2\bigg\},
\end{align*}
where $u=2^{-1}s_{n-j_1-1}\oplus\sigma_{n-j_1-1}+\dots+2^{j_1-n+1}s_1\oplus\sigma_1$ and $v=r_{n-j_2-1}\oplus \sigma_{j_2+2}+\dots+2^{j_2-n+1}\sigma_n$. To compute the sum over the remaining digits,
we replace $u$ by $2^{-n+j_1+1}l_1$ and $v$ by $2^{-n+j_2-2}l_2$ and let $l_1$ run from $0$ to $2^{n-j_1-1}-1$ and $l_2$ run from $0$ to $2^{n-j_2-2}-1$, respectively.
This yields
\begin{align*}
  S_1=& -2^{-3j_1-3j_2-8}+\frac19 2^{-5n-1}+\frac19 2^{-3n-2j_1-2}+\frac19 2^{-3n-2j_2-4}+\frac19 2^{-n-2j_1-2j_2-5}\\
	  &+2^{n-4j_1-4j_2-9}-\sigma_n \left(\frac13 2^{-5n-2}+\frac13 2^{-3n-2j_1-3}\right).
\end{align*}
We obtain a similar result for $S_2$ with the only difference that $\sigma_n$ is replaced by $1-\sigma_n$. Putting all previous results together,
we find
 \begin{align*} \sum_{\bsm\in\DD_{\bsj}}|\mu_{\bsj,\bsm}|^2=&-2^{-3j_1-3j_2-7}+\frac19 2^{-5n-2}+\frac19 2^{-3n-2j_1-3}+\frac19 2^{-3n-2j_2-3}\\ &+\frac19 2^{-n-2j_1-2j_2-4}
	  +2^{n-4j_1-4j_2-8}+2^{j_1}(2^{j_2}-2^{n-j_1})2^{-4j_1-4j_2-8}. \end{align*}
		The rest follows by a straightforward summation of $2^{|\bsj|}\sum_{\bsm\in\DD_{\bsj}}\mu_{\bsj,\bsm}^2$ over all $\bsj\in\mathcal{J}_{11}$.
\end{proof}

\paragraph{Case 13: $\bsj\in\mathcal{J}_{13}:=\{(j_1,j_2)\in \NN_0^2: j_1\geq n \text{\, or \,} j_2\geq  n \}$}

\begin{proposition}
  Let $\bsj\in \mathcal{J}_{13}$ and $\bsm\in \DD_{\bsj}$. Then we have
     $$ \mu_{\bsj,\bsm}=-2^{-2j_1-2j_2-4}. $$
\end{proposition}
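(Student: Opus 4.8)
The plan is to reduce this to the same elementary observation that settled Cases 4 and 7: when a dyadic box is too thin in one coordinate, it cannot contain any point of the net in its interior, so only the linear part of the discrepancy function survives. Since here $\bsj=(j_1,j_2)\in\NN_0^2$, the relevant formula is~\eqref{art4}, whose only constant summand is $-2^{-2j_1-2j_2-4}$. It therefore suffices to show that the sum $\sum_{\bsz\in I_{\bsj,\bsm}}(\cdots)$ appearing in~\eqref{art4} is empty.

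First I would treat the case $j_1\geq n$, the case $j_2\geq n$ being entirely symmetric with the roles of $z_1$ and $z_2$ interchanged. Every point $\bsz=(z_1,z_2)\in\cP_{\bsa}(\vecs)$ has first coordinate $z_1=\frac{t_n}{2}+\dots+\frac{t_1}{2^n}$, which is a dyadic rational with denominator $2^n$. The interval $I_{j_1,m_1}$ has length $2^{-j_1}\leq 2^{-n}$ and endpoints that are integer multiples of $2^{-j_1}$, so its interior contains no multiple of $2^{-n}$. Consequently $z_1$ never lies in the interior of $I_{j_1,m_1}$, and hence no point of the net lies in $\mathring{I}_{\bsj,\bsm}$.

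As noted after the formulas~\eqref{art2}--\eqref{art4}, the summands vanish on the boundary of the dyadic box, so the sum over $\bsz\in I_{\bsj,\bsm}$ equals the empty sum over $\bsz\in\mathring{I}_{\bsj,\bsm}$ and therefore vanishes. Inserting this into~\eqref{art4} leaves only the constant term and yields $\mu_{\bsj,\bsm}=-2^{-2j_1-2j_2-4}$, as claimed.

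There is essentially no obstacle here: the argument is a one-line containment check, identical in spirit to Cases 4 and 7 but invoking the two-dimensional formula~\eqref{art4}. The only point requiring a word of care is confirming that the boundary of the box contributes nothing, which is precisely the content of the note already recorded after~\eqref{art4}; this is what legitimizes replacing the closed box by its interior and thereby discarding the entire sum.
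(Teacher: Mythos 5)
Your proof is correct and follows exactly the paper's argument: the paper's proof of this case likewise observes that no point of the net lies in the interior of $I_{\bsj,\bsm}$ when $j_1\geq n$ or $j_2\geq n$, so only the constant term in~\eqref{art4} survives. You merely spell out the two details the paper leaves implicit (the dyadic-rational coordinates cannot fall strictly inside so fine a dyadic interval, and boundary points contribute nothing), which is a faithful elaboration rather than a different route.
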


\begin{proof}
  No point lies in the interior of $I_{\bsj,\bsm}$ if $j_1\geq n \text{\, or \,} j_2\geq  n$, and hence the result follows directly from~\eqref{art4}.
\end{proof}

Since the Haar coefficients in this case are independent of $\bsm$, the following consequence is easy to verify.
\begin{lemma}
 We have 
 $$\sum_{\bsj\in \mathcal{J}_{13}}2^{|\bsj|}\sum_{\bsm\in\DD_{\bsj}}|\mu_{\bsj,\bsm}|^2=\frac19 2^{-4n-4} (2^{2n+1}-1).$$
\end{lemma}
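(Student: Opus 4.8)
The plan is to exploit the fact that, by the preceding Proposition, the Haar coefficient $\mu_{\bsj,\bsm}=-2^{-2j_1-2j_2-4}$ is independent of $\bsm$ for $\bsj\in\mathcal{J}_{13}$, so that the inner sum collapses to a pure counting argument. First I would record three elementary facts: that $|\mu_{\bsj,\bsm}|^2=2^{-4j_1-4j_2-8}$; that $\DD_{\bsj}=\DD_{j_1}\times\DD_{j_2}$ has exactly $2^{j_1}2^{j_2}$ elements; and that $|\bsj|=j_1+j_2$ here, since both coordinates are nonnegative on $\mathcal{J}_{13}$. Multiplying these together yields the clean per-$\bsj$ contribution
$$ 2^{|\bsj|}\sum_{\bsm\in\DD_{\bsj}}|\mu_{\bsj,\bsm}|^2 = 2^{j_1+j_2}\cdot 2^{j_1+j_2}\cdot 2^{-4j_1-4j_2-8} = 2^{-2j_1-2j_2-8}. $$

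Next I would sum this geometric expression over the index set. The only point requiring care lies in the logical ``or'' in the definition $\mathcal{J}_{13}=\{(j_1,j_2)\in\NN_0^2 : j_1\geq n \text{ or } j_2\geq n\}$, which I would handle by inclusion--exclusion. Writing $A=\{j_1\geq n,\ j_2\geq 0\}$ and $B=\{j_1\geq 0,\ j_2\geq n\}$, one has $\mathcal{J}_{13}=A\cup B$ with $A\cap B=\{j_1\geq n,\ j_2\geq n\}$, so that the total sum equals the sum over $A$ plus the sum over $B$ minus the sum over $A\cap B$. Each of these three sums factors as a product of two independent geometric series in $j_1$ and $j_2$, so no genuine obstacle arises beyond this bookkeeping.

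Finally I would evaluate the series using $\sum_{j\geq 0}2^{-2j}=\tfrac43$ and $\sum_{j\geq n}2^{-2j}=\tfrac43\,2^{-2n}$. This gives the sums over $A$ and $B$ as $2^{-8}\cdot\tfrac{16}{9}\,2^{-2n}$ each and the sum over $A\cap B$ as $2^{-8}\cdot\tfrac{16}{9}\,2^{-4n}$, whence
$$ \sum_{\bsj\in\mathcal{J}_{13}}2^{|\bsj|}\sum_{\bsm\in\DD_{\bsj}}|\mu_{\bsj,\bsm}|^2 = \frac{16}{9}\,2^{-8}\left(2\cdot 2^{-2n}-2^{-4n}\right)=\frac{1}{144}\left(2^{-2n+1}-2^{-4n}\right). $$
A one-line rearrangement, using $2^{-8}\cdot\tfrac{16}{9}=\tfrac1{144}$, then confirms this coincides with the claimed value $\frac19\,2^{-4n-4}(2^{2n+1}-1)$, completing the verification.
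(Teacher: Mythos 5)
Your proposal is correct and follows exactly the route the paper intends: the paper's own ``proof'' merely remarks that the coefficients $\mu_{\bsj,\bsm}=-2^{-2j_1-2j_2-4}$ are independent of $\bsm$ and leaves the summation as an easy verification, which is precisely what you carry out (constancy of the coefficient, counting $|\DD_{\bsj}|=2^{j_1+j_2}$, and geometric series with inclusion--exclusion over the ``or'' in the definition of $\mathcal{J}_{13}$). All your intermediate values and the final identification $\frac{1}{144}\left(2^{-2n+1}-2^{-4n}\right)=\frac19\,2^{-4n-4}\left(2^{2n+1}-1\right)$ check out.
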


\section{The Haar coefficients of the symmetrized digital nets}

From the construction $\widetilde{\cP}_{\bsa}(\vecs)=\cP_{\bsa}(\vecs)\cup \cP_{\bsa}(\vecs^{*})$, it is easy to see that for the Haar coefficients $\tilde{\mu}_{\bsj,\bsm}$ of $\Delta(\cdot,\widetilde{\cP}_{\bsa}(\vecs))$
we have $\tilde{\mu}_{\bsj,\bsm}=\mu_{\bsj,\bsm}^{\vecs}+\mu_{\bsj,\bsm}^{\vecs*}$ (compare~\cite[Proof of Lemma 3]{HKP14}). Here, $\mu_{\bsj,\bsm}^{\vecs}$ denote the Haar coefficients of $\Delta(\cdot,\cP_{\bsa}(\vecs))$ and $\mu_{\bsj,\bsm}^{\vecs^*}$ those of $\Delta(\cdot,\cP_{\bsa}(\vecs^*))$. Hence, it is an easy task to derive the Haar coefficients  $\tilde{\mu}_{\bsj,\bsm}$ from our previous results.

\begin{proposition}
  Let $\bsj\in \NN_{-1}^2$ and $\bsm\in\DD_{\bsj}$. Then we have
	\begin{itemize}
	   \item if $\bsj\in \mathcal{J}_1$: $\tilde{\mu}_{\bsj,\bsm}=\frac{1}{2^{n+1}}+\frac{1}{2^{2n+2}}.$
		 \item if $\bsj\in \mathcal{J}_2$: $ \tilde{\mu}_{\bsj,\bsm}=\frac{1}{2^{2n+3}}\left(2-\frac{1}{2^{2j_2-n}}\right)-\frac{1+a_{j_2+1}(2(\sigma_{j_2+1}\oplus \sigma_n) -1)}{2^{2n+2}}. $
		 \item if $\bsj\in \mathcal{J}_3$: $ \tilde{\mu}_{\bsj,\bsm}=-\frac{1}{2^{3n+1}}+\frac{1}{2^{2n+2}}\sum_{k=1}^{n-1}\frac{a_k (1- s_k  \oplus \sigma_k \oplus \sigma_n)}{2^{n-k}}. $
		 \item if $\bsj\in \mathcal{J}_4$ or $\bsj\in \mathcal{J}_7$ : $ \tilde{\mu}_{\bsj,\bsm}=-2^{-2j_i-3}$, with $i=1$ or $i=2$, respectively.
		 \item if $\bsj\in \mathcal{J}_5$: $ \tilde{\mu}_{\bsj,\bsm}=-\frac{1}{2^{n+3}}. $
		 \item if $\bsj\in \mathcal{J}_6$: $ \tilde{\mu}_{\bsj,\bsm}=-\frac{1}{2^{n+2j_1+3}}. $
		 \item if $\bsj\in \mathcal{J}_8$: $ \tilde{\mu}_{\bsj,\bsm}=\frac{1}{2^{2n+2}}(\sigma_{j_2+1}+\sigma_{j_2+1}'-1)(2\sigma_n-1). $
		 \item if $\bsj\in \mathcal{J}_9$: $ \tilde{\mu}_{\bsj,\bsm}=\frac{1}{2^{2n+2}}(2(a_{n-j_1}r_1\oplus\sigma_{j_2+1})-1)(2(a_{j_2+1}r_1\oplus\sigma_{n-j_1})-1). $
		 \item if $\bsj\in \mathcal{J}_{10}$: $\tilde{\mu}_{\bsj,\bsm}=-(-1)^{\sigma_n}2^{-2n-2}\sum_{1}^{2^{2n+2}}\frac{(1-a_k)(2(s_k\oplus \sigma_k)-1)}{2^{n-k}}$.
		 \item if $\bsj\in \mathcal{J}_{11}$: We have
		       \begin{align*}
					   \tilde{\mu}_{\bsj,\bsm}=&2^{-2n-2}\bigg\{\left(1-\left|1-a_{j_2+1}r_1\oplus \sigma_{j_2+1}-u-2^{-j_1}(r_1\oplus \sigma_n)\right|\right)v\\
						                         &+\left(1-\left|1-a_{j_2+1}r_1\oplus \sigma_{j_2+1}\oplus 1-u-2^{-j_1}(r_1\oplus \sigma_n)\right|\right)(1-v)\bigg\} \\
																		&+2^{-2n-2}\bigg\{\left(1-\left|1-a_{j_2+1}r_1\oplus \sigma_{j_2+1}\oplus 1-u'-2^{-j_1}(r_1\oplus \sigma_n\oplus 1)\right|\right)v\\
						                         &+\left(1-\left|1-a_{j_2+1}r_1\oplus \sigma_{j_2+1}-u'-2^{-j_1}(r_1\oplus \sigma_n\oplus 1)\right|\right)(1-v')\bigg\} -2^{-2n-2},
					 \end{align*}
				  where $u=\sum_{k=2}^{j_1} 2^{k-1-j_1}(r_k\oplus a_{n+1-k}r_1\oplus \sigma_{n+1-k})$, $u'=\sum_{k=2}^{j_1} 2^{k-1-j_1}-u$, and where
					$v=\sum_{k=1}^{j_2} 2^{k-1-j_2}(s_k \oplus a_k r_1 \oplus \sigma_k)$ and $v'=\sum_{k=1}^{j_2} 2^{k-1-j_2}-v$.
		 \item if $\bsj\in \mathcal{J}_{12}$: If $j_1+j_2=n$, then we have
		  \begin{align*}
			  \tilde{\mu}_{\bsj,\bsm}=&2^{-n-j_1-j_2-3}\left(1-\left|1-\sum_{k=1}^{n-j_1}\frac{s_k\oplus a_kr_1\oplus \sigma_k}{2^{n-j_1-k}}\right|\right) \\
				   &\times \left(1-\left|1-\sum_{k=2}^{n-j_2}\frac{r_k\oplus a_{n+1-k}r_1\oplus \sigma_{n+1-k}}{2^{n-j_2-k}}-\frac{r_1\oplus\sigma_n}{2^{n-j_2-1}}\right|\right) \\
					&+2^{-n-j_1-j_2-3}\left(1-\left|1-\sum_{k=1}^{n-j_1}\frac{s_k\oplus a_kr_1\oplus \sigma_k\oplus 1}{2^{n-j_1-k}}\right|\right) \\
				   &\times \left(1-\left|1-\sum_{k=2}^{n-j_2}\frac{r_k\oplus a_{n+1-k}r_1\oplus \sigma_{n+1-k}\oplus 1}{2^{n-j_2-k}}-\frac{r_1\oplus\sigma_n\oplus 1}{2^{n-j_2-1}}\right|\right)-2^{-2j_1-2j_2-4}.
			\end{align*}
		If $j_1+j_2\geq n+1$, then
		     \begin{align*} \tilde{\mu}_{\bsj,\bsm}=&2^{-n-j_1-j_2-3}\left(1-\left|1-\sum_{k=1}^{n-j_1}\frac{s_k \oplus a_kr_1\oplus \sigma_k}{2^{n-j_1-k}}\right|\right) \\
		  &\times  \left(1-\left|1-\sum_{k=2}^{n-j_2}\frac{r_k \oplus a_{n+1-k}r_1\oplus \sigma_{n+1-k}}{2^{n-j_2-k}}-\frac{r_1 \oplus \sigma_n}{2^{n-j_2-1}}\right|\right)-2^{-2j_1-2j_2-4} \end{align*}
		if $s_{\nu}\oplus a_{\nu}r_1 \oplus \sigma_{\nu}=r_{n+1-\mu}$,
		 \begin{align*}\tilde{\mu}_{\bsj,\bsm}=&2^{-n-j_1-j_2-3}\left(1-\left|1-\sum_{k=1}^{n-j_1}\frac{s_k \oplus a_kr_1\oplus \sigma_k\oplus 1}{2^{n-j_1-k}}\right|\right) \\
		  &\times  \left(1-\left|1-\sum_{k=2}^{n-j_2}\frac{r_k \oplus a_{n+1-k}r_1\oplus \sigma_{n+1-k}\oplus 1}{2^{n-j_2-k}}-\frac{r_1 \oplus \sigma_n\oplus 1}{2^{n-j_2-1}}\right|\right)-2^{-2j_1-2j_2-4} \end{align*}
		if $s_{\nu}\oplus a_{\nu}r_1 \oplus \sigma_{\nu}\oplus 1=r_{n+1-\mu}$, and $\tilde{\mu}_{\bsj,\bsm}=-2^{-2j_1-2j_2-4}$ otherwise.
		 \item if $\bsj\in \mathcal{J}_{13}$: $ \tilde{\mu}_{\bsj,\bsm}=-2^{-2j_1-2j_2-4}$.
	\end{itemize}
\end{proposition}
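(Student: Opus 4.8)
The plan is to reduce everything to the coefficients already computed in Section~\ref{haark}. Since $\widetilde{\cP}_{\bsa}(\vecs)=\cP_{\bsa}(\vecs)\cup\cP_{\bsa}(\vecs^{*})$ is a disjoint union of two $2^n$-point nets sharing the same linear part $-t_1t_2$, its discrepancy function is the average $\Delta(\cdot,\widetilde{\cP}_{\bsa}(\vecs))=\tfrac12\bigl(\Delta(\cdot,\cP_{\bsa}(\vecs))+\Delta(\cdot,\cP_{\bsa}(\vecs^{*}))\bigr)$, so by linearity of the Haar coefficients $\tilde{\mu}_{\bsj,\bsm}=\tfrac12(\mu^{\vecs}_{\bsj,\bsm}+\mu^{\vecs^{*}}_{\bsj,\bsm})$. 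Every one of the thirteen formulas then follows by taking the corresponding Proposition of Section~\ref{haark}, evaluating it once at $\vecs$ and once at the dual shift $\vecs^{*}$, and averaging. The only input needed is the effect of the passage $\vecs\mapsto\vecs^{*}$, i.e.\ of replacing every $\sigma_i$ by $1-\sigma_i$: it sends $L\mapsto -L$, $\ell\mapsto -\ell$, $\sigma_n\mapsto 1-\sigma_n$, the primed quantities $\sigma_i'=\sigma_i\oplus a_i\mapsto\sigma_i'\oplus 1$, and, at the level of single digits, $s_k\oplus\sigma_k\mapsto 1-(s_k\oplus\sigma_k)$ and $s_k\oplus\sigma_k'\mapsto 1-(s_k\oplus\sigma_k')$. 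I would record these reduction rules once and reuse them throughout.

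For the cases in which $\mu_{\bsj,\bsm}$ is independent of the shift, namely $\mathcal{J}_4$, $\mathcal{J}_7$ and $\mathcal{J}_{13}$ (the relevant boxes are empty, so only $-t_1t_2$ contributes), the two evaluations coincide and the averaged value equals the common one; these are immediate. For the cases $\mathcal{J}_1$, $\mathcal{J}_3$, $\mathcal{J}_5$, $\mathcal{J}_6$ and $\mathcal{J}_9$ the mechanism is uniform: averaging the values at $\vecs$ and $\vecs^{*}$ retains the part that is even under $\sigma\mapsto 1-\sigma$ and cancels the odd part. For instance, in $\mathcal{J}_1$ the term proportional to $\ell-L$ reverses sign and drops out, leaving $\tfrac{1}{2^{n+1}}+\tfrac{1}{2^{2n+2}}$; in $\mathcal{J}_5$ the $L$- and $\sigma_n$-pieces reorganize to $-\tfrac{1}{2^{n+3}}$; and in $\mathcal{J}_9$ each of the two factors reverses sign, so their product, and hence the coefficient, is unchanged. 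The slightly subtler members of this family are $\mathcal{J}_2$ and $\mathcal{J}_8$, where an $a_i$-factor decides whether a shift-dependent quantity is genuinely odd (and cancels) or splits into two complementary pieces that only partly survive; carrying the alternatives $a_{j_2+1}=0$ and $a_{j_2+1}=1$ separately reproduces the $a$-dependence in the statement, e.g.\ the factor $1+a_{j_2+1}(2(\sigma_{j_2+1}\oplus\sigma_n)-1)$ in $\mathcal{J}_2$.

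The genuinely delicate cases are $\mathcal{J}_{10}$, $\mathcal{J}_{11}$ and $\mathcal{J}_{12}$, where the shift appears inside an absolute value $1-|\cdots|$ and the averaging is no longer linear. Here I would use the fact that $\sigma\mapsto 1-\sigma$ replaces the argument of each $|\cdots|$ by its complement, and then simplify expressions of the form $(1-|x|)+(1-|\text{complement of }x|)$ by hand, keeping careful track of which signs survive. This is precisely how the prefactor $(-1)^{\sigma_n}$ and the restriction to indices with $a_k=0$ emerge in $\mathcal{J}_{10}$, and how the complemented data $u',v'$ enter the $\mathcal{J}_{11}$ formula alongside $u,v$.

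I expect the main obstacle to be $\mathcal{J}_{12}$, because there the occupancy condition~\eqref{system} itself changes under the shift: for a fixed box, the requirement for $\cP_{\bsa}(\vecs^{*})$ to contain a point is the \emph{complement} of the requirement for $\cP_{\bsa}(\vecs)$. Hence, when $j_1+j_2\geq n+1$, at most one of the two nets occupies a given box, which forces the trichotomy in the statement (the $\vecs$-condition holds, the $\vecs^{*}$-condition holds, or neither), whereas when $j_1+j_2=n$ the condition is vacuous, both nets always contribute, and one obtains the sum of two product terms. The real work is the bookkeeping of which box is occupied by which net, carried out together with the observation that the background term $-2^{-2j_1-2j_2-4}$ is present for every box regardless of occupancy. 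Getting this matching exactly right --- rather than the absolute-value algebra itself --- is where the argument is most error-prone, which is also why one would want to confirm the final formulas numerically via Warnock's identity.
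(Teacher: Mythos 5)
Your proposal is correct and follows essentially the same route as the paper: the paper also reduces everything to the Section~3 propositions by writing $\widetilde{\cP}_{\bsa}(\vecs)=\cP_{\bsa}(\vecs)\cup\cP_{\bsa}(\vecs^{*})$, expressing $\tilde{\mu}_{\bsj,\bsm}$ through $\mu^{\vecs}_{\bsj,\bsm}$ and $\mu^{\vecs^{*}}_{\bsj,\bsm}$, and then working through the thirteen cases via the substitution $\sigma_i\mapsto\sigma_i\oplus 1$ (including the occupancy trichotomy for $\mathcal{J}_{12}$). Your normalization $\tilde{\mu}_{\bsj,\bsm}=\tfrac12\bigl(\mu^{\vecs}_{\bsj,\bsm}+\mu^{\vecs^{*}}_{\bsj,\bsm}\bigr)$ is in fact the correct one --- it is what reproduces the stated formulas and what the paper itself uses in Section~5 --- whereas the paper's displayed relation in Section~4 omits the factor $\tfrac12$, evidently a typo.
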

Now we have to calculate $\sum_{\bsj\in \mathcal{J}_i}2^{|\bsj|}\sum_{\bsm\in\DD_{\bsj}} |\tilde{\mu}_{\bsj,\bsm}|^2$ for all $i\in\{1,2,\dots,13\}$. In many cases this is easy, and the argumentation in the more
difficult cases is very similar to what we did in the previous section. We therefore state the following results without proofs.
\begin{lemma}
  We consider a symmetrized net $\widetilde{\cP_{\bsa}}$. Let $\tilde{\mu}_{\bsj,\bsm}$ for $\bsj\in \NN_{-1}^2$ and $\bsm\in\DD_{\bsj}$ be the Haar coefficients of the corresponding discrepancy function. Then we have
	\begin{itemize}
	   \item  $\sum_{\bsj\in \mathcal{J}_1}2^{|\bsj|}\sum_{\bsm\in\DD_{\bsj}} |\tilde{\mu}_{\bsj,\bsm}|^2=\left(\frac{1}{2^{n+1}}+\frac{1}{2^{2n+2}}\right)^2.$
		 \item $\sum_{\bsj\in \mathcal{J}_2}2^{|\bsj|}\sum_{\bsm\in\DD_{\bsj}} |\tilde{\mu}_{\bsj,\bsm}|^2=\frac{1}{3\cdot 2^{4n+4}}(2^{2n}-4)-\frac{(-1)^{\sigma_n}}{2^{3n+4}}L+\frac{1}{2^{4n+6}}\sum_{i=1}^{n-1}a_i2^{2i}$.
		 \item $\sum_{\bsj\in \mathcal{J}_3}2^{|\bsj|}\sum_{\bsm\in\DD_{\bsj}} |\tilde{\mu}_{\bsj,\bsm}|^2=\frac{1}{2^{4n+6}}\sum_{i=1}^{n-1}a_i2^{2i}+\frac{1}{2^{4n+4}}.$
		 \item $\sum_{\bsj\in \mathcal{J}_4}2^{|\bsj|}\sum_{\bsm\in\DD_{\bsj}}|\tilde{\mu}_{\bsj,\bsm}|^2=\sum_{\bsj\in \mathcal{J}_7}2^{|\bsj|}\sum_{\bsm\in\DD_{\bsj}}|\tilde{\mu}_{\bsj,\bsm}|^2=\frac{1}{48\cdot 2^{2n}}$.
		\item $\sum_{\bsj\in \mathcal{J}_5}2^{|\bsj|}\sum_{\bsm\in\DD_{\bsj}}|\tilde{\mu}_{\bsj,\bsm}|^2=\frac{1}{2^{2n+6}}.$
		\item $\sum_{\bsj\in \mathcal{J}_6}2^{|\bsj|}\sum_{\bsm\in\DD_{\bsj}}|\tilde{\mu}_{\bsj,\bsm}|^2=\frac{1}{3\cdot 2^{4n+6}}(2^{2n}-4).$
		\item $\sum_{\bsj\in \mathcal{J}_8}2^{|\bsj|}\sum_{\bsm\in\DD_{\bsj}}|\tilde{\mu}_{\bsj,\bsm}|^2=\frac{1}{3\cdot 2^{4n+6}}(2^{2n}-4)-\frac{1}{2^{4n+6}}\sum_{i=1}^{n-1}a_i2^{2i}$.
		\item $\sum_{\bsj\in \mathcal{J}_9}2^{|\bsj|}\sum_{\bsm\in\DD_{\bsj}}|\tilde{\mu}_{\bsj,\bsm}|^2=\frac{1}{9\cdot 2^{4n+6}}(3n\cdot 2^{2n}-7\cdot 2^{2n}+16).$
		\item $\sum_{\bsj\in \mathcal{J}_{10}}2^{|\bsj|}\sum_{\bsm\in\DD_{\bsj}}|\tilde{\mu}_{\bsj,\bsm}|^2=\frac{1}{3\cdot 2^{4n+6}}(2^{2n}-4)-\frac{1}{2^{4n+6}}\sum_{i=1}^{n-1}a_i2^{2i}$.
		\item $\sum_{\bsj\in \mathcal{J}_{11}}2^{|\bsj|}\sum_{\bsm\in\DD_{\bsj}}|\tilde{\mu}_{\bsj,\bsm}|^2=\frac19 2^{-4n-6}(5\cdot 4^n+4-24n)$.
		\item $\sum_{\bsj\in \mathcal{J}_{12}}2^{|\bsj|}\sum_{\bsm\in\DD_{\bsj}}|\tilde{\mu}_{\bsj,\bsm}|^2=\frac{1}{3\cdot 2^{4n+6}}(n(2^{2n}+8)-2(2^{2n}+2))$.
		\item $\sum_{\bsj\in \mathcal{J}_{13}}2^{|\bsj|}\sum_{\bsm\in\DD_{\bsj}}|\tilde{\mu}_{\bsj,\bsm}|^2=\frac{1}{9\cdot 2^{4n+4}}(2^{2n+1}-1)$.
	\end{itemize}
\end{lemma}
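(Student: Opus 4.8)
The plan is to insert the explicit expressions for $\tilde{\mu}_{\bsj,\bsm}$ supplied by the preceding Proposition into Parseval's identity~\eqref{parseval}, computing $2^{|\bsj|}\sum_{\bsm\in\DD_{\bsj}}|\tilde{\mu}_{\bsj,\bsm}|^2$ and then summing over $\bsj\in\mathcal{J}_i$ separately for each $i\in\{1,\dots,13\}$, in exact parallel to Section~\ref{haark}. Since $\tilde{\mu}_{\bsj,\bsm}=\mu_{\bsj,\bsm}^{\vecs}+\mu_{\bsj,\bsm}^{\vecs^*}$, every sum that arises has the same shape as its counterpart in the unsymmetrized computation, so the same two elementary devices suffice throughout: first, replacing a block of free binary digits $t_i$ (or $r_i,s_i$) by a single integer $l$ running over a dyadic range $\{0,\dots,2^m-1\}$; and second, evaluating the resulting arithmetic and quadratic sums $\sum_l 1$, $\sum_l l$, $\sum_l l^2$ in closed form.

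First I would dispatch all cases in which $\tilde{\mu}_{\bsj,\bsm}$ is (essentially) constant in $\bsm$, namely $\mathcal{J}_1,\mathcal{J}_4,\mathcal{J}_5,\mathcal{J}_6,\mathcal{J}_7,\mathcal{J}_{13}$, together with $\mathcal{J}_9$, where the two rank-one factors are each $\pm 1$ so that $\tilde{\mu}_{\bsj,\bsm}^2=2^{-4n-4}$ is outright constant. Here $2^{|\bsj|}\sum_{\bsm}|\tilde{\mu}|^2=2^{2|\bsj|}\tilde{\mu}^2$ and the outer sum is either a single term or a geometric series in $j_1$ or $j_2$, reproducing the stated closed forms directly. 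The case $\mathcal{J}_2$ is only slightly more involved: $\tilde{\mu}_{\bsj,\bsm}$ is still independent of $\bsm$, but its square expands into a constant piece, a cross term proportional to $a_{j_2+1}(2(\sigma_{j_2+1}\oplus\sigma_n)-1)$, and a quadratic piece $a_{j_2+1}^2$. Summing $2^{2j_2}$ times these over $j_2=0,\dots,n-2$ produces, respectively, the geometric main term, the $(-1)^{\sigma_n}L$ contribution (using $L=\sum a_i(1-2\sigma_i)$), and the $\sum_i a_i 2^{2i}$ contribution; the analogous computation for $\mathcal{J}_8$ is the same once one observes that the factor $\sigma_{j_2+1}+\sigma_{j_2+1}'-1$ vanishes precisely when $a_{j_2+1}=1$.

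The cases $\mathcal{J}_3$ and $\mathcal{J}_{10}$ form an intermediate group in which $\tilde{\mu}_{\bsj,\bsm}$ is linear in the digits $s_k$. Expanding $|\tilde{\mu}|^2$ gives a constant term, a term linear in the $s_k$, and a quadratic term. Summing over each $s_k\in\{0,1\}$, the linear parts collapse to elementary one-digit averages, while the quadratic off-diagonal terms vanish by the parity identity $\sum_{s_k=0}^1(2(s_k\oplus\sigma_k)-1)=0$ and only the diagonal survives; this is exactly how the weights $\sum_i a_i 2^{2i}$ (respectively $\sum_i(1-a_i)2^{2i}$) appear in these two cases. These computations are routine variants of those in the Lemmas for $\mathcal{J}_3$ and $\mathcal{J}_{10}$ in Section~\ref{haark}.

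The main obstacle will be $\mathcal{J}_{11}$ and $\mathcal{J}_{12}$. In both, $\tilde{\mu}_{\bsj,\bsm}$ is a sum of products of a tent-shaped factor $(1-|1-\cdots|)$ with a linear factor $v$ or $1-v$, so squaring produces several mixed products that must be summed over the free digits of $m_1$ and of $m_2$ simultaneously. Following the proofs of the corresponding Lemmas in Section~\ref{haark}, the strategy is to sum over the pivotal digit $r_1$ first, which interchanges $\sigma_n\leftrightarrow 1\oplus\sigma_n$ and thereby renders the contributions of $\vecs$ and $\vecs^*$ symmetric, then to collapse the remaining digits of $m_1$ and $m_2$ into two independent integers $l_1,l_2$ over dyadic ranges, and finally to evaluate a double sum of a quadratic polynomial in $l_1/2^{j_1}$ and $l_2/2^{j_2}$. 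For $\mathcal{J}_{12}$ one must additionally keep track of the solvability of the linear system~\eqref{system}: for each fixed $m_1$ exactly $2^{n-j_1}$ choices of $m_2$ yield a tent contribution while the remaining $2^{j_2}-2^{n-j_1}$ contribute only the constant $-2^{-2j_1-2j_2-4}$, and the ranges $j_1+j_2=n$ and $j_1+j_2\ge n+1$ must be treated separately. Managing these two regimes, and the doubling of terms caused by the symmetrization, is the delicate bookkeeping; once it is in place, everything reduces to the geometric and power sums already used above, and summing the thirteen contributions yields the asserted identities.
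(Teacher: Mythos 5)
Your proposal is correct and takes essentially the same route as the paper: the paper in fact states this lemma \emph{without} proof, remarking only that the cases are either easy or run parallel to Section~\ref{haark}, and your case-by-case plan -- the constant-coefficient cases $\mathcal{J}_1,\mathcal{J}_4,\mathcal{J}_5,\mathcal{J}_6,\mathcal{J}_7,\mathcal{J}_9,\mathcal{J}_{13}$, the parity cancellations that produce the $(-1)^{\sigma_n}L$ term and the weights $\sum_i a_i2^{2i}$ (resp.\ $\sum_i(1-a_i)2^{2i}$) in $\mathcal{J}_2,\mathcal{J}_3,\mathcal{J}_8,\mathcal{J}_{10}$, and the $r_1$-first summation together with the $2^{n-j_1}$ solvability count for $\mathcal{J}_{11},\mathcal{J}_{12}$ -- is exactly that parallel computation and reproduces the stated closed forms. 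One caution for the execution: since $\widetilde{\cP}_{\bsa}(\vecs)$ has $2^{n+1}$ points, the correct relation is $\tilde{\mu}_{\bsj,\bsm}=\tfrac12\bigl(\mu^{\vecs}_{\bsj,\bsm}+\mu^{\vecs^*}_{\bsj,\bsm}\bigr)$ (as used in the paper's Section 5), not the unnormalized sum quoted in Section 4 which you repeat; the lemma's entries (e.g.\ the value $\bigl(\tfrac{1}{2^{n+1}}+\tfrac{1}{2^{2n+2}}\bigr)^2$ for $\mathcal{J}_1$) are consistent only with the averaged coefficients, so omitting the factor $\tfrac12$ would inflate every entry by a factor of $4$.
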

We obtain Theorem 2 via $(L_2(\widetilde{\cP}_{\bsa}(\vecs)))^2=\sum_{i=1}^{13}\sum_{\bsj\in \mathcal{J}_i}2^{|\bsj|}\sum_{\bsm\in\DD_{\bsj}} |\tilde{\mu}_{\bsj,\bsm}|^2$.

\section{Why do (symmetrized) digital nets fail to have the optimal order of $L_2$ discrepancy?} \label{why}

In many previous papers (e.g. \cite{daven,lp}) it has been observed that the reason that a point set fails to have the optimal order of $L_2$ discrepancy can often
be found in the zeroth Fourier coefficient of the corresponding discrepancy function (which is the same as the Haar coefficient for $\bsj=(-1,-1)$).
This recurring phenomenon led to the following conjecture by Bilyk~\cite{billat}:

Whenever an $N$-element point set $\cP$ in $[0,1)^2$ satisfies $L_{\infty}(\cP)\lr (\log{N})/N$ (i.e. its star discrepancy is of best possible order in $N$) and $L_{2}(\cP)\gr (\log{N})/N$, then $\cP$ should also satisfy
  $$ \left|\int_{[0,1)^2} \Delta(\bst,\cP) \rd\bst\right| \gr \frac{\log{N}}{N}. $$
  We can deduce from our previous results that it is not true. Consider the point set $\cP_{\bsone}$, where
  $\bsone=(1,\dots,1)\in \ZZ_2^{n-1}$. Then by Proposition 1 we have $\mu_{(-1,-1),(0,0)}=2^{-2n-2}+5\cdot 2^{-n-3}\leq 1/N$, but
  $L_2(\cP_{\bsone})\gr (\log{N})/N$, which follows from Corollary~\ref{coro1}. Note that $L_{\infty}(\cP_{\bsone})\lr (\log{N})/N$, since $\cP_{\bsone}$ is a $(0,n,2)$ net. Hence $\cP_{\bsone}$ is a counterexample to Bilyk's conjecture. More generally, we observe that none of the nets $\cP_{\bsa}(\vecs)$ achieve the optimal order of $L_2$ discrepancy. The reason for this defect is that for all $\bsa$ at least one of the inequalities $\mu_{(-1,-1),(0,0)}\gr (\log{N})/N$ or $\mu_{(0,-1),(0,0)}\gr (\log{N})/N$ holds; hence in some cases the Haar coefficient for $\bsj=(-1,-1)$ is not the one causing trouble. \\
  We would like to point out that one can find an earlier counterexample to the above conjecture in~\cite{lp}. To state the result, we consider the digital $(0,n,2)$-net
  generated by the matrices $C_1=A_1$ as on page~\pageref{matrixa} and the matrix
  $$ C_2=
\begin{pmatrix}
1 & 0 & 0 & \cdots & 0 & 0 & 0 \\
1 & 1 & 0 & \cdots &  0 & 0 & 0 \\
1 & 0 & 1 & \cdots & 0 & 0 & 0 \\
\vdots & \vdots & \vdots & \ddots & \vdots & \vdots & \vdots & \\
1 & 0 & 0 & \cdots &  1 & 0 & 0 \\
1 & 0 & 0 & \cdots &  0 & 1 & 0 \\
1 & 0 & 0 & \cdots &  0 & 0 & 1 \\
\end{pmatrix},
$$ 
which we call $\cP_c$. We denote its shifted version by $\cP_c(\vecs)$.
The following theorem has been proven by Larcher and Pillichshammer in~\cite[Theorem 1]{lp} and shows that not every symmetrized digital net
achieves the optimal order of $L_2$ discrepancy. Their proof is based on a Walsh function analysis of the discrepancy function.
Here we shall give a new proof based on Haar functions.

\begin{theorem}[Larcher and Pillichshammer]
  The $L_2$ discrepancy of the symmetrized point set $\cP_c^{\sym}:=\cP_c \cup \{(x,1-y):(x,y)\in \cP_c\}$ with $N=2^{n+1}$ elements satisfies
  $$ L_2(\cP_c^{\sym})\gr \frac{\log{N}}{N}. $$
	(Note that $\mu_{(-1,-1),(0,0)}(\Delta(\cdot,\cP_c^{\sym}))=2^{-n-2}$ and $L_{\infty}(\cP_c^{\sym})\lr (\log{N})/N$.)
\end{theorem}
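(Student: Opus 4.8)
The plan is to follow the Haar-analytic method of Sections~\ref{haarf}--\ref{haark}: compute Haar coefficients $\tilde\mu_{\bsj,\bsm}$ of $\Delta(\cdot,\cP_c^{\sym})$ and invoke Parseval's identity~\eqref{parseval}. For a \emph{lower} bound the task is much lighter than for the exact formulas of the previous sections, because the right-hand side of~\eqref{parseval} is a sum of nonnegative terms: it suffices to exhibit a single index $(\bsj,\bsm)$ for which $2^{|\bsj|}|\tilde\mu_{\bsj,\bsm}|^2 \gr ((\log N)/N)^2 \asymp n^2 4^{-n}$. The note accompanying the statement tells us that the usual offender, the coefficient at $\bsj=(-1,-1)$, is only of size $2^{-n-2}\asymp 1/N$ here; this is exactly the Bilyk-counterexample phenomenon, and it forces us to locate the large coefficient elsewhere.

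The candidate I would single out is $\tilde\mu_{(-1,0),(0,0)}$, the coefficient of the tensor $h_{-1,0}\otimes h_{0,0}$, which is constant in $t_1$ and the coarsest Haar function in $t_2$. Applying formula~\eqref{art3} with $j_2=0$ and $N=2^{n+1}$ points gives
\[
 \tilde\mu_{(-1,0),(0,0)} = -2^{-n-2}\sum_{\bsz\in\cP_c^{\sym}}\big(1-|1-2z_2|\big)(1-z_1) + 2^{-3}.
\]
The decisive structural observation is that the reflection $(x,y)\mapsto(x,1-y)$ leaves the tent factor $1-|1-2z_2|$ \emph{invariant} while fixing $z_1$; hence this coefficient is reflection-even and is \emph{not} annihilated by the symmetrization, in contrast to $\bsj=(-1,-1)$ and $\bsj=(0,-1)$, whose $z_2$-dependent part is averaged away by the symmetrization and which therefore shrink to size $\lr 1/N$. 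Consequently the symmetrized sum is exactly twice the sum over $\cP_c$, so that $\tilde\mu_{(-1,0),(0,0)} = -2^{-n-1}\sum_{\bsz\in\cP_c}(1-|1-2z_2|)(1-z_1)+2^{-3}$.

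It remains to evaluate this sum explicitly. Here I would use that $C_1=A_1$ makes the first coordinates run through $x_r=r/2^{n}$, and that the all-ones first column of $C_2$ makes the lowest bit of $r$ select the half of the unit interval containing $z_2$, while the remaining bits determine the tent value $1-|1-2z_2|$ up to a term of order $2^{-n}$. Splitting the sum according to the parity of $r$ and summing the resulting arithmetic progressions, I expect the leading term $2^{n-2}$ to cancel the constant $2^{-3}$ exactly, leaving the first-order deviation
\[
 \tilde\mu_{(-1,0),(0,0)} = (n-2)\,2^{-n-3} + 2^{-2n-2} \asymp \frac{n}{2^{n}}.
\]
Since $2^{|(-1,0)|}=1$, dropping all other terms of~\eqref{parseval} then yields $L_2(\cP_c^{\sym})^2 \ge |\tilde\mu_{(-1,0),(0,0)}|^2 \gr n^2 4^{-n} \asymp ((\log N)/N)^2$, which is the claim.

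The main obstacle is this final evaluation, which is a delicate extraction of a \emph{sub-leading} term: the quantity $\sum_{\bsz\in\cP_c}(1-|1-2z_2|)(1-z_1)$ is dominated by its ``expected'' value $2^{n-2}$, cancelled precisely by the $2^{-3}$ coming from the linear part $-t_1t_2$ of the discrepancy function, so the surviving $\asymp n/2^{n}$ contribution arises only from the $O(1)$ corrections to two arithmetic sums and the bookkeeping of those lower-order terms must be done carefully. The conceptual point to settle beforehand is the reflection-parity dichotomy that explains \emph{why} this coefficient survives symmetrization while the customary ones at $(-1,-1)$ and $(0,-1)$ collapse.
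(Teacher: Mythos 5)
Your proposal is correct, and at its core it is the paper's own proof: both arguments single out the Haar coefficient at $\bsj=(-1,0)$, $\bsm=(0,0)$, show that it is of size $\asymp n2^{-n}$, and discard all the other nonnegative terms in Parseval's identity~\eqref{parseval}. The two routes differ only in how the symmetrization is treated, and here your version is tidier. The paper does not work with $\cP_c^{\sym}$ itself: it replaces it by the shift-symmetrized net $\widetilde{\cP}_c(\bszero)=\cP_c(\bszero)\cup\cP_c(\bszero^{*})$, quoting \cite[Lemma 4]{HKP14} to absorb the resulting difference of at most $2^{-n}$ in $L_2$ discrepancy, then computes $\mu^{\vecs}_{(-1,0),(0,0)}$ for an \emph{arbitrary} shift $\vecs$ and averages the coefficients of the two shifted nets to obtain $\tilde{\mu}^{\bszero}_{(-1,0),(0,0)}=2^{-n-3}(n-2)$. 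Your observation that the tent factor $1-|1-2z_2|$ is invariant under the reflection $y\mapsto 1-y$ removes both detours: the coefficient of $\cP_c^{\sym}$ equals that of $\cP_c$ exactly (the sum doubles while the normalizing prefactor in~\eqref{art3} halves), so neither the approximation lemma nor the general-shift computation is needed; moreover your value $(n-2)2^{-n-3}+2^{-2n-2}$ is exactly what the paper's formula for $\mu^{\bszero}_{(-1,0),(0,0)}$ gives, so your sketch lands on the right number. The one step you leave unexecuted --- the evaluation of $\sum_{\bsz\in\cP_c}(1-|1-2z_2|)(1-z_1)$, in which the $n$-term arises from the diagonal correlations between the digit-reversed first coordinate and the digits determining the tent value of the second --- is precisely the computation the paper carries out in full, so nothing in your plan would fail there. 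Your parity dichotomy explaining why the coefficients at $(-1,-1)$ and $(0,-1)$ collapse to size $\lr 1/N$ under symmetrization while the one at $(-1,0)$ survives is also correct, and is consistent with the paper's remark that $\mu_{(-1,-1),(0,0)}(\Delta(\cdot,\cP_c^{\sym}))=2^{-n-2}$.
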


\begin{proof}
Instead of $\cP_c^{\sym}$ we investigate the $L_2$ discrepancy of the point set $\widetilde{\cP}_c(\vecs)=\cP_c(\vecs) \cup \cP_c(\vecs^*)$,
as the difference between $L_2(\cP_c^{\sym})$ and $L_2(\widetilde{\cP}_c(\bszero))$ is at most $2^{-n}$. The argumentation of the last statement can be found in~\cite[Lemma 4]{HKP14}. Let $\mu_{\bsj,\bsm}^{\vecs}$ denote the Haar coefficients of $\Delta(\cdot,\cP_c(\vecs))$ and $\tilde{\mu}_{\bsj,\bsm}^{\vecs}$ the Haar coefficients of $\Delta(\cdot,\widetilde{\cP}_c(\vecs))$.
The idea of the proof is as follows: By Parseval's identity we have $L_2(\widetilde{\cP}_c(\bszero))\geq \tilde{\mu}_{(-1,0),(0,0)}^{\bszero}$. We will show
$\tilde{\mu}_{(-1,0),(0,0)}^{\bszero}\gr \frac{\log{N}}{N}$, which yields the result. \\
In order to calculate $\tilde{\mu}_{(-1,0),(0,0)}^{\vecs}$, we first compute $\mu_{(-1,0),(0,0)}^{\vecs}$ for an arbitrary shift.
We write
  $$ \cP_c(\vecs)=\left\{\bigg(\frac{t_n}{2}+\dots+\frac{t_1}{2^n},\frac{t_1\oplus \sigma_1}{2}+\dots+\frac{t_1\oplus t_n \oplus \sigma_n}{2^n}\bigg):t_1,\dots, t_n \in\{0,1\}\right\}. $$
   For a point $\bsz=(z_1,z_2)\in \cP_c$ we have
\begin{align*}
   &\sum_{\bsz\in\cP_c}(1-z_1)(1-|2m_2+1-2^{j_2+1}z_2|) \\ =&\sum_{t_1,\dots,t_n=0}^1 \left(1-\frac{t_n}{2}-\dots-\frac{t_1}{2^n}\right)\left(1-\left|1-t_1\oplus \sigma_1-\frac{t_1\oplus t_2 \oplus \sigma_2}{2}-\dots-\frac{t_1 \oplus t_n \oplus \sigma_n}{2^{n-1}}\right|\right) \\
   =&\sum_{t_2,\dots,t_n=0}^1 \left\{\left(1-u-\frac{\sigma_1}{2^n}\right)v+\left(1-u-\frac{\sigma_1\oplus 1}{2^n}\right)(1-v)\right\} \\
   =&\sum_{t_2,\dots,t_n=0}^1 \left\{-2^{-2n-2}+2^{-n+1}+2^{-2n+1}\sigma_1-2^{-n+1}u+2v-2^{-n}v-2uv\right\},
\end{align*}
where $u=2^{-1}t_n+\dots+2^{-n+1}t_2$ and $v=2^{-1}(t_1\oplus t_2 \oplus \sigma_2)+\dots+2^{-n+1}(t_1\oplus t_n \oplus\sigma_n)$.
We have $\sum_{t_2,\dots,t_n=0}^1 u=\sum_{t_2,\dots,t_n=0}^1 v=2^{n-2}-2^{-1}$; hence it remains to investigate $\sum_{t_2,\dots,t_n=0}^1 uv$. We have
\begin{align*}
  \sum_{t_2,\dots,t_n=0}^1 uv=& \sum_{k=2}^n \sum_{t_2,\dots,t_n=0}^1 \frac{t_k(t_k\oplus \sigma_k \oplus \sigma_1)}{2^{n+1-k}2^{k-1}}+
       \sum_{\substack{k_1,k_2=2 \\ k_1\neq k_2}}^n \frac{t_{k_1}(t_{k_2}\oplus \sigma_{k_2}\oplus \sigma_1)}{2^{n+1-k_1}2^{k_2-1}} \\
       =& \frac{1}{2^n}\sum_{k=2}^n 2^{n-2} \sum_{t_k=0}^1 t_k (t_k \oplus \sigma_k\oplus \sigma_1)\\&+\frac{1}{2^n}\sum_{\substack{k_1,k_2=2 \\ k_1\neq k_2}}^n2^{k_1-k_2}2^{n-3}\sum_{t_{k_1},t_{k_2}=0}^1 t_{k_1}(t_{k_2}\oplus \sigma_{k_2}\oplus \sigma_1) \\
       =&\frac14 \sum_{k=2}^n (1 \oplus \sigma_k\oplus \sigma_1)+\frac18 \sum_{\substack{k_1,k_2=2 \\ k_1\neq k_2}}^n2^{k_1-k_2}.
\end{align*}
Combining our results with~\eqref{art3} yields $$\mu_{(-1,0),(0,0)}=2^{-2n-2}-2^{-n-3}n-2^{-2n-1}\sigma_1+2^{-n-2}\sum_{k=2}^n 2^{n-2} (1 \oplus \sigma_k\oplus \sigma_1).$$
Since $\tilde{\mu}_{(-1,0),(0,0)}^{\vecs}=\frac12(\mu_{(-1,0),(0,0)}^{\vecs}+\mu_{(-1,0),(0,0)}^{\vecs^*})$ we derive
$$\tilde{\mu}_{(-1,0),(0,0)}^{\vecs}=-2^{-n-3}n+2^{-n-2}\sum_{k=2}^n  (1 \oplus \sigma_k\oplus \sigma_1).$$
In particular, for $\vecs=\bszero$ we find $\widetilde{\mu}_{(-1,0),(0,0)}^{\bszero}=2^{-n-3}(n-2)\gr \frac{\log{N}}{N}$ and we are done.
\end{proof}

\section{Further results}

Our method as presented in this paper is not restricted to the class of digital nets $\cP_{\bsa}(\vecs)$. For instance, it is near at hand to also study
the nets $\cP_{\bsc}(\vecs)$ generated by $C_1=A_1$ as on page~\pageref{matrixa} and
$$ C_2=
\begin{pmatrix}
1 & 0 & 0 & \cdots & 0 & 0 & 0 \\
c_2 & 1 & 0 & \cdots &  0 & 0 & 0 \\
c_3 & 0 & 1 & \cdots & 0 & 0 & 0 \\
\vdots & \vdots & \vdots & \ddots & \vdots & \vdots & \vdots & \\
c_{n-2} & 0 & 0 & \cdots &  1 & 0 & 0 \\
c_{n-1} & 0 & 0 & \cdots &  0 & 1 & 0 \\
c_n & 0 & 0 & \cdots &  0 & 0 & 1 \\
\end{pmatrix},
$$ 
where we write $\bsc=(c_2,\dots,c_n)$ and again we apply a digital shift $\vecs=(\sigma_1,\dots,\sigma_n)$ to the second components of the points. We simply write $\cP_{\bsc}$ if we do not apply a shift. Further we put $\widetilde{\cP}_{\bsc}(\vecs):=\cP_{\bsc}(\vecs) \cup \cP_{\bsc}(\vecs^{\ast})$. There are many parallel tracks in the computation of the Haar coefficients of $\Delta(\cdot,\cP_{\bsa}(\vecs))$ and $\Delta(\cdot,\cP_{\bsc}(\vecs))$. We leave it as a (tedious) exercise to the reader to show the following theorem
with the method demonstrated in Section~\ref{haark}.

\begin{theorem} \label{theo3}
    Let $L=\sum_{i=2}^{n}c_i (1-2\sigma_i)$ and $\ell=\sum_{i=1}^n (1-2\sigma_i)$. Then we have
   \begin{align*}
     (2^n\,L_2(\cP_{\bsc}(\vecs)))^2=& \frac{1}{64}\left((\ell-L)^2+L^2+8\ell+2L(2\sigma_1-5)+\frac53 n\right) \\
         &-\frac{1}{2^{n+4}}\left(\ell-4\right)+\frac{3}{8}-\frac19\frac{1}{2^{2n+3}}.
   \end{align*}
\end{theorem}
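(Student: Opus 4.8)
The plan is to mirror the thirteen-case analysis of Section~\ref{haark} step by step, recomputing every Haar coefficient $\mu_{\bsj,\bsm}$ of $\Delta(\cdot,\cP_{\bsc}(\vecs))$ and then assembling the squared $L_2$ discrepancy through Parseval's identity~\eqref{parseval}, exactly as in the proof of Theorem~\ref{theo1}. First I would record the explicit form of the point set: with $C_1=A_1$ and the new $C_2$, a point $\bsz=(z_1,z_2)$ has $z_1=\frac{t_n}{2}+\dots+\frac{t_1}{2^n}$ and $z_2=\frac{b_1}{2}+\dots+\frac{b_n}{2^n}$, where now $b_1=t_1\oplus\sigma_1$ and $b_k=c_kt_1\oplus t_k\oplus\sigma_k$ for $k\in\{2,\dots,n\}$, and I would set $\sigma_k':=\sigma_k\oplus c_k$ in analogy with Section~\ref{haark}. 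The decisive structural difference from $\cP_{\bsa}(\vecs)$ is that the \emph{coupling digit} is $t_1$ instead of $t_n$: in $\cP_{\bsa}(\vecs)$ the shared digit $t_n$ is the most significant digit of $z_1$ and is tied to the least significant digit $b_n$ of $z_2$, whereas here the shared digit $t_1$ is the \emph{least} significant digit of $z_1$ and is tied to the \emph{most} significant digit $b_1=t_1\oplus\sigma_1$ of $z_2$. This swap is what forces the reindexing in Theorem~\ref{theo3}, with $\sigma_1$ taking over the role that $\sigma_n$ played in Theorem~\ref{theo1}.

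With this set-up I would keep the membership condition~\eqref{cond} and the identities~\eqref{z1},~\eqref{z2} unchanged, since these depend only on $C_1=A_1$ and on the abstract digits $b_k$, and then work through the cases $\mathcal{J}_1,\dots,\mathcal{J}_{13}$. The cases in which no point lies in the interior of $I_{\bsj,\bsm}$, namely $\mathcal{J}_4$, $\mathcal{J}_7$ and $\mathcal{J}_{13}$, are literally identical to Section~\ref{haark}: only the linear part $-t_1t_2$ of the discrepancy function contributes and $\mu_{\bsj,\bsm}$ depends on neither $\bsc$ nor $\vecs$. For $\mathcal{J}_1=\{(-1,-1)\}$ I would recompute $2^{-n}\sum_{\bsz}z_1z_2$ by splitting off the coupling digit $t_1$ as the least significant digit of $z_1$ (writing $z_1=w+t_1 2^{-n}$ and distinguishing $t_1=0,1$) and evaluating the two resulting products by the same geometric-sum manipulations that gave~\eqref{formelv1} and~\eqref{formelv2}; the new coupling replaces the sums over $1\oplus\sigma_k$ and $1\oplus\sigma_k'$ by their $\bsc$-analogues and produces the reindexed $L$ and $\ell$.

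The genuinely new bookkeeping occurs in the cases where the coupling digit is not yet pinned down. Because $b_1=t_1\oplus\sigma_1$, imposing $\bsz\in I_{\bsj,\bsm}$ with $j_2\ge 1$ already fixes $t_1=s_1\oplus\sigma_1$, while $t_1$ is fixed by the first coordinate only when $j_1=n$; thus the roles of the two coordinates in capturing the coupling digit are exactly reversed relative to $\cP_{\bsa}(\vecs)$. Consequently the distinction between free and fixed digits, and the parity cancellations of the form $\sum_t(2v-1)=\text{const}$ that drive Section~\ref{haark}, must be reorganised. The cases $\mathcal{J}_2,\mathcal{J}_5,\mathcal{J}_6,\mathcal{J}_8,\mathcal{J}_{10}$ with one index in $\{-1,0\}$ and the mixed cases $\mathcal{J}_9,\mathcal{J}_{11},\mathcal{J}_{12}$ are the ones requiring real care: for each I would fix the digits forced by~\eqref{cond}, sum the tent-function products over the remaining free digits using the averages $\sum_{l}l/2^{m}=2^{m-1}-\tfrac12$, and read off $\mu_{\bsj,\bsm}$ from~\eqref{art2}--\eqref{art4}. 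I would then compute $\sum_{\bsm\in\DD_{\bsj}}|\mu_{\bsj,\bsm}|^2$ and sum $2^{|\bsj|}$ over each $\mathcal{J}_i$.

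The final step is to add the thirteen contributions and collect terms into $L=\sum_{i=2}^n c_i(1-2\sigma_i)$ and $\ell=\sum_{i=1}^n(1-2\sigma_i)$. The main obstacle is precisely this reorganisation: tracking, in the mixed cases, exactly which of $t_1,\dots,t_n$ are fixed once the coupling digit $t_1$ is tied to $b_1$ rather than $b_n$, and verifying that the surviving terms reassemble into the quadratic form $(\ell-L)^2+L^2$ together with the linear corrections. I expect this is where the term $2L(2\sigma_1-5)$ appears and where the $2\sigma_nL\,2^{-n-4}$ contribution of Theorem~\ref{theo1} disappears, the shift $\sigma_1$ on the coupling digit now entering at main order rather than only in the exponentially small correction. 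As in the original proof, the calculation is long and error-prone, so I would cross-check the resulting formula against Warnock's formula~\cite{Warn} for small $n$.
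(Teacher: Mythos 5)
Your proposal follows exactly the route the paper prescribes for Theorem~\ref{theo3}: the paper gives no detailed proof of this statement, instead declaring that it follows by the thirteen-case Haar-coefficient computation of Section~\ref{haark} applied to $\cP_{\bsc}(\vecs)$ and leaving the details as a ``tedious exercise,'' which is precisely your plan. Moreover, your structural groundwork is correct and consistent with the stated formula: the point set indeed has $b_1=t_1\oplus\sigma_1$ and $b_k=c_kt_1\oplus t_k\oplus\sigma_k$ for $k\ge 2$ (compare the explicit form of $\cP_c(\vecs)$ used in Section 5 for $\bsc=\bsone$), the coupling digit $t_1$ is now pinned down by the second coordinate (any $j_2\ge 1$) rather than the first, and this is exactly why the shift $\sigma_1$ enters the main term $2L(2\sigma_1-5)$ rather than only the exponentially small correction as $\sigma_n$ does in Theorem~\ref{theo1}.
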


For the unshifted nets we find a result of the very same form as Corollary~\ref{coro1}.

\begin{corollary} \label{coro2}
   Let $|\bsc|=\sum_{i=2}^{n}c_i$. Then we have
    \begin{align*}
     (2^n\,L_2(\cP_{\bsc}))^2=& \frac{1}{64}\left((n-|\bsc|)^2+|\bsc|^2-10|\bsc|+\frac{29}{3} n\right)+\frac{3}{8}-\frac{n-4}{2^{n+4}}-\frac19\frac{1}{2^{2n+3}}.
   \end{align*}
\end{corollary}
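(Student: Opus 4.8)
The plan is to obtain Corollary~\ref{coro2} as the immediate specialization of Theorem~\ref{theo3} to the unshifted case, in exactly the way Corollary~\ref{coro1} was derived from Theorem~\ref{theo1}. First I would set the digital shift to zero, that is, put $\sigma_i=0$ for all $i\in\{1,\dots,n\}$. With this choice the two quantities governing the formula in Theorem~\ref{theo3} collapse to
$$ L=\sum_{i=2}^n c_i(1-2\sigma_i)=\sum_{i=2}^n c_i=|\bsc| \qquad\text{and}\qquad \ell=\sum_{i=1}^n(1-2\sigma_i)=n, $$
and moreover $\sigma_1=0$, so that every remaining dependence on the shift disappears from the expression.

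Substituting these values into Theorem~\ref{theo3} is then purely a matter of bookkeeping. Inside the bracket, $(\ell-L)^2+L^2$ becomes $(n-|\bsc|)^2+|\bsc|^2$; the term $2L(2\sigma_1-5)$ becomes $2|\bsc|(0-5)=-10|\bsc|$; and the coefficient of $n$ becomes $8+\tfrac53=\tfrac{29}{3}$, so the bracket reads $(n-|\bsc|)^2+|\bsc|^2-10|\bsc|+\tfrac{29}{3}n$. Outside the bracket, $-\tfrac{1}{2^{n+4}}(\ell-4)$ becomes $-\tfrac{n-4}{2^{n+4}}$, while $\tfrac38$ and $-\tfrac19\tfrac{1}{2^{2n+3}}$ are unaffected by the substitution. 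Collecting these pieces reproduces the claimed expression verbatim, so no further argument is required.

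Accordingly, there is no genuine obstacle at the level of Corollary~\ref{coro2} itself: the entire difficulty is concentrated in Theorem~\ref{theo3}, whose proof is the long Haar-coefficient computation carried out along the lines of Section~\ref{haark} (and which the statement of Theorem~\ref{theo3} explicitly leaves as an exercise). Granting that theorem, the only place where care is needed is the arithmetic collapse of the constant terms; I would double-check the emergence of the $\tfrac{29}{3}$ coefficient of $n$ and the sign of the $-10|\bsc|$ term, since those are the two points at which a computational slip would be most likely.
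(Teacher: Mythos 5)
Your proposal is correct and matches the paper's intended derivation exactly: the paper presents Corollary~\ref{coro2} as the unshifted specialization of Theorem~\ref{theo3}, in direct analogy with how Corollary~\ref{coro1} follows from Theorem~\ref{theo1} by setting $\sigma_i=0$ for all $i$, which gives $L=|\bsc|$, $\ell=n$, $\sigma_1=0$. Your bookkeeping (the $8\ell=8n$ merging with $\tfrac53 n$ to give $\tfrac{29}{3}n$, the term $2L(2\sigma_1-5)=-10|\bsc|$, and $-\tfrac{1}{2^{n+4}}(\ell-4)=-\tfrac{n-4}{2^{n+4}}$) is accurate.
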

However, there are major differences between the $L_2$ discrepancies of the symmetrized nets $\widetilde{\cP}_{\bsa}(\vecs)$ and $\widetilde{\cP}_{\bsc}(\vecs)$,
as the following theorem demonstrates. Since an exact computation of $\sum_{\bsj\in \mathcal{J}_i}2^{|\bsj|}\sum_{\bsm\in\DD_{\bsj}} |\tilde{\mu}_{\bsj,\bsm}|^2$ for $i\in\{11,12\}$
is very complicated, we avoid an exact calculation of the $L_2$ discrepancy. At least we can show that $$\sum_{\bsj\in \NN_{-1}^2 \setminus \{(-1,0)\}}2^{|\bsj|}\sum_{\bsm\in\DD_{\bsj}} |\tilde{\mu}_{\bsj,\bsm}|^2\lr n/2^{2n}$$ and $2^{|\bsj|}\sum_{\bsm\in\DD_{\bsj}} |\tilde{\mu}_{\bsj,\bsm}|^2=2^{-2n-6} \left(L^2-2(1-2\sigma_1)L+1\right)$ for $\bsj=(-1,0)$. Therefore the following result is a consequence of Parseval's identity.

\begin{theorem}
  Let $L$ be as in Theorem~\ref{theo3}. Then we have
 $L_2(\widetilde{\cP}_{\bsc}(\vecs))\lr \sqrt{\log{N}}$ if and only if $|L|\lr \sqrt{n}$. For the unshifted symmetrized nets
we have $L_2(\widetilde{\cP}_{\bsc})\lr \sqrt{\log{N}}$ if and only if $|\bsc|\lr \sqrt{n}$
\end{theorem}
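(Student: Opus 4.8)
The plan is to read the result off Parseval's identity~\eqref{parseval} applied to $f=\Delta(\cdot,\widetilde{\cP}_{\bsc}(\vecs))$, splitting the sum over $\bsj\in\NN_{-1}^2$ into the single level $\bsj=(-1,0)$ and all remaining levels. The two estimates recorded just before the theorem supply exactly what is needed: the remaining levels contribute $\sum_{\bsj\neq(-1,0)}2^{|\bsj|}\sum_{\bsm\in\DD_{\bsj}}|\tilde{\mu}_{\bsj,\bsm}|^2\lr n/2^{2n}$, while the level $\bsj=(-1,0)$ contributes precisely $2^{-2n-6}(L^2-2(1-2\sigma_1)L+1)$. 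Since $\widetilde{\cP}_{\bsc}(\vecs)$ has $N=2^{n+1}$ points, we have $\log N\asymp n$ and $N^2\asymp 2^{2n}$, so the optimal order satisfies $(\sqrt{\log N}/N)^2\asymp n/2^{2n}$; the whole question thus reduces to deciding when the $(-1,0)$ term is itself $\lr n/2^{2n}$.

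The key simplification I would make is to observe that, because $\sigma_1\in\{0,1\}$ forces $(1-2\sigma_1)^2=1$, the $(-1,0)$ term is a perfect square,
$$ L^2-2(1-2\sigma_1)L+1=(L-(1-2\sigma_1))^2. $$
In particular it is nonnegative, and since $|1-2\sigma_1|=1\lr\sqrt{n}$, the triangle inequality (used in both directions) yields $|L-(1-2\sigma_1)|\lr\sqrt{n}$ if and only if $|L|\lr\sqrt{n}$.

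Both implications are then immediate. For sufficiency, if $|L|\lr\sqrt{n}$ then $(L-(1-2\sigma_1))^2\lr n$, so the $(-1,0)$ term is $\lr n/2^{2n}$; adding the tail estimate gives $(L_2(\widetilde{\cP}_{\bsc}(\vecs)))^2\lr n/2^{2n}$, i.e. the optimal order $\sqrt{\log N}/N$. For necessity I would use that every summand in Parseval is nonnegative, so that
$$ (L_2(\widetilde{\cP}_{\bsc}(\vecs)))^2\geq 2^{-2n-6}(L-(1-2\sigma_1))^2; $$
hence $L_2\lr\sqrt{\log N}/N$ forces $(L-(1-2\sigma_1))^2\lr n$ and therefore $|L|\lr\sqrt{n}$. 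For the unshifted nets one simply sets $\sigma_i=0$ for all $i$, which makes every $1-2\sigma_i=1$ and hence $L=\sum_{i=2}^{n}c_i=|\bsc|$, turning the criterion $|L|\lr\sqrt{n}$ into $|\bsc|\lr\sqrt{n}$.

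I expect this concluding step to be essentially free; the real work sits in the two inputs I am treating as given, and in particular in the tail bound $\sum_{\bsj\neq(-1,0)}2^{|\bsj|}\sum_{\bsm}|\tilde{\mu}_{\bsj,\bsm}|^2\lr n/2^{2n}$. Establishing it means running the thirteen-case analysis of Section~\ref{haark} again for the symmetrized $\bsc$-nets, and the hard part will be the levels $\mathcal{J}_{11}$ and $\mathcal{J}_{12}$, where (as noted in the text) one can only bound rather than evaluate the sums. The conceptual point to make there is that all of these contributions stay at or below the optimal order and so can never be responsible for a failure of it; the only level capable of pushing $L_2$ above $\sqrt{\log N}/N$ is $\bsj=(-1,0)$, which is precisely why the criterion depends solely on $L$.
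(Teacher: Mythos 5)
Your proposal is correct and follows essentially the same route as the paper, which likewise states the tail bound $\sum_{\bsj\neq(-1,0)}2^{|\bsj|}\sum_{\bsm}|\tilde{\mu}_{\bsj,\bsm}|^2\lr n/2^{2n}$ and the exact value $2^{-2n-6}\left(L^2-2(1-2\sigma_1)L+1\right)$ of the $(-1,0)$ level and then declares the theorem ``a consequence of Parseval's identity''. Your write-up merely makes that concluding step explicit (the perfect-square observation $(L-(1-2\sigma_1))^2$, the two-sided use of nonnegativity of the Parseval summands, and the specialization $\sigma_i=0$ giving $L=|\bsc|$), exactly as the paper intends.
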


\section{Results on the $L_p$ discrepancy}

The calculation of the Haar coefficients of the discrepancy allows us to study not only the $L_2$ discrepancy of point sets, but also the
$L_p$ discrepancy for all $p\in (1,\infty)$. The key tool for that purpose is the Littlewood-Paley inequality for Haar functions. It states that
for every function $f$ in $L_p([0,1)^2)$ with $p\in(1,\infty)$ we have the norm equivalence $\|f\|_{L_p([0,1)^2)}\asymp \|S(f)\|_{L_p([0,1)^2)}$, where 
\begin{equation*} \label{squfct} S(f):=\left(\sum_{\bsj\in\NN_{-1}^s,\bsm\in\DD_{\bsj}}2^{2|\bsj|} |\mu_{\bsj,\bsm}|^2 \bsone_{I_{\bsj,\bsm}}\right)^{\frac12}. \end{equation*}		
The Littlewood-Paley inequality enables us to give sufficient and necessary conditions for the points sets we study in this paper to achieve the optimal order of $L_p$ discrepancy. It is not necessary to work with the exact Haar coefficients to show these conditions. The following upper bounds on the Haar coefficients of $\Delta(\cdot,\cP_{\bsa}(\vecs))$ can be derived immediately from the propositions in Section~\ref{haark}.

\begin{corollary}
Let $\mu_{\bsj,\bsm}$ be the Haar coefficients of $\Delta(\cdot,\cP_{\bsa}(\vecs))$.
Let $\bsj=(j_1,j_2)\in \NN_{0}^2$. Then 
\bigskip
\begin{itemize}
  \item[(i)] if $j_1=0$ and $0\leq j_2 \leq n-2$ then $|\mu_{\bsj,\bsm}| \lr 2^{-n-j_2}$
  \item[(ii)] if $j_1+j_2<n-1$ and $j_1,j_2\ge 0$ then $|\mu_{\bsj,\bsm}| = 2^{-2n-2}$.
  \item[(iii)] if $j_1+j_2\ge n-1$ and $0\le j_1,j_2\le n$ then $|\mu_{\bsj,\bsm}| \lr 2^{-n-j_1-j_2}$ and
     $|\mu_{\bsj,\bsm}| = 2^{-2j_1-2j_2-4}$ for all but at most $2^n$ coefficients $\mu_{\bsj,\bsm}$ with $\bsm\in {\mathbb D}_{\bsj}$ (the latter appears if there is no point of $\cP_{\bsa}(\vecs)$ in the interior of $I_{\bsj,\bsm}$).
  \item[(iv)] if $j_1 \ge n$ or $j_2 \ge n$ then $|\mu_{\bsj,\bsm}| = 2^{-2j_1-2j_2-4}$.
 \end{itemize} 
 
 Now let $\bsj=(-1,j_2)$ with $j_2\in \NN_0$. Then
 \begin{itemize}
  \item[(v)] if $j_2<n$ then $|\mu_{\bsj,\bsm}| \lr 2^{-n-j_2}$.
  \item[(vi)] if $j_2\ge n$  then $|\mu_{\bsj,\bsm}| = 2^{-2j_2-3}$.
 \end{itemize}

 Next let $\bsj=(j_1,-1)$ with $j_1\in \NN_0$. Then
 \begin{itemize}
\item[(vii)] if $j_1=0$ then $|\mu_{\bsj,\bsm}| =\frac{1}{2^{2n+2}}-\frac{1}{2^{n+3}}+\frac{1}{2^{n+3}}L-\frac{1}{2^{2n+1}}\sigma_n$.
  \item[(viii)] if $1\leq j_1<n$ then $|\mu_{\bsj,\bsm}| \lr 2^{-n-j_1}$.
  \item[(ix)] if $j_1\ge n$  then $|\mu_{\bsj,\bsm}| = 2^{-2j_1-3}$.
 \end{itemize}

 Finally, if $\bsj=(-1,-1)$ then
  \begin{itemize}
   \item[(vi)] $\mu_{\bsj,\bsm}=\frac{1}{2^{n+1}}+\frac{1}{2^{2n+2}}+\frac{1}{2^{n+3}}(\ell-L).$
  \end{itemize} 
\end{corollary}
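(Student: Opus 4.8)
The plan is to match each regime of $\bsj$ in the statement with the corresponding one of the thirteen sets $\mathcal{J}_1,\dots,\mathcal{J}_{13}$ that partition $\NN_{-1}^2$ in Section~\ref{haark}, and then either read off the exact value or bound the exact formula term by term. The exact-value assertions are immediate substitutions: the formula for $\bsj=(-1,-1)$ is Proposition~\ref{prop1}; item~(vii) (the case $\bsj=(0,-1)$) is Proposition~\ref{prop5}; and items~(vi),~(ix) and~(iv) are the propositions treating $\mathcal{J}_4$, $\mathcal{J}_7$ and $\mathcal{J}_{13}$, respectively, where no point lies in the interior of $I_{\bsj,\bsm}$ and only the volume term survives. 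Item~(ii) is likewise immediate: for $\bsj\in\mathcal{J}_9$ the proposition expresses $\mu_{\bsj,\bsm}$ as $2^{-2n-2}$ times a product of two $\pm1$ factors, so $|\mu_{\bsj,\bsm}|=2^{-2n-2}$ exactly.

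For the four genuine upper bounds I would insert the exact formulas and estimate each summand with two elementary facts: every tent factor $1-|\cdots|$ and every partial sum $\sum_k 2^{k-1-j}(\cdots)$ occurring in the propositions lies in $[0,1]$, and the geometric estimate $\sum_{k=1}^{m}2^{k}\le 2^{m+1}$ controls the $\oplus$-weighted sums. Item~(v) uses the propositions for $\mathcal{J}_2$ and $\mathcal{J}_3$: the dominant term is $2^{-n-j_2-3}$, while the $\oplus$-sum contributes at most $2^{-2j_2-3}\cdot 2^{j_2-n+1}=2^{-n-j_2-2}$, and both are $\lr 2^{-n-j_2}$. Item~(i) is handled identically from the proposition for $\mathcal{J}_8$: since $|2s_k\oplus\sigma_k-1|=1$, the sum $\sum_{k=1}^{j_2}a_k(2s_k\oplus\sigma_k-1)2^{k-n}$ is bounded by $2^{j_2-n+1}$, so the first term is $\lr 2^{-n-j_2}$, and the constant $2^{-2n-2}$ is also $\lr 2^{-n-j_2}$ because $j_2\le n-2$. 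Item~(viii) comes from the proposition for $\mathcal{J}_6$; the only point requiring attention is that $\varepsilon(m_1)$ is multiplied by $2^{2n+1}$, but since $\varepsilon(m_1)\lr 2^{j_1-n}$ this product is $\lr 2^{n+j_1+1}$, and after the prefactor $2^{-2n-2j_1-3}$ one again obtains $\lr 2^{-n-j_1}$, which dominates the remaining three summands.

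Item~(iii) requires both a bound and a counting argument and is where I would be most careful. The estimate $|\mu_{\bsj,\bsm}|\lr 2^{-n-j_1-j_2}$ on $\mathcal{J}_{10}\cup\mathcal{J}_{11}\cup\mathcal{J}_{12}$ follows as above from the tent-factor bound, once one checks that the pure-volume term $2^{-2j_1-2j_2-4}$ is itself $\lr 2^{-n-j_1-j_2}$ on the range $j_1+j_2\ge n-1$: the ratio equals $2^{\,n-j_1-j_2-4}\le 2^{-3}$. For the second half of~(iii) --- that $|\mu_{\bsj,\bsm}|=2^{-2j_1-2j_2-4}$ for all but at most $2^n$ of the $\bsm$ --- I would invoke the structural observation already used for $\mathcal{J}_{12}$: by~\eqref{art4}, whenever the interior of $I_{\bsj,\bsm}$ contains no point of $\cP_{\bsa}(\vecs)$ the sum in~\eqref{art4} vanishes and the coefficient reduces to $-2^{-2j_1-2j_2-4}$. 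Since $\cP_{\bsa}(\vecs)$ has exactly $2^n$ points, at most $2^n$ of the disjoint boxes $I_{\bsj,\bsm}$ at a fixed level $\bsj$ can contain a point, so at most $2^n$ of the coefficients deviate from this value.

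In short, the proof is pure bookkeeping once the propositions of Section~\ref{haark} are available: each item is a direct substitution or a term-by-term geometric-series estimate. The only places demanding genuine care are the power-of-$2$ accounting in the bounds~(v),~(viii) and~(i), and the counting argument in~(iii), which is the sole step where a combinatorial fact rather than a direct reading of a proposition is needed.
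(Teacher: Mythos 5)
Your proposal is correct and takes exactly the paper's route: the paper simply asserts that this corollary ``can be derived immediately from the propositions in Section~\ref{haark}'', and your case-by-case matching of the regimes with $\mathcal{J}_1,\dots,\mathcal{J}_{13}$, the term-by-term geometric estimates for items~(i), (v), (viii), (iii), and the disjoint-box counting argument for the second half of~(iii) are precisely the routine details the paper omits. Your implicit reading of item~(ii) as the case $j_1\ge 1$ (i.e.\ $\mathcal{J}_9$) is also the correct one, since the case $j_1=0$ is governed by item~(i) and the proposition for $\mathcal{J}_8$, where the exact value $2^{-2n-2}$ need not hold.
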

We insert these bounds into the Littlewood-Paley inequality to show the following result.
The procedure of the proof is basically the same as in~\cite{HKP14}, where the result has been shown
for the Hammersley point set. Of course we can do the same for the class $\cP_{\bsc}(\vecs)$ of shifted nets.

\begin{theorem}
   Let $\ell$ and $L$ be as in Theorem~\ref{theo1} and $p\in (1,\infty)$. Then we have
	  $$ L_p(\cP_{\bsa}(\vecs))\lr_p \frac{\sqrt{\log{N}}}{N} $$
		if and only if $|\ell-L|\lr_p \sqrt{n}$ and $|L|\lr_p\sqrt{n}$ hold simultaneously. 
		An analogous result holds for the class of point sets $\cP_{\bsc}(\vecs)$
\end{theorem}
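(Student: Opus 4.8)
The plan is to run the $L_p$ estimate through the Littlewood--Paley inequality, which lets me replace $L_p(\cP_{\bsa}(\vecs))=\|f\|_{L_p}$ (with $f:=\Delta(\cdot,\cP_{\bsa}(\vecs))$) by $\|S(f)\|_{L_p}$, and then to read off the size of the square function from the uniform Haar coefficient bounds of the preceding corollary. The decisive observation is that the two coefficients $\mu_{(-1,-1),(0,0)}$ and $\mu_{(0,-1),(0,0)}$ sit on dyadic boxes with $|\bsj|=0$ whose support is all of $[0,1)^2$, so each contributes a \emph{constant} to $S(f)^2$. Writing $S(f)^2=|\mu_{(-1,-1),(0,0)}|^2+|\mu_{(0,-1),(0,0)}|^2+R$, where $R$ gathers the contributions of every remaining $\bsj$, the elementary estimates $\sqrt{a+b}\le\sqrt a+\sqrt b$ and $\sqrt{a+b}\ge\max\{\sqrt a,\sqrt b\}$ yield
$$ \|S(f)\|_{L_p}\asymp_p |\mu_{(-1,-1),(0,0)}|+|\mu_{(0,-1),(0,0)}|+\|R^{1/2}\|_{L_p}. $$
By Proposition~\ref{prop1} and Proposition~\ref{prop5} the first term satisfies $|\mu_{(-1,-1),(0,0)}|\lr 2^{-n}(1+|\ell-L|)$ and is $\gr 2^{-n}|\ell-L|$ once $|\ell-L|$ is large, and similarly $|\mu_{(0,-1),(0,0)}|\lr 2^{-n}(1+|L|)$ with $\gr 2^{-n}|L|$ for large $|L|$. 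Thus everything is reduced to the uniform background estimate $\|R^{1/2}\|_{L_p}\lr\sqrt n/2^n$.

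For the necessity direction I would argue by contraposition. If $|\ell-L|\not\lr\sqrt n$, then along the relevant subsequence $|\mu_{(-1,-1),(0,0)}|\gr 2^{-n}|\ell-L|$, and since this coefficient lives on the whole square we obtain $L_p(\cP_{\bsa}(\vecs))\asymp_p\|S(f)\|_{L_p}\ge|\mu_{(-1,-1),(0,0)}|\gr 2^{-n}|\ell-L|\not\lr\sqrt n/2^n$; the same reasoning applied to $\mu_{(0,-1),(0,0)}$ handles a failure of $|L|\lr\sqrt n$. Conversely, when both conditions hold the two isolated coefficients are each $\lr\sqrt n/2^n$, so only the background term remains to be bounded from above.

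The heart of the argument is therefore the uniform bound $\|R^{1/2}\|_{L_p}\lr\sqrt n/2^n$, which is where I expect the real work. Here I would split $R$ according to the regimes of the corollary. The interior boxes of type (ii), for which $|\mu_{\bsj,\bsm}|=2^{-2n-2}$ independently of $\bsm$, contribute to $S(f)^2$ the genuine constant $\sum_{k=0}^{n-2}(k+1)2^{2k}2^{-4n-4}\asymp n\,2^{-2n}$; this is already a pointwise lower bound $\gr n\,2^{-2n}$ on $R$, which both produces the factor $\sqrt{\log N}\asymp\sqrt n$ and is independent of $\bsa,\vecs$ and $p$. The boundary boxes of types (i), (v), (viii) and the diagonal/empty boxes of types (iii), (iv), (vi), (ix), where $|\mu_{\bsj,\bsm}|\lr 2^{-n-j_1-j_2}$ respectively $|\mu_{\bsj,\bsm}|=2^{-2j_1-2j_2-4}$, are then summed using the $(0,n,2)$-net property, which controls how many boxes at each level actually contain a point. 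To pass from these per-box bounds to a bound on $\|R^{1/2}\|_{L_p}$ for general $p\in(1,\infty)$ I would use the triangle inequality in $L_{p/2}$ when $p\ge 2$ and a duality argument when $p\in(1,2)$, exploiting that for fixed $\bsj$ the boxes $I_{\bsj,\bsm}$ are pairwise disjoint, and sum the resulting level contributions geometrically. This is exactly the scheme carried out for the Hammersley point set in~\cite{HKP14}; the main obstacle is the bookkeeping, namely verifying that, once the two dangerous coefficients have been removed, \emph{every} remaining box type contributes only $\lr\sqrt n/2^n$ rather than $\lr n/2^n$.

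Combining the three pieces shows that the conditions $|\ell-L|\lr\sqrt n$ and $|L|\lr\sqrt n$ are precisely what forces all three summands of $\|S(f)\|_{L_p}$ down to $\lr\sqrt n/2^n$, whereas the failure of either one makes $\|S(f)\|_{L_p}$ exceed $\sqrt n/2^n$; via Littlewood--Paley this is the claimed equivalence for $\cP_{\bsa}(\vecs)$. The statement for $\cP_{\bsc}(\vecs)$ follows along the same lines: Theorem~\ref{theo3} shows that $\mu_{(-1,-1),(0,0)}$ and $\mu_{(0,-1),(0,0)}$ carry exactly the same dependence on $\ell-L$ and on the (suitably redefined) $L$, while the remaining coefficients obey the same uniform bounds, so the background estimate is unchanged.
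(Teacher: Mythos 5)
Your proposal is correct and takes essentially the same route as the paper: the paper's entire proof of this theorem consists of inserting the Haar-coefficient bounds of the preceding corollary into the Littlewood--Paley inequality and following the procedure of~\cite{HKP14}, which is exactly your scheme of isolating the two coefficients $\mu_{(-1,-1),(0,0)}$ and $\mu_{(0,-1),(0,0)}$ (carrying $\ell-L$ and $L$, respectively, by Propositions~\ref{prop1} and~\ref{prop5}) and bounding the remaining square-function contribution by $\sqrt{n}/2^n$ using the per-level coefficient bounds and the net property. In fact, your write-up supplies more detail than the paper, which states the theorem without a written proof and simply defers to~\cite{HKP14}.
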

For the symmetrized nets we find the following conditions which assure the optimal order of $L_p$ discrepancy.
\begin{theorem}
  Let $p\in(1,\infty)$. We have 
	$$ L_p(\widetilde{\cP}_{\bsa}(\vecs))\lr \frac{\sqrt{\log{N}}}{N} $$
	for all $\bsa\in\ZZ_2^{n-1}$ and all $\vecs\in\ZZ_2^n$.
	We further have
	$$ L_p(\widetilde{\cP}_{\bsc}(\vecs))\lr \frac{\sqrt{\log{N}}}{N} $$
	if and only if $|L|\lr\sqrt{n}$.
\end{theorem}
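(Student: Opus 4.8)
The plan is to invoke the Littlewood--Paley inequality, which for $p\in(1,\infty)$ gives $L_p(\widetilde{\cP}_{\bsa}(\vecs))\asymp_p \|S(\Delta(\cdot,\widetilde{\cP}_{\bsa}(\vecs)))\|_{L_p([0,1)^2)}$, and then to estimate the $L_p$-norm of the square function from the exact Haar coefficients $\tilde{\mu}_{\bsj,\bsm}$ obtained in Section~4. Since the general lower bound~\eqref{roth} already yields $L_p\gr_p\sqrt{\log N}/N$, it suffices to prove the matching upper bounds, together with the lower bound that forces the condition on $L$ in the $\widetilde{\cP}_{\bsc}(\vecs)$ case. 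Throughout I use that $N=2^{n+1}$, so $\log N\asymp n$ and $\sqrt{\log N}/N\asymp \sqrt n\,2^{-n}$.

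First I would treat $\widetilde{\cP}_{\bsa}(\vecs)$. The decisive point is that symmetrization annihilates every coefficient that grows linearly in $n$: comparing the Proposition of Section~4 with Propositions~\ref{prop1} and~\ref{prop5}, the terms carrying $\ell-L$ at $\bsj=(-1,-1)$ and $L$ at $\bsj=(0,-1)$ have disappeared, leaving $\tilde{\mu}_{(-1,-1),(0,0)}=2^{-n-1}+2^{-2n-2}$ and $\tilde{\mu}_{(0,-1),(0,0)}=-2^{-n-3}$. A glance at the remaining cases of that Proposition shows that all $\tilde{\mu}_{\bsj,\bsm}$ then obey the same benign bounds as those collected in Section~7, i.e. $|\tilde{\mu}_{\bsj,\bsm}|\lr 2^{-n-j_1-j_2}$ in the relevant ranges, uniformly in $\bsa$ and $\vecs$. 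Feeding these into the square function therefore reproduces verbatim the estimate carried out for the symmetrized Hammersley point set in~\cite{HKP14}, yielding $\|S(\Delta)\|_{L_p}\lr_p \sqrt n\,2^{-n}$ with no conditions, which is the first assertion.

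Next I would handle $\widetilde{\cP}_{\bsc}(\vecs)$, where exactly one harmful coefficient survives. As recorded just before the statement, $2^{|\bsj|}\sum_{\bsm}|\tilde{\mu}_{\bsj,\bsm}|^2=2^{-2n-6}(L-(1-2\sigma_1))^2$ at $\bsj=(-1,0)$, while the whole complement $\NN_{-1}^2\setminus\{(-1,0)\}$ contributes only $\lr n\,2^{-2n}$ in the Parseval sense. Since $|\bsj|=0$ and $I_{(-1,0),(0,0)}=[0,1)^2$, the single level $\bsj=(-1,0)$ contributes the constant $|\tilde{\mu}_{(-1,0),(0,0)}|^2=2^{-2n-6}(L-(1-2\sigma_1))^2$ to $S(\Delta)^2$ at every point. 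For necessity I would use $S(\Delta)\ge |\tilde{\mu}_{(-1,0),(0,0)}|=2^{-n-3}|L-(1-2\sigma_1)|$ pointwise, so that $L_p\lr_p \sqrt n\,2^{-n}$ forces $|L-(1-2\sigma_1)|\lr \sqrt n$, i.e. $|L|\lr\sqrt n$. For sufficiency I would split $S(\Delta)^2=|\tilde{\mu}_{(-1,0),(0,0)}|^2+R$ with $R\ge 0$ collecting the other levels, and use $\sqrt{a+b}\le\sqrt a+\sqrt b$ together with Minkowski's inequality to obtain $\|S(\Delta)\|_{L_p}\le |\tilde{\mu}_{(-1,0),(0,0)}|+\|R^{1/2}\|_{L_p}$; the first summand is $\lr \sqrt n\,2^{-n}$ once $|L|\lr\sqrt n$, and $\|R^{1/2}\|_{L_p}\lr_p\sqrt n\,2^{-n}$ by the argument used for $\widetilde{\cP}_{\bsa}(\vecs)$, since the $\bsc$-coefficients away from $(-1,0)$ satisfy the analogous benign bounds.

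The hard part is the square-function bound $\|S(\Delta)\|_{L_p}\lr_p\sqrt n\,2^{-n}$ over the high levels $j_1+j_2\ge n-1$. Here one cannot argue pointwise-uniformly: for fixed $\bst$ the coefficient attached to the box $I_{\bsj,\bsm}$ containing $\bst$ only satisfies $|\mu_{\bsj,\bsm}|\lr 2^{-n-j_1-j_2}$ when that box is actually occupied by a net point, and the delicate step is to control, for almost every $\bst$, how many of the at most $2^n$ occupied boxes at each high level contain $\bst$. This counting is governed by the $(0,n,2)$-net property and is precisely the technical core of the Hammersley estimate in~\cite{HKP14}; I would import it rather than reprove it, checking only that the coefficient bounds for $\cP_{\bsa}(\vecs)$ and $\cP_{\bsc}(\vecs)$ established in Section~\ref{haark} enter the argument unchanged.
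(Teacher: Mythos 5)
Your proposal is correct and follows essentially the same route the paper intends: the paper itself gives no detailed proof of this theorem, but points to the Littlewood--Paley inequality applied to the computed Haar coefficients with the technical square-function estimate imported from~\cite{HKP14}, which is exactly your plan (symmetrization cancels the $n$-growing coefficients at $\bsj=(-1,-1)$ and $(0,-1)$ for $\widetilde{\cP}_{\bsa}(\vecs)$, while for $\widetilde{\cP}_{\bsc}(\vecs)$ the surviving coefficient $|\tilde{\mu}_{(-1,0),(0,0)}|=2^{-n-3}|L-(1-2\sigma_1)|$ governs both directions of the equivalence). The only cosmetic remark is that the ``hard part'' in~\cite{HKP14} is handled by a triangle inequality in $L_{p/2}$ over the levels $\bsj$, exploiting that the occupied boxes at level $\bsj$ cover a set of measure at most $2^{n-j_1-j_2}$, rather than by a pointwise count of occupied boxes containing a given $\bst$; since you import that argument wholesale, this does not affect correctness.
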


\section{Outlook}

It would be increasingly difficult to obtain exact formulas for the $L_2$ discrepancy of more complicated digital nets. However,
we could ask for conditions on the matrices $C_1$ and $C_2$ such that the $L_2$ discrepancy of the digital net generated by these is of optimal order~\eqref{roth}.
Let us, for instance, consider the digital $(0,n,2)$-net $\cP_{tri}$ generated by $C_1=A_1$ as on page~\pageref{matrixa} and
$$ C_2=
\begin{pmatrix}
1 & a_{1,2} & a_{1,3} & \cdots & a_{1,n-2} & a_{1,n-1} & a_{1,n} \\
0 & 1 & a_{2,3} & \cdots &  a_{2,n-2} & a_{2,n-1} & a_{2,n} \\
0 & 0 & 1 & \cdots & a_{3,n-2} & a_{3,n-1} & a_{3,n} \\
\vdots & \vdots & \vdots & \ddots & \vdots & \vdots & \vdots & \\
0 & 0 & 0 & \cdots &  1 & a_{n-2,n-1} & a_{n-2,n} \\
0 & 0 & 0 & \cdots &  0 & 1 & a_{n-1,n} \\
0 & 0 & 0 & \cdots &  0 & 0 & 1 \\
\end{pmatrix}.
$$ 
For the digital nets $\cP_{\bsa}$ we observed that either $\mu_{(-1,-1),(0,0)}(\Delta(\cdot,\cP_{\bsa}))\gr (\log{N})/N$ or $\mu_{(0,-1),(0,0)}(\Delta(\cdot,\cP_{\bsa}))\gr (\log{N})/N$ holds. If we could show that a similar result holds for $\Delta(\cdot,\cP_{tri})$, then we would know that all the nets $\cP_{tri}$ fail to achieve the optimal order of $L_2$ discrepancy as well.
However, this is not the case in general. We define several parameters to demonstrate this claim: For $\mu\in\{1,\dots,n\}$ put $l_{\mu}(\mu):=1$ and for $k\in\{1,\dots,\mu-1\}$ we put
  $$ l_{\mu}(k):=\begin{cases}
	          0, & \text{if\, } \exists i\in \{k+1,\dots,\mu\}: a_{k,i}=1, \\
						1, & \text{if\, } \forall i\in \{k+1,\dots,\mu\}: a_{k,i}=0.
	        \end{cases}$$
					Then a direct computation similar to the proofs of Propositions~\ref{prop1} and~\ref{prop5} yields
					$$  \mu_{(-1,-1),(0,0)}(\Delta(\bst,\cP_{tri}))=\frac{1}{2^{n+3}}\sum_{k=1}^n l_n(k)+\frac{1}{2^{n+1}}+\frac{1}{2^{2n+2}}  $$
					and
					$$  \mu_{(0,-1),(0,0)}(\Delta(\bst,\cP_{tri}))=\frac{1}{2^{n+3}}\left(\sum_{k=1}^{n-1}l_{n-1}(k)-\sum_{k=1}^n l_n(k)\right)+\frac{1}{2^{2n+2}}. $$
Hence we have both $\mu_{(-1,-1),(0,0)}(\Delta(\cdot,\cP_{tri}))\lr 1/N$ and $\mu_{(0,-1),(0,0)}(\Delta(\cdot,\cP_{tri}))\lr 1/N$ if we choose for $C_2$ the
particular matrices
$$ 
\begin{pmatrix}
1 & 0 & 0 & \cdots & 0 & 1 & 1 \\
0 & 1 & 0 & \cdots &  0 & 1 & 1 \\
0 & 0 & 1 & \cdots & 0 & 1 & 1 \\
\vdots & \vdots & \vdots & \ddots & \vdots & \vdots & \vdots & \\
0 & 0 & 0 & \cdots &  1 & 1 & 1 \\
0 & 0 & 0 & \cdots &  0 & 1 & 1 \\
0 & 0 & 0 & \cdots &  0 & 0 & 1 \\
\end{pmatrix} \text{\, or \,}
\begin{pmatrix}
1 & 1 & 0 & \cdots & 0 & 0 & 0 \\
0 & 1 & 1 & \cdots &  0 & 0 &0 \\
0 & 0 & 1 & \cdots & 0 & 0 & 0 \\
\vdots & \vdots & \vdots & \ddots & \vdots & \vdots & \vdots & \\
0 & 0 & 0 & \cdots &  1 & 1 & 0 \\
0 & 0 & 0 & \cdots &  0 & 1 & 1 \\
0 & 0 & 0 & \cdots &  0 & 0 & 1 \\
\end{pmatrix}.
$$ 
We assume that we achieve the lowest possible $L_2$ discrepancy for the net $\cP_{tri}$ if we fill the whole upper right triangle of $C_2$ with ones. We will investigate in future research whether the corresponding digital net achieves the optimal order of $L_2$ discrepancy without shifting or symmetrization and we hope to be able to determine precisely which conditions on the matrix $C_2$ can achieve that.

\noindent{\bf Author's Address:}

\noindent Ralph Kritzinger, Institut f\"{u}r Finanzmathematik und angewandte Zahlentheorie, Johannes Kepler Universit\"{a}t Linz, Altenbergerstra{\ss}e 69, A-4040 Linz, Austria. Email: ralph.kritzinger(at)jku.at
  
\end{document}